\newcommand{\beql}[1]{\begin{equation}\label{#1}}
\newcommand{\eeq}{\end{equation}}
\theoremstyle{plain}
\newtheorem{theorem}{Theorem}[section]
\newtheorem{proposition}[theorem]{Proposition}
\newtheorem{lemma}[theorem]{Lemma}
\newtheorem{corollary}[theorem]{Corollary}
\theoremstyle{definition}
\newtheorem{definition}[theorem]{Definition}
\newtheorem{remark}[theorem]{Remark}
\newtheorem{problem}[theorem]{Problem}
\numberwithin{equation}{section}
\newcounter{mysec}
\newcounter{mysubsec}[mysec]
\definecolor{citecolour}{rgb}{0.0, 0.0, 0.8}
\definecolor{urlcolour}{rgb}{1,0.5,0}
\colorlet{linkcolour}{green!50!black}
\def\G{\mathfrak{G}}
\newenvironment{proofof}{{\bf {Proof.} }}{\hfill $\blacksquare$ \\}
\newenvironment{proofofclaim}{{\bf {Proof of Claim.} }}{\hfill $\spadesuit$ \\}
\renewcommand{\leq}{\leqslant}
\renewcommand{\geq}{\geqslant}
\begin{document}
\title[On the common transversal probability]{On the common transversal probability in finite groups}

\author{S. Aivazidis}
\address{Department of Mathematics \& Applied Mathematics, University of Crete, Greece}
\email{s.aivazidis@uoc.gr}

\author{M. Loukaki}
\address{Department of Mathematics \& Applied Mathematics, University of Crete, Greece}
\email{mloukaki@uoc.gr}

\author{T.\,W. M\"uller}
\address{ Department of Mathematics, University of Vienna, Austria
(formerly of Queen Mary \& Westfield College, London, UK)}
\email{muellet4@univie.ac.at}

\thanks{The first author is partially supported by the Hellenic Foundation for Research and Innovation, Project HFRI-FM17-1733.}

\begin{abstract}
 Let $G$ be a finite group, and let $H$ be a subgroup of $G$. We compute the probability, denoted by $P_G(H)$, that a left transversal of $H$ in $G$ is also a right transversal, thus a two-sided one. Moreover, we define, and denote by $\mathrm{tp}(G)$, the common transversal probability of $G$ to be the minimum, taken over all subgroups $H$ of $G$, of $P_G(H)$. We prove a number of results regarding the invariant $\mathrm{tp}(G)$, like lower and upper bounds, and possible values it can attain. We also show that $\mathrm{tp}(G)$ determines structural properties of $G$. Finally, several open problems are formulated and discussed.
\end{abstract}

\keywords{Finite groups, Two-sided Transversals, Transversal Probability}

\subjclass[2020]{05D15, 05E16, 20D10, 20D60, 20P05}

\maketitle

\tableofcontents

\setlength{\parskip}{0.5em}

\section{Introduction}\label{Sec:Intro}
\noindent We begin with some general remarks and a little bit of history. 
It is often the case that probability theory interacts with group theory. 
The area that we now call \enquote{probabilistic group theory} may reasonably
be said to begin with a series of papers by Erd\H{o}s and Tur\'{a}n---stretching 
from $1965$ to $1972$ and beginning with~\cite{erdos}---where statistical properties 
of the symmetric group are examined in detail.
For survey articles of this area we refer the interested reader
to~\cite{shalev98}, \cite{shalev01}, \cite{pgth}, \cite{pgthcc}, as well as the recent~\cite{mann18}.
Prominent concepts that probabilistic group theory concerns itself with
include the so-called \enquote{probabilistic generation} and the \enquote{commuting probability}
as well as variants of the latter.
The former is about questions of the type:
\begin{quote}
Given a family of groups $\left\{G_n\right\}_n$, what is the probability that a $d$--tuple of elements from $G_n$, chosen uniformly at random, generates $G_n$ as $n \to \infty$?
\end{quote}
Recent work in this direction has mainly focussed on the so-called
$(2,3)$-generation problem for finite simple groups.
The question here is whether all finite simple groups
can be generated by an involution and an element
of order $3$.
\noindent The commuting probability of $G$, often denoted by $\mathrm{cp}(G)$, 
is the probability that two elements of $G$, chosen uniformly at random, commute.
It was popularised by Gustafson~\cite{gustafson} who, in turn, 
traces its origin to Erd\H{o}s and Tur{\'a}n's 
series of papers on the statistics of the symmetric group.
Using the class equation one can prove that 
$\mathrm{cp}(G) = k(G)/|G|$, 
where $k(G)$ is the number of conjugacy classes of $G$. 
Gustafson also established the gap result stating 
that, if $G$ is non-abelian then $\mathrm{cp}(G) \leq 5/8$, 
with equality if, and only if, $G/Z(G) \cong C_2 \times C_2$. 
Of course, $G$ is abelian if, and only if, $\mathrm{cp}(G) = 1$ and, 
despite the $\left(5/8, 1\right)$ gap, the commuting probability may be viewed 
as an arithmetic quantification of the \enquote{abelianness} of a group.
\noindent Certain other variants of the commuting probability have also
received attention. We take this opportunity to mention only a
couple of those: T{\u{a}}rn{\u{a}}uceanu's~\cite{tuarnuauceanu09} 
concept of the \enquote{subgroup permutability degree} as the 
probability that two subgroups of $G$ permute.
It turns out that this probability is an arithmetic measure of how close $G$ is to being an Iwasawa group
or, equivalently, a nilpotent modular group.
A second interesting variant is due to Blackburn, Britnell, and Wildon.
In~\cite{BBW} the authors introduce the probability that two elements, 
chosen independently and at random from a (finite) group $G$, are conjugate
and study many of the fundamental properties that this probability enjoys.

\noindent We now focus our attention on the contents of the paper at hand.
Let $G$ be a finite group, and let $H$ be a subgroup of $G$.    
By Hall's Marriage Theorem (see~\cite{PHall}) there exists a common set 
of representatives for the left and the right cosets 
of $H$ in $G$; such a set is called a double or two-sided transversal of $H$ in $G$. 
Even though the existence of such a double transversal is guaranteed, 
it seems unlikely that (in general) a left transversal of a non-normal subgroup 
will also be a right transversal. 
One of the  main objectives of this paper is to compute the probability 
that a randomly chosen (with the uniform distribution) left transversal of $H$ in $G$ is also a right transversal. 
If the index of $H$ in $G$ is $n$ and we denote by $\mathrm{DT}_G(H)$ the set 
of all double transversals of $H$ in $G$, 
then the quotient 
\[
P_G(H) \coloneqq \frac{\vert \mathrm{DT}_G(H)\vert}{\vert H\vert^n}
\]
is precisely the (Laplacian) probability in question.
Actually, what we compute is the above probability in a more general setting, 
where two subgroups $H$, $K$ of the same index are given, 
and we are seeking left transversals of $H$ that are also right transversals for $K$;  
see Proposition~\ref{Prop:PFormula}.

\noindent In the present article, and in the spirit of $\mathrm{cp}(G)$, 
a novel invariant is introduced and studied: 
the transversal probability $\mathrm{tp}(G)$ of $G$ defined as 
\[
\mathrm{tp}(G) \coloneqq \min_{H\leq G}\, P_G(H).
\]

\noindent Our objective here is two-fold: firstly, to compute $P_G(H)$ and to discuss its various numerical properties. 
In this direction, for example, we prove that if $n = (G:H)$, where $H$ is not normal in $G$,  
then (see Corollary~\ref{Cor:Prod}) 
\[
\frac{(n-1)!}{(n-1)^{n-1}}\,\leq\, P_G(H) \,\leq\, \frac{1}{2}, 
\]
and (see Theorem~\ref{Thm:Main})
\[
\lim_{n \to \infty} P_G(H) = 0;
\]
secondly, to investigate how $\mathrm{tp}(G)$ influences the structure of the group $G$. Results in this direction are for example the following:
\begin{itemize}
\item If  $\mathrm{tp}(G) > (1/2)^{40}$ then either $G$ is soluble 
or has a section isomorphic to $A_5$. 
(Proposition~\ref{Prop:solubilitycriterion})
\vspace{1.5mm}
\item If $\mathrm{tp}(G) > (1/2)^8$ then either $G$ is supersoluble 
or has a section isomorphic to $A_4$. 
(Theorem~\ref{Thm:supersol})
\vspace{1.5mm}
\item If  $\mathrm{tp}(G) > (2/9)^2$  then either $G$ is nilpotent or has a section isomorphic to one of the groups $\{ A_4, D_3, D_5, D_7  \}$.   (Theorem~\ref{Thm:nilp})
\vspace{1.5mm}
\item If  $\mathrm{tp}(G) > 4/81$ and $G$ is non-abelian then it has derived length 2. (Proposition~\ref{Prop:derived})
\end{itemize}

\noindent Furthermore, the results above are all sharp. 
In addition, the upper bound of $1/2$ obtained for $P_G(H)$  
which holds for every non-normal $H \leq G$ 
carries over to $\mathrm{tp}(G)$ for every non-Dedekind group $G$;
that is, $\mathrm{tp}(G) \leq 1/2$ for every such group. 
We are able to fully characterise groups $G$ with $\mathrm{tp}(G) = 1/2$, 
as well as those with $\mathrm{tp}(G) = 1/4$ 
(cf. Corollary~\ref{Cor:tp = 1/2} and Theorem~\ref{thm:tp = 1/4}).

\noindent Finally, while investigating the possible values of $\mathrm{tp}(-)$, 
the following number theoretic result, of independent interest, was obtained (see Theorem~\ref{Thm:prodp_i}):

\begin{itemize}
\item If $\prod_{i = 1}^n\frac{t_i!}{t_i^{t_i}} \,=\, \prod_{i = 1}^k\frac{p_i!}{p_i^{p_i}}$
where the numbers $t_i >1$ are positive integers and the $p_i$ are distinct primes, 
then $k = n$ and, after appropriate rearrangement, $t_i = p_i$ for all $i = 1, \ldots, n$. 
\end{itemize}

\noindent The article is organised as follows. In Section~\ref{Sec:CosetGraph} the coset intersection graph is introduced 
and its properties needed for the  computation of $P_G(-, -)$  are analyzed. 
In Section~\ref{Sec:CTProb} the function $P_G(-,-)$ is introduced and its precise value is calculated. 
A connection with the permanent of a matrix is established and several bounds on its values are given. 
In addition, the behaviour of $P_G(-)$ with respect to subgroups and quotients is determined. 
In Section~\ref{Sec:tp} we work with the function $\mathrm{tp}(G)$, discussing the values it can and cannot attain, 
and the way it interacts with the structure of the group $G$. 
Finally, in Section~\ref{Sec:Problems} some open problems and questions are posed and discussed.

\section{Coset intersection graphs}
\label{Sec:CosetGraph}

\noindent In this section we introduce the concept of a coset intersection graph, 
which is the required background for the computation of $P_G(-,-)$.  
We follow the definition in~\cite{BCL}. 

\begin{definition}
Let $G$ be a group, and let $H, K \leq G$ be subgroups. 
The \emph{coset intersection graph} $\Gamma = \Gamma^G_{H, K}$ 
has vertex set $V(\Gamma) = G/H \sqcup K\backslash G,$ 
two vertices being connected by an (undirected) edge if, and only if, 
the corresponding cosets have non-empty intersection. 
If $H = K,$ we set $\Gamma^G_H \coloneqq \Gamma^G_{H, H}$. 
\end{definition}
\noindent By definition, $\Gamma^G_{H, K}$ is a bipartite graph, split between $\{l_i H\}_{i\in I}$ and $\{Kr_j\}_{j\in J}$, where $\{l_i\}_{i\in I}$ is a left transversal for $H$ in $G$ and $\{r_j\}_{j\in J}$ is a right transversal for $K$ in $G$. In this paper, $G$ will always be a finite group; thus, the coset intersection graphs considered in what follows will all be finite. We also denote  by $K_{a,b}$ the complete bipartite graph on  $a$ and $b$ vertices, for any positive integers $a$ and $b$. 

\begin{lemma}
\label{Lem:CompComplete}
Let $\Delta \leq \Gamma^G_{H, K}$ be a connected component. Then
\vspace{-2mm}
\begin{enumerate}
\item[(i)] $\bigcup_{gH \in V(\Delta)} gH = \bigcup_{Kg'\in V(\Delta)} Kg';$
\vspace{2mm}
\item[(ii)] $\Delta$ is a complete bipartite graph. 
\end{enumerate}
\end{lemma}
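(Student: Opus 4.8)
The plan is to reduce both statements to the structure of $(K,H)$-double cosets in $G$. The crucial observation, from which everything else follows quickly, is a reformulation of the adjacency relation: for $g, g' \in G$, the vertices $gH$ and $Kg'$ are joined by an edge exactly when $g$ and $g'$ lie in a common double coset, i.e. $KgH = Kg'H$. Indeed, $gH \cap Kg' \neq \emptyset$ says that $gh = kg'$ for suitable $h \in H$ and $k \in K$, which is equivalent to $g' \in KgH$; and since the $(K,H)$-double cosets partition $G$, this in turn is equivalent to $Kg'H = KgH$.

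First I would use this to pin down the connected components. Assign to each left-coset vertex $gH$ the double coset $KgH$, and to each right-coset vertex $Kg'$ the double coset $Kg'H$. The adjacency reformulation says precisely that this label is unchanged along every edge, hence is constant on each connected component $\Delta$. Consequently there is a single double coset $D$ associated with $\Delta$, and the vertices of $\Delta$ are exactly the left and right cosets contained in $D$.

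Part (ii) is then immediate. If $gH$ and $Kg'$ are an arbitrary left coset and right coset in $V(\Delta)$, they share the common label $D$, so $KgH = D = Kg'H$; by the reformulation this forces $gH \cap Kg' \neq \emptyset$, i.e. the two vertices are adjacent. As this holds for every pair of vertices on opposite sides of the bipartition, $\Delta$ is complete bipartite.

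For part (i) I would observe that both unions equal $D$. The left cosets occurring as vertices of $\Delta$ are precisely those contained in $D$, and they partition $D$, so their union is $D$; the identical argument for the right cosets again yields $D$, and the two sides therefore coincide. A direct alternative avoids double cosets altogether: any $x \in gH$ with $gH \in V(\Delta)$ also lies in the right coset $Kx$, and $x \in gH \cap Kx$ forces the edge $gH \sim Kx$, whence $Kx \in V(\Delta)$ and $x$ belongs to the right-hand union; the reverse inclusion is symmetric. The only step demanding any care is the adjacency reformulation, and specifically the equivalence $g' \in KgH \Leftrightarrow Kg'H = KgH$, which rests on distinct double cosets being disjoint; the remaining deductions are purely formal.
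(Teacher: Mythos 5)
Your proof is correct, but it follows a genuinely different route from the paper's, most visibly in part (ii): the paper does not prove completeness of the components at all, it simply cites Theorem~3 of the Button--Chiodo--Zeron-Medina Laris paper (\cite{BCL}), whereas you derive it directly from the adjacency reformulation $gH \sim Kg' \iff KgH = Kg'H$. That reformulation is sound (non-empty intersection gives $gh = kg'$, hence $g' \in KgH$, hence equality of double cosets since these partition $G$), and once the double-coset label is seen to be constant on components, completeness of each component is immediate. What your approach buys is self-containedness, and in fact more: the observation that the components are exactly the double cosets is precisely the content of the paper's Lemma~\ref{Lem:sDet}, which the paper proves separately by a back-and-forth argument constructing mutually inverse maps $c$ and $d$; your labeling argument yields that bijection in one stroke. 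For part (i), your ``direct alternative'' ($x \in gH$ lies in $Kx$, and $x \in gH \cap Kx$ forces the edge, so $Kx \in V(\Delta)$) is essentially identical to the paper's argument, while your primary argument via ``both unions equal $D$'' is a clean repackaging of the same fact. The one place to be slightly careful is your assertion that the vertices of $\Delta$ are \emph{exactly} the cosets contained in $D$: the forward containment follows from constancy of the label, but the reverse (every coset in $D$ is a vertex of $\Delta$) needs the completeness observation you make in part (ii), so strictly speaking (ii) should be established before that ``exactly'' is invoked in (i); since you also supply the independent direct argument for (i), nothing is actually at stake.
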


\begin{proofof}
(i) For given $g\in G$ with $gH\in V(\Delta)$, we have 
\[
g H \,\subseteq\, \underset{gH \cap Kg'\neq \emptyset}{\bigcup_{Kg'\in K\backslash G}} Kg' \,\subseteq\, \bigcup_{Kg'\in V(\Delta)} Kg',
\]
so that the left-hand side of (i) is contained in the right-hand side, and a similar argument establishes the reverse inclusion.

\noindent (ii) This is \cite[Thm.~3]{BCL}.
\end{proofof}

\noindent Our next result, which (in somewhat different language and with a less direct proof) goes back to Ore \cite{Ore}, computes the number of connected components of a coset intersection graph. 

\begin{lemma}\label{Lem:sDet}
The number of connected components of the graph $\Gamma^G_{H, K}$ equals the number 
$\vert K\backslash G/H\vert$ of $(K, H)$-double cosets of $G$.
\end{lemma}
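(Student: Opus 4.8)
The plan is to realise the connected components of $\Gamma^G_{H,K}$ as the $(K,H)$-double cosets of $G$ by means of a single adjacency criterion. First I would record when two vertices are joined: for $g,g'\in G$, the left coset $gH$ and the right coset $Kg'$ are adjacent exactly when $gH\cap Kg'\neq\emptyset$, i.e. when $gh=kg'$ for some $h\in H$, $k\in K$. Solving for $g'$ gives $g'=k^{-1}gh\in KgH$ (using $K^{-1}=K$), so adjacency of $gH$ and $Kg'$ is equivalent to the condition $KgH=Kg'H$, that is, to $g$ and $g'$ lying in one and the same $(K,H)$-double coset.

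Next I would exploit that every double coset $D=KgH$ is simultaneously a union of left $H$-cosets, namely $D=\bigcup_{k\in K}kgH$, and a union of right $K$-cosets, namely $D=\bigcup_{h\in H}Kgh$; being nonempty, $D$ contains at least one coset of each kind. Writing $\Delta_D$ for the induced subgraph on the left cosets contained in $D$ together with the right cosets contained in $D$, the adjacency criterion shows that \emph{every} such left coset meets \emph{every} such right coset: if $gH\subseteq D$ and $Kg'\subseteq D$ then $KgH=D=Kg'H$, whence $gH\cap Kg'\neq\emptyset$. Thus $\Delta_D$ is complete bipartite with both parts nonempty, and in particular connected; this is also consistent with Lemma~\ref{Lem:CompComplete}(ii). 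In the opposite direction, the same criterion forbids any edge between a vertex arising from $D$ and a vertex arising from a distinct double coset $D'$, since distinct double cosets are disjoint.

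Finally I would assemble these observations. Because the $(K,H)$-double cosets partition $G$, the subgraphs $\Delta_D$ partition the vertex set $G/H\sqcup K\backslash G$ of $\Gamma^G_{H,K}$; each $\Delta_D$ is connected, and there are no edges running between different $\Delta_D$, so the $\Delta_D$ are precisely the connected components. Counting them yields that the number of components equals $\vert K\backslash G/H\vert$, as claimed. The argument is essentially routine, and its only real crux is the twofold use of the adjacency criterion—that membership in a single double coset \emph{forces} an edge (giving connectivity inside $D$) while membership in distinct double cosets \emph{precludes} one (giving separation of the $\Delta_D$); once these two directions are in place, the partition of $G$ by double cosets does the rest.
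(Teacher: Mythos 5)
Your proof is correct. It reaches the same bijection between components and double cosets as the paper, but it is organised around a different pivot: you first prove the sharp adjacency criterion ($gH$ and $Kg'$ are joined if and only if $KgH=Kg'H$), and then read off the component structure from the fact that the double cosets partition $G$, each inducing a nonempty complete bipartite subgraph with no edges running between distinct double cosets. The paper instead builds two explicit maps $c$ and $d$ between $\mathcal{C}(\Gamma)$ and $K\backslash G/H$ and checks they are mutually inverse; crucially, its forward map $c$ leans on Lemma~\ref{Lem:CompComplete}(ii) (that components are complete bipartite, cited from~\cite{BCL}) to conclude that any left vertex and any right vertex of a common component actually meet. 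Your argument does not need that input --- the adjacency criterion alone forces each $\Delta_D$ to be complete bipartite, so you obtain Lemma~\ref{Lem:CompComplete}(ii) as a by-product rather than consuming it as a hypothesis. What the paper's version buys is a very explicit pair of inverse maps; what yours buys is self-containedness and a one-line explanation of \emph{why} the components are complete bipartite in the first place.
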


\begin{proofof}
Denote by $\mathcal{C}(\Gamma)$ the collection of all connected components of $\Gamma = \Gamma^G_{H, K}$. If $aH, Kb\in V(\Delta)$ for some connected component $\Delta$ of $\Gamma$ then, by Part~(ii) of Lemma~\ref{Lem:CompComplete}, there exist elements $h\in H$ and $k\in K$, such that $ah = kb$, or $b = k^{-1}ah$. Hence, $KbH = KaH$. Keeping $aH\in V(\Delta)$ fixed while running through all right vertices $Kb$ of $\Delta$, we thus see that
\[
KbH = KaH,\quad (Kb \in V(\Delta)).
\]
A symmetric argument, this time keeping a right vertex $Kb\in V(\Delta)$ fixed, and running through the left vertices $aH$ of $\Delta$ shows that
\[
KaH = KbH,\quad (aH \in V(\Delta)).
\]
It follows that the whole component $\Delta$ (meaning every vertex of $\Delta$) is contained in one and the same $(K, H)$-double coset $Kg_\Delta H$, and sending $\Delta$ to $K g_\Delta H$ yields a well-defined map 
\[
c: \mathcal{C}(\Gamma^G_{H, K})\,\longrightarrow\, K\backslash G / H,\quad c(\Delta) = KgH,\,\,(gH \in V(\Delta)).
\]
Conversely, let $KgH \in K\backslash G/H$ be a given $(K, H)$-double coset. Then each left $H$-coset $kgH$ and every right $K$-coset $Kgh$ contained in $KgH$ intersect non-trivially, as $kgh \in kgH \cap Kgh$. An argument analogous to the one given above now shows that all left $H$-cosets and all right $K$-cosets contained in $KgH$ lie in one and the same connected component $\Delta_g$ of $\Gamma$, and sending $KgH$ to $\Delta_g$ gives a well-defined map 
\[
d: K\backslash G/H\,\longrightarrow\, \mathcal{C}(\Gamma^G_{H, K}),\quad d(KgH) = \Delta,\,\,(gH\in V(\Delta)).
\]
The fact that $d(c(\Delta)) = \Delta$ for $\Delta\in\mathcal{C}(\Gamma)$ is now obvious (pin down $\Delta$ by means of a vertex $gH\in V(\Delta)$) and, similarly, we find that $c(d(KgH)) = KgH$, finishing the proof.
\end{proofof}

\noindent With every edge of a coset intersection graph we associate a weight in the following natural way.

\begin{definition}
\label{Def:EdgeWeight}
Given a coset intersection graph $\Gamma = \Gamma^G_{H, K},$ we associate to an edge $e = l_i H - K r_j$ in $\Gamma$ the \emph{weight} 
$w(e) \coloneqq \, \vert l_iH \cap Kr_j\vert$. 
\end{definition}

\noindent If $g\in G$ and if $\Delta\leq \Gamma$ is a connected component of the coset intersection graph $\Gamma = \Gamma^G_{H, K}$, we shall write $g\in\Delta$ to mean $gH \in  V(\Delta)$. Observe that,  as $g \in gH \cap Kg$,
we clearly have $gH \in V(\Delta)$ if and only if $Kg \in V(\Delta)$. 

\begin{lemma}
\label{Lem:EqualWeights}
Let $\Delta \leq \Gamma^G_{H, K}$ be a connected component. Then all edges of $\Delta$ carry the same weight $w,$ and we have
\[
w = \vert gHg^{-1} \cap K\vert = \vert H \cap g^{-1} K g\vert,\quad (g\in \Delta).
\]
\end{lemma}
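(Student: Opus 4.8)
The plan is to prove the two assertions in turn. First I would show that the weight of an edge of $\Delta$ is determined by its left endpoint alone; a mirror-image argument shows it is equally determined by its right endpoint alone, and feeding both facts into the complete-bipartite structure of $\Delta$ (Lemma~\ref{Lem:CompComplete}(ii)) will force every edge to carry a single common weight $w$. It then remains only to read off $w$ from a convenient edge.

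First I would fix a left vertex $aH \in V(\Delta)$ and compute $\vert aH \cap Kb\vert$ for an arbitrary incident edge. Using the bijection $H \to aH$, $h \mapsto ah$, an element $ah$ lies in $Kb$ exactly when $Kah = Kb$, and two elements $ah, ah'$ lie in the same right coset precisely when $hh'^{-1} \in a^{-1}Ka$; since $h,h' \in H$ this means $hh'^{-1} \in H \cap a^{-1}Ka$. Thus the non-empty fibres of the map $h \mapsto Kah$ are the right cosets of $H \cap a^{-1}Ka$ in $H$, each of size $\vert H \cap a^{-1}Ka\vert$. By Lemma~\ref{Lem:CompComplete}(i) every right coset meeting $aH$ is already a right vertex of $\Delta$, so each such fibre is exactly an intersection $aH \cap Kb$ with $Kb \in V(\Delta)$, giving
\[
\vert aH \cap Kb\vert \,=\, \vert H \cap a^{-1}Ka\vert
\]
for every edge $aH - Kb$ of $\Delta$; in particular the weight does not depend on the right endpoint.

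By the evident left--right symmetry of the configuration, the same counting applied to a fixed right vertex $Kb$ yields $\vert aH \cap Kb\vert = \vert K \cap bHb^{-1}\vert$, independent of the left endpoint. Now $\Delta$ is complete bipartite and non-empty (whenever $gH \in V(\Delta)$ one also has $Kg \in V(\Delta)$, and $g \in gH \cap Kg$), so every left vertex is joined to every right vertex. Fixing a single right vertex $Kb_0$, the first formula gives $\vert H \cap a^{-1}Ka\vert = \vert aH \cap Kb_0\vert = \vert K \cap b_0Hb_0^{-1}\vert$ for every left vertex $aH$, and the right-hand side is independent of $a$. Hence all the numbers $\vert H \cap a^{-1}Ka\vert$ coincide, and every edge of $\Delta$ carries the same weight $w$.

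Finally I would identify $w$. Taking $g \in \Delta$, so that $gH$ is a left vertex and $Kg$ a right vertex with $g \in gH \cap Kg$, the displayed formula applied to the edge $gH - Kg$ gives $w = \vert H \cap g^{-1}Kg\vert$; and since conjugation by $g$ restricts to a bijection $H \cap g^{-1}Kg \to gHg^{-1} \cap K$, we also get $w = \vert gHg^{-1} \cap K\vert$, which is the stated equality. The one substantive step is the coset-counting of the second paragraph, establishing that the weight is constant along all edges leaving a fixed left vertex; the main thing to watch is that the fibres must be identified with genuine edges of $\Delta$ (this is where Lemma~\ref{Lem:CompComplete}(i) is needed, to guarantee that the right cosets meeting $aH$ are precisely the right vertices of the component). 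Once this uniformity and its mirror image are available, the complete-bipartite structure finishes the argument with no further effort.
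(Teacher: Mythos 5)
Your proof is correct and follows essentially the same route as the paper's: show that the weight of an edge is determined by its left endpoint alone, by symmetry by its right endpoint alone, and then use the complete-bipartite structure of the component to glue these two facts into global constancy before evaluating $w$ on the edge $gH - Kg$. The only (immaterial) difference is that you obtain $\vert aH \cap Kb\vert = \vert H \cap a^{-1}Ka\vert$ by a direct fibre count of the map $h \mapsto Kah$, whereas the paper picks an element $x \in aH \cap Kc$, writes $x = gh'$ with $h' \in H$, and conjugates to see that $\vert xHx^{-1}\cap K\vert = \vert gHg^{-1}\cap K\vert$.
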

\begin{proofof}
Let $a H, K b\in V(\Delta)$ and  $g\in  aH \cap Kb$. Then $aH = gH$ and $Kb = Kg$, and thus
\[
 w_{a, b} \coloneqq w(aH - Kb) 
 = \vert aH \cap Kb\vert 
 = \vert gH \cap Kg\vert
 = \vert gH g^{-1} \cap K\vert. 
 \]
Keeping $aH$ fixed, consider an arbitrary right coset $Kc\in V(\Delta)$, and set $w_{a, c} \coloneqq w(aH - Kc)$. Then, by the previous computation, $w_{a, c} = \vert xHx^{-1} \cap \nolinebreak K\vert$ for any $x\in aH \cap Kc$. Since $x \in aH = gH$, we have $x = gh'$ for some $h'\in H$, so that 
\[
w_{a, c} = \vert g h' H (h')^{-1} g^{-1} \cap K\vert = \vert gHg^{-1} \cap K\vert = w_{a, b}.
\]
Next, consider an arbitrary left coset $dH\in V(\Delta)$, noting that, at this stage, $dH - Kc$ is an arbitrary edge in $\Delta$. Changing the roles of $H$ and $K$ in the previous  argument we also get $w_{a, c} = w_{d, c} $. Hence  
\[
w_{d, c} =w_{a, c} = w_{a, b} = w,
\]
and our result follows.
\end{proofof}

\begin{definition}
\label{Def:CompWeight}
If $\Delta \leq \Gamma^G_{H, K}$ is a connected component, then we call the common weight of the edges in $\Delta$ the \emph{weight} of $\Delta,$ denoted by $w(\Delta)$. 
\end{definition}

\begin{lemma}
\label{Lem:HKWeight}
\begin{enumerate}
\item[(a)] If $\Delta_\sigma \cong K_{s_\sigma,\, t_\sigma}$ is a connected component of $\Gamma^G_{H, K},$ then $\vert H\vert = w_\sigma t_\sigma$ and $\vert K\vert = w_\sigma s_\sigma,$ where $w_\sigma = w(\Delta_\sigma)$ is the weight of $\Delta_\sigma$. In particular, $s_\sigma/t_\sigma = \vert K\vert/\vert H\vert,$ and we have $s_\sigma = t_\sigma$ provided that $\vert H\vert = \vert K\vert$. In the latter case, the number $m$ of components of type $K_{1, 1}$ is given by
\begin{equation}
\label{Eq:mForm}
m = \begin{cases} (N_G(H) : H),&\mbox{if $H$ is conjugate to $K$ in $G,$}\\
0, & \mbox{otherwise,}\end{cases}
\end{equation}
and the weight of such a component equals $|H|$.
\vspace{2mm}
\item[(b)] If $G = \bigsqcup_{\sigma = 1}^s K g_\sigma H,$ then $t_\sigma = (H:H\cap K^{g_\sigma}),$ where $\Delta_\sigma \cong K_{t_\sigma, t_\sigma}$ is the connected component of $\Gamma^G_{H, K}$ containing $g_\sigma$. 
\end{enumerate} 
\end{lemma}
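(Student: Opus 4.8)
My plan is to read off both parts directly from the complete bipartite structure of the components (Lemma~\ref{Lem:CompComplete}) together with the weight computation (Lemma~\ref{Lem:EqualWeights}), so that almost everything reduces to counting the blocks of a single coset.

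For part~(a) I would fix a left vertex $gH \in V(\Delta_\sigma)$ and note that in $\Gamma^G_{H,K}$ the coset $gH$ is adjacent precisely to those right $K$-cosets it meets, all of which necessarily lie in $\Delta_\sigma$; since $\Delta_\sigma \cong K_{s_\sigma,t_\sigma}$ is complete bipartite, $gH$ must meet every one of its $t_\sigma$ right vertices. By Lemma~\ref{Lem:CompComplete}(i) the coset $gH$ is contained in the union of these right cosets, so
\[
gH \,=\, \bigsqcup_{Kg' \in V(\Delta_\sigma)} (gH \cap Kg'),
\]
a disjoint union of $t_\sigma$ blocks, each of size $w_\sigma$ by Lemma~\ref{Lem:EqualWeights}. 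This gives $|H| = w_\sigma t_\sigma$; the symmetric argument applied to a right vertex yields $|K| = w_\sigma s_\sigma$, whence $s_\sigma/t_\sigma = |K|/|H|$ and $s_\sigma = t_\sigma$ as soon as $|H| = |K|$.

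To count the $K_{1,1}$ components (now with $|H| = |K|$), I would first observe that $\Delta_\sigma \cong K_{1,1}$ forces $w_\sigma = |H|$; since $g \in gH \cap Kg$ always holds, the unique right vertex must be $Kg$ with $gH \cap Kg = gH$, and equal cardinalities give $gH = Kg$, i.e.\ $gHg^{-1} = K$. Hence such a component can exist only when $H$ and $K$ are conjugate, forcing $m = 0$ otherwise. When $K = xHx^{-1}$ for a fixed $x$, the equation $gHg^{-1} = K$ is equivalent to $x^{-1}g \in N_G(H)$, i.e.\ $g \in x\,N_G(H)$; writing $g = xn$ with $n \in N_G(H)$, two such elements determine the same left coset exactly when $n_1 H = n_2 H$ in $N_G(H)/H$. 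The $K_{1,1}$ components are thus in bijection with $N_G(H)/H$, giving $m = (N_G(H):H)$.

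Part~(b) should then be immediate: given $G = \bigsqcup_{\sigma=1}^s Kg_\sigma H$, Lemma~\ref{Lem:sDet} identifies $\Delta_\sigma$ as the component containing $g_\sigma$, and Lemma~\ref{Lem:EqualWeights} gives $w_\sigma = |H \cap g_\sigma^{-1}Kg_\sigma| = |H \cap K^{g_\sigma}|$; substituting into $|H| = w_\sigma t_\sigma$ from part~(a) yields $t_\sigma = (H : H \cap K^{g_\sigma})$. The one step I expect to require genuine care is the $K_{1,1}$ count---specifically, confirming that the defining incidence condition is exactly $gHg^{-1} = K$ (rather than some weaker condition) and that passing from the solution set $x\,N_G(H)$ to distinct left cosets contributes precisely the index $(N_G(H):H)$; the remaining arguments are routine bookkeeping on the bipartite blocks.
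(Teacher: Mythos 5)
Your proposal is correct. For part~(a) the argument is essentially the paper's: decompose a fixed left coset $gH$ into its intersections with the $t_\sigma$ right cosets of the component (using Lemma~\ref{Lem:CompComplete}(i) for containment and Lemma~\ref{Lem:EqualWeights} for the common block size $w_\sigma$), yielding $|H| = w_\sigma t_\sigma$, and symmetrically $|K| = w_\sigma s_\sigma$. For the $K_{1,1}$ count you parametrise the trivial components by the left $H$-cosets inside $x\,N_G(H)$ where $K = xHx^{-1}$, while the paper writes $m = \bigl|\{g \in G : H^g = K\}\bigr| / |H|$; these are the same count viewed from two angles, and both correctly reduce the existence of such components to $H$ and $K$ being conjugate. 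The one genuine divergence is part~(b): the paper simply cites \cite[Prop.~6]{BCL}, whereas you derive $t_\sigma = (H : H\cap K^{g_\sigma})$ in two lines from $w_\sigma = |H \cap g_\sigma^{-1}Kg_\sigma|$ (Lemma~\ref{Lem:EqualWeights}) and $|H| = w_\sigma t_\sigma$ from part~(a). Your version buys self-containedness at essentially no cost, and your careful verification that $gH = Kg$ is exactly the incidence condition for a $K_{1,1}$ component (via $g \in gH \cap Kg$ plus equal cardinalities) is precisely the point that needs checking.
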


\begin{proof}
(a) Let $\Delta_\sigma \cong K_{s_\sigma,\,t_\sigma}$ be a connected component of the coset intersection graph $\Gamma^G_{H,K}$ of weight $w(\Delta_\sigma) = w_\sigma$. By Lemma~\ref{Lem:EqualWeights}, each given left coset $L_i$ of $\Delta_\sigma$ intersects every right coset $R_j\in V(\Delta_\sigma)$ in exactly $w_\sigma$ elements and, as the right cosets of $\Delta_\sigma$ are pairwise disjoint, we have 
\[
\left\lvert L_i \,\cap \bigcup_{R_j\in V(\Delta_\sigma)} R_j\right\rvert = w_\sigma t_\sigma,\quad (L_i\in V(\Delta_\sigma)).
\]
However, $L_i \subseteq \bigcup_{R_j\in V(\Delta)} R_j$ by Lemma~\ref{Lem:CompComplete}(i), so that in fact 
\[
L_i \,\cap \bigcup_{R_j\in V(\Delta)} R_j = L_i, 
\]
and we find that 
\[
\vert H\vert = \vert L_i\vert = w_\sigma t_\sigma,
\] 
as claimed. A symmetric argument yields that $\vert K\vert = w_\sigma s_\sigma$. As concerns the assertion about the number $m$ of trivial components, we note that $1= t_{\sigma}= s_{\sigma} $ precisely when $w_\sigma = \vert H \vert = \vert K \vert $, which in turn implies (by Lemma~\ref{Lem:EqualWeights}) that $K=  gHg^{-1} \cap K $ and $ H = H \cap g^{-1} K g$ for $g \in \Delta_\sigma$. Hence  $H, K$  are congugate  and  
\[
m = \big\vert\big\{g\in G:\, H^g = K\big\}\big\vert \big/ \big\vert H\big\vert,
\]
from which the given formula follows. The remaining assertions are now clear.

\noindent (b) This is a special case of \cite[Prop.~6]{BCL}.
\end{proof}

\noindent We record upper and lower bounds for the number $s = \vert K\backslash G/H\vert$ of connected components of a symmetric coset intersection graph $\Gamma^G_{H, K}$.

\begin{lemma}
\label{Lem:sBounds}
Let $H, K\leq G$ be such that $(G:H) = n = (G:K),$ and let $s = \vert K\backslash G/H\vert$. Then we have 
\begin{equation}
\label{Eq:sEst}
\frac{n-m}{\vert H\vert}\,+\, m\, \leq\,s \leq \frac{n-m}{p} \,+\, m,
\end{equation}
where $p$ is the smallest prime divisor of $\vert H\vert,$ and $m$ is given by {\em \eqref{Eq:mForm}}. 
\end{lemma}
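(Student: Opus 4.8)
The plan is to reduce the two estimates to a single counting identity for the component sizes of $\Gamma^G_{H,K}$, together with elementary divisibility bounds. First I would record that, since $G$ is finite and $(G:H) = n = (G:K)$, we have $|H| = |G|/n = |K|$; hence Lemma~\ref{Lem:HKWeight}(a) applies and every connected component $\Delta_\sigma$ of $\Gamma^G_{H,K}$ is a \emph{balanced} complete bipartite graph $K_{t_\sigma, t_\sigma}$, with $|H| = w_\sigma t_\sigma$. In particular each $t_\sigma$ is a divisor of $|H|$, so $1 \leq t_\sigma \leq |H|$.

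The heart of the argument is the identity
\[
\sum_{\sigma = 1}^s t_\sigma = n.
\]
This holds because the left vertices of $\Gamma^G_{H,K}$ are exactly the $n$ left cosets of $H$ in $G$, and by Lemma~\ref{Lem:sDet} these are partitioned among the $s = |K\backslash G/H|$ connected components, the component $\Delta_\sigma$ accounting for precisely its $t_\sigma$ left cosets. I would then split this sum according to whether $t_\sigma = 1$ or $t_\sigma > 1$. By formula~\eqref{Eq:mForm} there are exactly $m$ components of type $K_{1,1}$, i.e.\ with $t_\sigma = 1$, and hence $s - m$ components with $t_\sigma > 1$; for each of the latter, $t_\sigma$ is a divisor of $|H|$ strictly greater than $1$, whence $p \leq t_\sigma \leq |H|$, where $p$ is the smallest prime divisor of $|H|$.

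The identity now reads $n - m = \sum_{t_\sigma > 1} t_\sigma$, a sum of $s - m$ terms, each lying in the interval $[p, |H|]$. Bounding this sum below by $(s-m)p$ and above by $(s-m)|H|$ yields
\[
(s - m)\,p \,\leq\, n - m \,\leq\, (s - m)\,|H|,
\]
and solving for $s$ produces precisely the inequalities in~\eqref{Eq:sEst}. The argument is essentially routine; the only steps requiring any care are the identification of the number of trivial components with $m$ (which is exactly the content of~\eqref{Eq:mForm}) and the remark that a divisor of $|H|$ exceeding $1$ is automatically at least $p$. I anticipate no genuine obstacle beyond keeping this bookkeeping straight.
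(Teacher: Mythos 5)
Your proof is correct and follows essentially the same route as the paper: the paper sums the double coset sizes $\vert Kg_\sigma H\vert = \vert H\vert\,(H:H\cap K^{g_\sigma})$ to get $\vert G\vert$ and divides by $\vert H\vert$, which is exactly your identity $\sum_\sigma t_\sigma = n$, after which both arguments bound the $s-m$ non-trivial terms between $p$ and $\vert H\vert$. The only cosmetic difference is that you phrase the counting in terms of left vertices of the components of $\Gamma^G_{H,K}$ rather than double coset orders, and you justify $t_\sigma \geq p$ via $t_\sigma \mid \vert H\vert$ from Lemma~\ref{Lem:HKWeight}(a) instead of via the index $(H:H\cap K^{g_\sigma})$; these are equivalent.
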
 

\begin{proofof}
Let 
\[
G = \bigsqcup_{\sigma = 1}^s \,K g_\sigma H, 
\]
where $g_\sigma \in\Delta_{\sigma}$ for $1\leq \sigma \leq s$. We observe that 
\[
\vert K g_\sigma H\vert = \frac{\vert H\vert\cdot \vert K\vert}{\vert H \,\cap\, K^{g_\sigma}\vert} = \frac{\vert H\vert^2}{\vert H \,\cap\, K^{g_\sigma}\vert},
\]
and that 
\[
\vert H\vert \,\leq \, \frac{\vert H\vert^2}{\vert H \,\cap\, K^{g_\sigma}\vert} \leq \vert H\vert ^2. 
\]
Here, the left-hand side is assumed if, and only if, $g_\sigma\in\{g\in G: K^g = H\}$, while the right-hand side is assumed if, and only if, $H \cap K^{g_\sigma} = 1$. It follows that 
\[
n|H| = |G| = \sum_{\sigma = 1}^{s} |K g_\sigma H| = m|H| \,+\, 
\sum_{\sigma = m+1}^{s} |K g_\sigma H| \leq m|H| \,+\, (s-m)|H|^2.
\]
Dividing both sides by $|H|$ yields $n \leq m +(s-m)|H|$
or equivalently 
\[
\frac{n-m}{|H|} \,+\, m \leq s.
\]
This proves the first half of the assertion. 
The upper bound for $s$ is obtained in a similar way:
\begin{align*}
n|H| = |G| & = \, \sum_{\sigma = 1}^{s} |K g_\sigma H| \\
& = m|H| \,+\, 
\sum_{\sigma = m+1}^{s} |H|\cdot (H : H \cap K^{g_\sigma}) \\
&\geq\, m|H| \,+\, (s-m)p|H|.
\end{align*}
Dividing both sides by $|H|$ yields $n \geq m +(s-m)p$,
or equivalently 
\[
\frac{n-m}{p} \,+\, m \geq s,
\]
completing the proof.
\end{proofof}

\noindent With the help of Lemma~\ref{Lem:sBounds}, we can characterise groups $G$ having a non-normal subgroup $H$ satisfying $\vert H\backslash G/H\vert = 2$, as the next proposition shows. 

\begin{proposition}
Let $G$ be a finite group and let $H \leq G$ be non-normal
and of index $n$ in $G$. 
If $s = \vert H\backslash G/H\vert = 2$ and $|H| = n - 1$, then G is a Frobenius group
and $H$ is a Frobenius complement.
Conversely, suppose that G is a Frobenius group
and that $H$ is a Frobenius complement.
Then $s = 2$ if, and only if, $|H| = n-1$.
\end{proposition}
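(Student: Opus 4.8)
The plan is to reduce both implications to the elementary identity $|HgH| = |H|^2/|H \cap H^g|$ for the sizes of the $(H,H)$-double cosets, which is recorded in the proof of Lemma~\ref{Lem:sBounds}, together with the fact (Lemma~\ref{Lem:EqualWeights}) that $|H \cap H^g|$ depends only on the double coset $HgH$ and not on the chosen representative $g$.

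For the forward direction, I would argue as follows. Assume $s = |H\backslash G/H| = 2$ and $|H| = n-1$, so that $|G| = n|H| = n(n-1)$. One of the two double cosets is the identity double coset $H1H = H$, of size $|H| = n-1$; since there are only two double cosets, the remaining one is $G \setminus H$, of size $|G| - |H| = (n-1)^2$. Picking any $g \in G \setminus H$, the double coset size formula gives $(n-1)^2 = |HgH| = |H|^2/|H\cap H^g| = (n-1)^2/|H \cap H^g|$, whence $|H \cap H^g| = 1$. As every element of $G \setminus H$ lies in this same (unique non-trivial) double coset, we conclude that $H \cap H^g = 1$ for all $g \in G\setminus H$. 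Since $H$ is non-normal it is proper and non-trivial, and the condition $H \cap H^g = 1$ for all $g \in G \setminus H$ is exactly the defining property of a Frobenius complement; hence $G$ is a Frobenius group with complement $H$.

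For the converse, suppose $G$ is a Frobenius group with complement $H$, so that $H \cap H^g = 1$ for every $g \in G \setminus H$ and, in particular, $N_G(H) = H$ (whence $m = (N_G(H):H) = 1$). The identity double coset $H$ is then the unique trivial one, of size $|H|$, while every other double coset $HgH$ with $g \notin H$ has size $|H|^2/|H \cap H^g| = |H|^2$. Summing over the $s$ double cosets yields $|G| = |H| + (s-1)|H|^2$; dividing by $|H|$ and using $|G| = n|H|$ gives $n = 1 + (s-1)|H|$, that is,
\[
s = 1 + \frac{n-1}{|H|}.
\]
Consequently $s = 2$ holds if, and only if, $(n-1)/|H| = 1$, i.e. $|H| = n-1$, as claimed.

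The only point requiring care is the passage, in the forward direction, from a single representative $g$ to all of $G \setminus H$: one must know that $|H \cap H^g|$ is constant on the non-trivial double coset. This is precisely the content of Lemma~\ref{Lem:EqualWeights} (the weight of a connected component, equivalently of the corresponding double coset, is well defined), so no extra work is needed. Everything else is bookkeeping with coset sizes, and the small degenerate cases (which would force $H = 1$ or $H = G$) are excluded automatically by the hypothesis that $H$ is non-normal.
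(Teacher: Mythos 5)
Your proof is correct and follows essentially the same route as the paper: both directions rest on the double coset size formula $|HgH| = |H|^2/|H \cap H^g|$ and the decomposition $G = H \sqcup HgH$, exactly as in the paper's argument (which additionally invokes Lemma~\ref{Lem:sBounds} to note $N_G(H)=H$, a step your version renders unnecessary by observing directly that the non-trivial double coset is $G\setminus H$). Your closing worry about constancy of $|H\cap H^g|$ is in fact moot, since for each individual $g\notin H$ one already has $HgH = G\setminus H$ and the computation applies to that $g$ directly.
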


\begin{proofof}
By Lemma~\ref{Lem:sBounds} with $H = K$, we have
\[
2 \geq \frac{n-m}{|H|} \,+\, m, 
\]
which implies that $m = 1$, since $H$ is not normal in $G$ (i.e., $m < n$); thus, $N_G(H) = H$.
Now let $g \in G - H$. 
Since $s = 2$, we have $G = H \sqcup HgH$, and therefore 
\[
|G| = |H| \,+\, \frac{|H|^2}{|H \cap H^g|}.
\]
Consequently, for $s = 2$ and $\vert H\vert = n-1$,
\[
n = 1 \,+\, \frac{|H|}{|H \cap H^g|} = 1 \,+\, \frac{n-1}{|H \cap H^g|}.
\]
It follows that $|H \cap H^g| = 1$, thus $H \cap H^g = 1$.
Since $g$ was arbitrary subject to lying outside $H$,
we deduce that $G$ is a Frobenius group with Frobenius complement $H$.

\noindent Suppose now that $G$ is a Frobenius group, and that $H$ is a Frobenius complement. Then we have $s = 2$ if, and only if, $G = H \sqcup HgH$ for every $g \in G - H$
or, equivalently, if, and only if,
\[
|G| = |H| + |H|^2; 
\]
that is, if, and only if, $|H| = n - 1$, which is as desired. 
\end{proofof}

\section{The common transversal probability $P_G(-,-)$ of given subgroups.} \label{Sec:CTProb}


\noindent
Let $G$ be a finite group, and let $H\leq G$ be a subgroup. As we have noted before,  a given left transversal for $H$ in $G$ may or may not be a two-sided one. The aim of this section is to compute the probability $P_G(H)$ that this happens. We actually give a generalized definition of the  above probability, where  two groups $H,K$ of the same index $n$ are concerned,  and  compute its value. This way we are able to show that the bigger the index $n$ is, the smaller the probability of a left transversal of $H$ to be a right transversal of $K$ is (see Theorem~\ref{Thm:Main} below). In addition, we give bounds for the values of $P_G(H, K)$ and  associate $P_G(H, K)$ with a permanent of a doubly stochastic matrix.  Finally in the last  subsection we show that $P_G(-)$ behaves well with respect to subgroups and homomorphic images.

\subsection{Definition and computation of  $P_G(-,-)$.}
We start with a definition of the main actors of this paper.
\begin{definition}
\label{Def:CommProb}
Let $G$ be a finite group, and let $H, K\leq G$ be subgroups such that $(G:H) = n = (G:K)$.
\vspace{-2mm}
\begin{enumerate}
\item[(i)] We denote by $\mathrm{DT}_G(H, K)$ the set of all left transversals for $H$ in $G$, which are also right transversals for $K$ in $G$. 
\vspace{1.5mm}
\item[(ii)] We let 
\[
P_G(H,K) \coloneqq \frac{\vert \mathrm{DT}_G(H, K)\vert}{\vert H\vert^n},
\]
so that $P_G(H, K)$ is the (Laplacian) probability that a left transversal for $H$ in $G$ is also a right transversal for $K$ in $G$. If $H = K$, we let $P_G(H) \coloneqq P_G(H, H)$ be the probability that a left transversal for $H$ in $G$ is a two-sided one. We call $P_G(H, K)$ the \emph{common transversal probability} for the subgroups $H, K\leq G$. 
\vspace{1.5mm}
\item[(iii)] We set
\[
\mathrm{tp}(G) \coloneqq \min_{H\leq G}\, P_G(H),
\]
which is an invariant of $G$ alone, termed the \emph{common transversal probability} of the group $G$.
\end{enumerate}
\end{definition}

\noindent The first result of this section computes the probability $P_G(H, K)$ in terms of the sizes of the connected components of the coset intersection graph $\Gamma^G_{H, K}$. 

\begin{proposition}
\label{Prop:PFormula}
Let $G$ be a finite group, and let $H, K\leq G$ be subgroups such that 
\[
(G:H) = (G:K) = n.
\] 
Assume further that $\vert K\backslash G/H\vert = s$ and let  $\Delta_\sigma = K_{t_\sigma, t_\sigma}$ for $\sigma \in [s]$ denote  the connected components of   $\Gamma = \Gamma^G_{H, K}$. 
Then
\begin{equation}
\label{Eq:PFormula}
P_G(H, K) = \prod_{\sigma = 1}^s \frac{t_\sigma!}{t_\sigma^{t_\sigma}},
\end{equation}
where $t_\sigma \,\big\vert\, \vert H\vert$ and $\sum_{\sigma = 1}^s t_\sigma = n$.
\end{proposition}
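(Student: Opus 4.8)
The plan is to count $\lvert\mathrm{DT}_G(H,K)\rvert$ directly and then divide by $\vert H\vert^n$. A left transversal for $H$ is exactly a choice of one representative $x_i$ from each of the $n$ left cosets $L_i = l_iH$, so there are precisely $\vert H\vert^n$ left transversals, matching the denominator. Because there are likewise exactly $n$ right cosets of $K$, such a transversal $\{x_1,\dots,x_n\}$ is \emph{also} a right transversal for $K$ if, and only if, the $n$ chosen elements lie in pairwise distinct right cosets; equivalently, the assignment sending each left coset to the unique right coset containing its chosen representative is a bijection. So the first step is to recast $\mathrm{DT}_G(H,K)$ as the collection of these ``distinct-right-coset'' systems of representatives.

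The second, and decisive, step is to localise this condition to the connected components of $\Gamma = \Gamma^G_{H,K}$. If $x_i \in L_i$ lies in the right coset $R_j$, then $L_i \cap R_j \neq \emptyset$, so $L_i$ and $R_j$ are adjacent and hence belong to the same component $\Delta_\sigma$; thus the representative chosen inside a left coset of $\Delta_\sigma$ always falls into a right coset of $\Delta_\sigma$. Since $\vert H\vert = \vert K\vert$, each $\Delta_\sigma \cong K_{t_\sigma,t_\sigma}$ carries equally many left and right cosets (namely $t_\sigma$), and the components partition both the $n$ left cosets and the $n$ right cosets compatibly. Consequently the global bijection condition holds if, and only if, the induced assignment restricts to a bijection within every single component. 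I expect this to be the point requiring the most care: one must argue that a globally injective, component-preserving map between two equinumerous families that are partitioned in the same way is forced to be bijective on each block, and here the equality $s_\sigma = t_\sigma$ furnished by Lemma~\ref{Lem:HKWeight}(a) is exactly what upgrades injectivity to bijectivity block by block.

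Granting the localisation, the count factorises over components. Fix $\Delta_\sigma \cong K_{t_\sigma,t_\sigma}$ with left cosets $L_1,\dots,L_{t_\sigma}$, right cosets $R_1,\dots,R_{t_\sigma}$, and common edge weight $w_\sigma$, so that $\vert L_i \cap R_j\vert = w_\sigma$ for all $i,j$ by Lemma~\ref{Lem:EqualWeights}. A valid selection inside $\Delta_\sigma$ is a permutation $\pi \in S_{t_\sigma}$ together with a choice of one element of $L_i \cap R_{\pi(i)}$ for each $i$; distinct selections determine distinct permutations, so there is no overcounting, and the number of valid selections in $\Delta_\sigma$ is
\[
\sum_{\pi \in S_{t_\sigma}} \prod_{i=1}^{t_\sigma} \vert L_i \cap R_{\pi(i)}\vert \,=\, t_\sigma!\, w_\sigma^{t_\sigma},
\]
which is precisely the permanent of the all-$w_\sigma$ matrix of size $t_\sigma$. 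Multiplying over the $s$ components gives $\lvert\mathrm{DT}_G(H,K)\rvert = \prod_{\sigma=1}^s t_\sigma!\, w_\sigma^{t_\sigma}$.

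Finally I would substitute the relation $\vert H\vert = w_\sigma t_\sigma$ from Lemma~\ref{Lem:HKWeight}(a), i.e.\ $w_\sigma = \vert H\vert / t_\sigma$ (which also yields $t_\sigma \,\vert\, \vert H\vert$), so that $t_\sigma!\, w_\sigma^{t_\sigma} = (t_\sigma!/t_\sigma^{t_\sigma})\,\vert H\vert^{t_\sigma}$. Since each left coset lies in exactly one component, $\sum_\sigma t_\sigma = n$, so the powers of $\vert H\vert$ collect to $\vert H\vert^n$ and cancel against the denominator, delivering $P_G(H,K) = \prod_{\sigma=1}^s t_\sigma!/t_\sigma^{t_\sigma}$, as claimed.
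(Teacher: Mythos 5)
Your proposal is correct and follows essentially the same route as the paper: factorise the count of double transversals over the connected components of $\Gamma^G_{H,K}$, count $t_\sigma!\,w_\sigma^{t_\sigma}$ valid selections per component (the permanent of the constant $w_\sigma$ matrix), and substitute $w_\sigma = \vert H\vert/t_\sigma$ from Lemma~\ref{Lem:HKWeight}. You are in fact somewhat more careful than the paper in justifying the localisation step (that a global two-sided transversal must induce a bijection between left and right cosets \emph{within} each component), which the paper dismisses as clear from the definition of $\Gamma$.
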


\begin{proofof}
It is clear from the definition of $\Gamma$ that the number of common transversals for the pair $(H, K)$ in $G$ equals the product over the number of common transversals for the $t_\sigma$ left-$H$ and right-$K$ cosets in each component $\Delta_\sigma$ of $\Gamma$. Moreover, the fact that each component $\Delta_\sigma$ is a symmetric complete bipartite graph $K_{t_\sigma, t_\sigma}$, while the weight $w_\sigma$ on every edge of $\Delta_\sigma$ is the size of the intersection of any left-$H$ with with any right-$K$ coset of
$\Delta_\sigma$, implies that the number of common $(H, K)$-double transversals for the $t_\sigma$ left-$H$ and $t_\sigma$ right-$K$ cosets of $\Delta_\sigma$ equals $t_\sigma!\cdot w_\sigma^{t_\sigma}$. By Lemma~\ref{Lem:HKWeight}, we have $w_\sigma = \vert H\vert/t_\sigma$, in particular, $t_\sigma \,\big\vert\, \vert H\vert$ for each $\sigma \in [s]$. Since $\sum_\sigma t_\sigma = n$, it follows that 
\begin{equation*}
\vert \mathrm{DT}_G(H, K)\vert = \prod_{\sigma = 1}^s t_\sigma!\, w_\sigma^{t_\sigma}
 = \prod_{\sigma = 1}^s \left(\frac{t_\sigma!}{t_\sigma^{t_\sigma}}\cdot \vert H\vert ^{t_\sigma}\right)
 = \vert H \vert^{\sum_\sigma t_\sigma}\cdot \prod_{\sigma = 1}^s \frac{t_\sigma!}{t_\sigma^{t_\sigma}} 
 = \vert H\vert^n\cdot \prod_{\sigma = 1}^s \frac{t_\sigma!}{t_\sigma^{t_\sigma}}.
\end{equation*} 
Hence, 
\[
P_G(H, K) = \frac{\vert \mathrm{DT}_G(H, K)\vert}{\vert H\vert^n} = \prod_{\sigma = 1}^s \frac{t_\sigma!}{t_\sigma^{t_\sigma}},
\]
as claimed.
\end{proofof}

\begin{definition}
Let $H \leq G$ and let $\{\Delta_{t_{\sigma}}\}_\sigma$ be the family of connected components of $\Gamma^G_{H},$ where the family of positive integers $\{ t_{\sigma} \}$ is defined as above.
We list the integers $t_{\sigma} $ in decreasing order, say $( t_1, \cdots , t_s),$ and call the resulting vector the $t$-vector of $H$ in $G$.
\end{definition}

\begin{corollary}
\label{Cor:P = 1}
We have $P_G(H, K) = 1$ if, and only if, $H = K \unlhd G$. Hence  $\mathrm{tp}(G) = 1$ if, and only if, $G$ is a Dedekind group; that is, if, and only if, every subgroup in $G$ is normal.
\end{corollary}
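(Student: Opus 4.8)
The goal is to characterise exactly when $P_G(H,K)=1$ and then deduce the statement about $\mathrm{tp}(G)$. The plan is to use the product formula from Proposition~\ref{Prop:PFormula}, namely
\[
P_G(H,K)=\prod_{\sigma=1}^s \frac{t_\sigma!}{t_\sigma^{t_\sigma}},
\]
where $t_\sigma\mid |H|$ and $\sum_\sigma t_\sigma = n$. The first and key arithmetic observation is that each factor $t_\sigma!/t_\sigma^{t_\sigma}$ is at most $1$, with equality if, and only if, $t_\sigma=1$. Indeed, $t_\sigma!=\prod_{j=1}^{t_\sigma} j \leq t_\sigma^{t_\sigma}$ term-by-term, and the inequality is strict as soon as $t_\sigma\geq 2$ (the factor $j=1$ already contributes $1<t_\sigma$). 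Since the whole product is a product of factors each lying in $(0,1]$, it equals $1$ exactly when every factor is $1$, i.e.\ when $t_\sigma=1$ for all $\sigma\in[s]$.

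Next I would translate the condition \enquote{$t_\sigma=1$ for all $\sigma$} into a statement about $H$ and $K$. If every $t_\sigma=1$ then $s=n$ (because $\sum_\sigma t_\sigma=n$), so every connected component of $\Gamma^G_{H,K}$ is a copy of $K_{1,1}$. By Lemma~\ref{Lem:HKWeight}(a), having $n$ components all of type $K_{1,1}$ forces $m=n$ in formula~\eqref{Eq:mForm}; this requires $H$ to be conjugate to $K$ in $G$ and $(N_G(H):H)=n=(G:H)$, whence $N_G(H)=G$, i.e.\ $H\unlhd G$. A normal subgroup equals all its conjugates, so $H$ conjugate to $K$ together with $H\unlhd G$ gives $K=H$. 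Conversely, if $H=K\unlhd G$, then the left and right cosets of $H$ coincide, so each left coset $gH=Hg$ is a single right coset; the graph $\Gamma^G_H$ consists of $n$ components each $K_{1,1}$, every $t_\sigma=1$, and the product is $1$. This establishes the equivalence $P_G(H,K)=1\iff H=K\unlhd G$.

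For the statement about $\mathrm{tp}(G)=\min_{H\leq G}P_G(H)$, recall $P_G(H)=P_G(H,H)$, and by the first part $P_G(H)=1$ if, and only if, $H\unlhd G$; otherwise $P_G(H)<1$. Since each $P_G(H)\leq 1$, the minimum over all $H\leq G$ equals $1$ precisely when $P_G(H)=1$ for \emph{every} subgroup $H\leq G$, that is, precisely when every subgroup of $G$ is normal, which is the definition of a Dedekind group. Thus $\mathrm{tp}(G)=1\iff G$ is Dedekind.

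I expect the only genuinely delicate point to be the forward direction of the first equivalence: deducing $H=K\unlhd G$ from the purely combinatorial fact that all $t_\sigma=1$. The inference that $m=n$ forces normality relies on correctly reading off formula~\eqref{Eq:mForm} (so that $(N_G(H):H)=n$ yields $N_G(H)=G$) and on observing that conjugacy of $H$ and $K$ collapses to equality once $H$ is normal. Everything else---the elementary inequality $t!\leq t^t$ with its strictness, and the reduction of $\mathrm{tp}(G)$ to a condition on all subgroups---is routine.
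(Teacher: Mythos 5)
Your proof is correct and follows essentially the same route as the paper's: both apply Proposition~\ref{Prop:PFormula}, observe that each factor $t_\sigma!/t_\sigma^{t_\sigma}\leq 1$ with equality only for $t_\sigma=1$, and then use $m=s=n$ together with~\eqref{Eq:mForm} to force $H$ conjugate to $K$ and $N_G(H)=G$, hence $H=K\unlhd G$. The deduction for $\mathrm{tp}(G)$ is likewise the same.
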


\begin{proofof}
If $H = K \unlhd G$, then $P_G(H, K) = P_G(H) = 1$. Conversely, suppose that $H, K \leq G$ are such that $P_G(H, K) = 1$. By Proposition~\ref{Prop:PFormula},
\[
P_G(H, K) = \prod_{\sigma = 1}^s \frac{t_\sigma!}{t_\sigma^{t_\sigma}} = 1, 
\]
where $s = \vert K\backslash G/H\vert$, so that $t_\sigma = 1$ for all $\sigma \in [s]$. Since $\sum_\sigma t_\sigma = n$, it follows that $m = s = n$, thus, by \eqref{Eq:mForm},
$H$ and $K$ are conjugate, and we have $N_G(H) = G$, so that $H \unlhd G$ and $H = K$.
The rest of the Corollary follows directly from the definition of $\mathrm{tp}(G)$.
\end{proofof}

\begin{corollary}
\label{Cor:FrobGen}
Suppose that $H, K \leq G$ are such that $(G:H) = n = (G:K),$ and that $H \cap K^g = 1$ for $g\not\in KH$. Then
\[
P_G(H, K) = \left(\frac{\vert H\vert!}{\vert H\vert^{\vert H\vert}}\right)^{\frac{n-m}{\vert H\vert}}.
\]
\end{corollary}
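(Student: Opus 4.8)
The plan is to read off $P_G(H,K)$ from the product formula \eqref{Eq:PFormula} of Proposition~\ref{Prop:PFormula}, so that everything reduces to identifying the parameters $t_\sigma$ of the connected components $\Delta_\sigma \cong K_{t_\sigma,t_\sigma}$ of $\Gamma = \Gamma^G_{H,K}$. Since $(G:H) = (G:K) = n$ forces $|H| = |K|$, Lemma~\ref{Lem:HKWeight}(b) gives $t_\sigma = (H : H \cap K^{g_\sigma})$, where $g_\sigma$ represents the double coset $Kg_\sigma H$ carried by $\Delta_\sigma$. Thus I would first reformulate the claim as a statement about the possible indices $(H : H \cap K^{g_\sigma})$ as $g_\sigma$ ranges over representatives of $K\backslash G/H$.

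The heart of the proof is to show that, under the hypothesis, each component is of exactly one of two shapes: the trivial type $K_{1,1}$ (with $t_\sigma = 1$) or the full type $K_{|H|,|H|}$ (with $t_\sigma = |H|$), so that no intermediate value of $t_\sigma$ occurs. For any representative $g_\sigma \notin KH$ the hypothesis gives $H \cap K^{g_\sigma} = 1$ outright, whence $t_\sigma = (H:1) = |H|$ and $\Delta_\sigma \cong K_{|H|,|H|}$. The only double coset with representative inside $KH$ is the principal one, $KH$ itself; here a short computation shows that $H \cap K^{g}$ is a conjugate of $H \cap K$ inside $H$ for every $g \in KH$, so the principal component has parameter $t_0 = (H : H\cap K)$. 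It therefore remains to see that $t_0 \in \{1, |H|\}$, equivalently that either $H = K$ or $H \cap K = 1$. Here I would use that the hypothesis is incompatible with $H$ being conjugate to $K$ unless $H = K$: a conjugating element $g$ with $K^g = H$ cannot lie in $KH$ (else $K^g = K^h = H$ for some $h \in H$, forcing $K = H$), yet for $g \notin KH$ the hypothesis would force $1 = H \cap K^g = H$. Consequently the components with $t_\sigma = 1$ --- those with $K^{g_\sigma} = H$ --- are precisely the $m$ trivial ones counted in \eqref{Eq:mForm}, and the principal component is either this single trivial component (when $H = K$) or a full one (when $H \neq K$).

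Granting the dichotomy, the computation is immediate. By Lemma~\ref{Lem:HKWeight}(a) and \eqref{Eq:mForm} there are exactly $m$ components of type $K_{1,1}$; write $r$ for the number of components of type $K_{|H|,|H|}$. The constraint $\sum_\sigma t_\sigma = n$ from Proposition~\ref{Prop:PFormula} then reads $m\cdot 1 + r\cdot |H| = n$, so that $r = (n-m)/|H|$, which in particular is a non-negative integer. Substituting into \eqref{Eq:PFormula} and using $1!/1^1 = 1$ yields
\[
P_G(H,K) = \left(\frac{1!}{1^1}\right)^{m}\left(\frac{|H|!}{|H|^{|H|}}\right)^{r} = \left(\frac{|H|!}{|H|^{|H|}}\right)^{\frac{n-m}{|H|}},
\]
as asserted.

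I expect the genuine obstacle to be the middle paragraph, namely verifying the dichotomy for the one double coset the hypothesis does not directly control, the principal coset $KH$. The subtlety is that the assumption only constrains $g \notin KH$, so the size of $H \cap K$ itself must be pinned down by a separate argument; the cleanest route seems to be to show first that $H$ conjugate to $K$ forces $H = K$ (and then $N_G(H) = H$ by the resulting Frobenius condition, so $m = 1$), and thereafter to exclude $1 < |H \cap K| < |H|$ when $H \neq K$, forcing the principal coset to be full. Once no intermediate component survives, the product formula does the rest.
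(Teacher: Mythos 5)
Your reduction to the component parameters $t_\sigma$, the computation for the non-principal double cosets, the observation that the hypothesis forces $H=K$ whenever $H$ and $K$ are conjugate and nontrivial, and the final bookkeeping $m\cdot 1 + r\cdot|H| = n$ are all correct. The genuine gap is exactly the one you flag yourself: the claim that when $H \neq K$ the principal coset $KH$ must be ``full'', i.e.\ that $H \cap K = 1$. This cannot be repaired, because it is false. Take $G = S_4$, $H = \langle (1\,2\,3\,4)\rangle$ and $K = \langle (1\,3),(2\,4)\rangle$. Both have index $6$ and they are not conjugate, so $m=0$ by \eqref{Eq:mForm}; moreover $KH$ is the Sylow $2$-subgroup $\langle (1\,2\,3\,4),(1\,3)\rangle = N_G(K)$, and for $g \notin KH$ the conjugate $K^g$ is one of the other two ``pair'' Klein subgroups, each of which meets $H$ trivially, so the hypothesis of the corollary holds. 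Yet $H \cap K = \langle (1\,3)(2\,4)\rangle$ has order $2$, the $t$-vector is $(4,2)$, and $P_G(H,K) = \frac{2!}{2^2}\cdot\frac{4!}{4^4} = \frac{3}{64}$, whereas the stated formula would give $\left(\frac{4!}{4^4}\right)^{3/2}$, which is not even rational. So the dichotomy you need in the non-conjugate case genuinely fails, and the corollary as literally stated requires the extra hypothesis $H\cap K=1$ (or $H=K$).

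For what it is worth, the paper's own proof does not address this point either: it notes $t_\sigma = |H|$ for $g_\sigma \notin KH$ and then writes $n = \sum_\sigma t_\sigma = 1 + |H|(s-1)$, which silently assumes the principal component contributes $t=1$, i.e.\ $H = K$; accordingly its displayed conclusion has exponent $(n-1)/|H|$ rather than $(n-m)/|H|$. In the only place the corollary is used (Corollary~\ref{Cor:Frob}, the Frobenius case $H=K$, where $m=1$) everything is consistent. So your diagnosis of where the difficulty sits is exactly right; the fix is not a cleverer argument for the dichotomy but an added hypothesis, under which your argument --- essentially the paper's, spelled out more carefully --- is complete.
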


\begin{proofof}
Apart from the double coset $HK$ for every other $g_{\sigma} \in \Delta_{\sigma}$  we have 
\[
t_\sigma = \frac{\vert H\vert}{\vert H \cap K^{g_\sigma}\vert} = \vert H\vert,\quad g_\sigma \not\in KH
\]
by Lemma~\ref{Lem:HKWeight}(b). Hence  
\[
P_G(H, K) = \left(\frac{\vert H\vert!}{\vert H\vert^{\vert H\vert}}\right)^{s-1} = \left(\frac{\vert H\vert!}{\vert H\vert^{\vert H\vert}}\right)^{\frac{n-1}{\vert H\vert}} 
\]
by Proposition~\ref{Prop:PFormula}, plus the fact that $n=\sum_{\sigma =1}^s t_\sigma = 1+|H|(s-1)$.
\end{proofof}

\begin{corollary}
\label{Cor:Frob}
Let $G$ be a finite Frobenius group with Frobenius complement $H,$ and let $(G:H) = n$. Then 
\[
P_G(H) = \left(\frac{\vert H\vert!}{\vert H\vert^{\vert H\vert}}\right)^{\frac{n-1}{\vert H\vert}}.
\]
\end{corollary}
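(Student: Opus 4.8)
The plan is to recognise this statement as the specialisation $K = H$ of Corollary~\ref{Cor:FrobGen}, so that the entire task reduces to checking that the hypotheses feeding into that corollary hold and that the parameter $m$ takes the value $1$.

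First I would recall the defining property of a Frobenius complement: by definition $H \cap H^g = 1$ for every $g \in G \setminus H$. Since $H$ is a subgroup, we have $KH = HH = H$ once we set $K = H$, so the clause \enquote{$g \not\in KH$} appearing in Corollary~\ref{Cor:FrobGen} is precisely \enquote{$g \not\in H$}. Thus the hypothesis \enquote{$H \cap K^g = 1$ for $g \not\in KH$} of that corollary is satisfied verbatim, and the index condition $(G:H) = n = (G:K)$ is automatic.

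Next I would pin down $m$. Formula~\eqref{Eq:mForm} gives $m = (N_G(H):H)$ whenever $H$ is conjugate to $K$, which is immediate here as $K = H$. It therefore suffices to observe that a Frobenius complement is self-normalising, and this follows from the same disjointness property: if $g \in N_G(H)$ then $H^g = H$, whence $H \cap H^g = H \neq 1$; comparing this with $H \cap H^g = 1$ for $g \notin H$ forces $g \in H$. Hence $N_G(H) = H$ and $m = (N_G(H):H) = 1$.

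With the hypotheses verified and $m = 1$ in hand, substituting $K = H$ and $m = 1$ into the conclusion of Corollary~\ref{Cor:FrobGen} yields
\[
P_G(H) = P_G(H, H) = \left(\frac{\vert H\vert!}{\vert H\vert^{\vert H\vert}}\right)^{\frac{n-m}{\vert H\vert}} = \left(\frac{\vert H\vert!}{\vert H\vert^{\vert H\vert}}\right)^{\frac{n-1}{\vert H\vert}},
\]
as desired. There is no genuine obstacle in this argument; the only point requiring any care is the verification that the complement is self-normalising, since it is exactly this fact that guarantees $m = 1$ (rather than a larger index) and hence pins the exponent to $\tfrac{n-1}{\vert H\vert}$.
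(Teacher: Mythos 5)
Your proposal is correct and follows exactly the paper's own route: the paper likewise deduces the corollary by specialising Corollary~\ref{Cor:FrobGen} with $K = H$, noting $H \cap H^g = 1$ for $g \notin H$ and $N_G(H) = H$. Your only addition is spelling out why the complement is self-normalising, which the paper asserts without proof.
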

\begin{proofof}
Here $H = K$, $H \cap H^g = 1$ for $g\not\in H$, and $N_G(H) = H$, whence the result by Corollary~\ref{Cor:FrobGen}.
\end{proofof}

\begin{corollary}
\label{Cor:Hprime}
Let $H\leq G$ be a subgroup of order $\vert H\vert = p$ a prime, and suppose that $(G:H) = n$. Then
\[
P_G(H) = \left(\frac{p!}{p^p}\right)^{\frac{n-m}{p}},
\]
where $m = (N_G(H):H),$ and the exponent on the right-hand side is an integer.
\end{corollary}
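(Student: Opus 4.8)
The plan is to specialise Proposition~\ref{Prop:PFormula} to the case $H = K$ and then exploit the hypothesis that $|H| = p$ is prime. By that proposition, every connected component $\Delta_\sigma \cong K_{t_\sigma, t_\sigma}$ of $\Gamma^G_H$ satisfies $t_\sigma \,\big\vert\, |H| = p$; hence $t_\sigma \in \{1, p\}$ for each $\sigma$, and there are only two possible component types, namely $K_{1,1}$ and $K_{p,p}$. This is the structural observation that makes the whole computation collapse.

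First I would evaluate the two corresponding factors appearing in the product formula \eqref{Eq:PFormula}: a component with $t_\sigma = 1$ contributes $1!/1^1 = 1$, while a component with $t_\sigma = p$ contributes $p!/p^p$. Writing $m$ for the number of components of type $K_{1,1}$ and $r$ for the number of type $K_{p,p}$, Proposition~\ref{Prop:PFormula} then yields
\[
P_G(H) = \left(\frac{p!}{p^p}\right)^r.
\]
Since $H = K$, the subgroup $H$ is (trivially) conjugate to itself, so the conjugacy alternative in formula \eqref{Eq:mForm} of Lemma~\ref{Lem:HKWeight}(a) applies and identifies $m = (N_G(H):H)$, exactly as stated in the corollary.

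It remains to express $r$ in terms of the given data, and for this I would invoke the constraint $\sum_\sigma t_\sigma = n$ recorded in Proposition~\ref{Prop:PFormula}. As the trivial components contribute $m \cdot 1$ and the $K_{p,p}$ components contribute $r \cdot p$ to this sum, we obtain $m + rp = n$, whence $r = (n-m)/p$. Substituting this value of $r$ into the expression above gives the claimed formula for $P_G(H)$. Moreover, because $r$ counts connected components it is a non-negative integer, so the exponent $(n-m)/p$ is automatically an integer, which settles the final assertion without any additional divisibility argument.

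There is no genuine obstacle in this proof: the prime order of $H$ forces $t_\sigma \in \{1,p\}$, thereby collapsing the general product \eqref{Eq:PFormula} into a single repeated factor, and both the identification of $m$ and the integrality of the exponent come for free from results already in hand. The only point requiring a moment's care is to record explicitly that \eqref{Eq:mForm} is applicable precisely because setting $K = H$ places us in the conjugacy case of that formula.
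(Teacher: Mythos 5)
Your proof is correct and follows essentially the same route as the paper: the paper's (much terser) proof likewise observes that $\Gamma^G_H$ consists of $m$ trivial components together with $\frac{n-m}{p}$ components isomorphic to $K_{p,p}$, and then applies Proposition~\ref{Prop:PFormula}. Your write-up simply makes explicit the divisibility observation $t_\sigma \in \{1,p\}$, the identification of $m$ via \eqref{Eq:mForm}, and the counting argument $m + rp = n$ that the paper leaves implicit.
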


\begin{proofof}
Apart from $m$ trivial components in 
$\Gamma^G_H \coloneqq \Gamma^G_{H, H}$, 
we have $\frac{n-m}{p}$ components $\Delta_\sigma \cong K_{p, p}$. In particular, this quotient is an integer. The result now follows from Proposition~\ref{Prop:PFormula}. 
\end{proofof}

\subsection{The function $P_G$ and permanents}

\noindent At this point, we wish to briefly discuss a connection between the probability $P_G(H, K)$, where $H$ and $K$ are subgroups of the finite group $G$ such that $(G:H) = n = (G:K)$, and the permanent of a certain associated matrix (for properties of permanents used in what follows, the reader is referred to the standard reference \cite{Minc}). The matrix we associate to the triple $(G,H,K)$ is the $n \times n$ weight matrix $W = W_{H,K}^G = (w_{i,j})$, whose $(i,j)$ entry is 
\[
w_{i,j} = |l_iH \cap K r_j|.
\]
That is, $w_{i,j}$ is the weight assigned to the edge $e = l_iH - K r_j $ of the coset intersection graph $\Gamma = \Gamma^G_{H, K}$. 
A simple combinatorial argument shows that 
\[
\vert\mathrm{DT}_G(H, K)\vert = \sum_{\sigma\in S_{n}}\,\prod_{i = 1}^n w_{i,\sigma(i)}. \
\]
As the right hand side in the above equation is exactly the permanent of the matrix $W$,
we get 
\begin{equation}
 |\mathrm{DT}_G(H, K)| = \mathrm{per}(W_{H,K}^G).
\end{equation}

\noindent Observe now that all row and column sums of $W$ are equal to $|H| = |K|$; that is, 
\[
\sum_{i = 1}^n w_{i,r} = |H| = \sum_{j = 1}^n w_{t,j},\quad (1\leq r, t\leq n).\\
\]
Thus, the matrix 
\[
M = M_{H,K}^G \coloneqq \frac{1}{|H|} \cdot W_{H,K}^G
\]
is an $n \times n$ doubly stochastic matrix
 (all row and column sums equal to 1); 
furthermore, its permanent is exactly the common transversal
probability for the subgroups $ H, K \leq G$, as 
\[
\mathrm{per}(M) = \frac{1}{|H|^n} \cdot \mathrm{per}(W_{H,K}^G) = P_G(H,K). 
\]
In addition, after appropriate row and column permutations, we can group together cosets (left for $H$ and right for $K$) which intersect. In this way, $M$ is seen to be equivalent to a block diagonal matrix: there exist $n \times n$ permutation matrices $P$ and $Q$, such that 
\begin{equation}
 P M Q = \begin{bmatrix} 
 M_1 & 0  & \cdots & 0  \\
 0  & M_2 & \cdots & 0  \\
 \vdots & \vdots & \ddots & \vdots \\
 0  & 0  & \cdots & M_s
 \end{bmatrix}
\,\,,
\end{equation}
 where, for $\sigma = 1, \ldots, s$, the matrix $M_\sigma$ is the $t_\sigma \times t_\sigma$ rational matrix
whose entries all equal $\frac{1}{t_\sigma}$. As usual, $t_{\sigma} = (H : H \cap K^{g_{\sigma}}) $, where 
 $\{t_{\sigma}\}_{\sigma = 1}^{s}$ is the family of sizes of the connected components of the coset intersection graph $\Gamma^G_{H, K}$.
 Collecting together these observations, we get most of the following. 
\begin{proposition}
The doubly stochastic matrix $M = M_{H,K}^G$ satisfies 
\begin{equation}
\label{Eq:MPerm}
\mathrm{per}(M) = P_G(H, K) = \prod_{\sigma = 1}^s\frac{(t_\sigma)!}{t_\sigma^{t_\sigma}}.
\end{equation}
The eigenvalues of $M$ are $0$ 
with multiplicity $n - s,$ and $1$ with multiplicity $s$. Moreover, $M$ is positive semi-definite.
\end{proposition}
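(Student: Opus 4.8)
I would treat the three assertions separately, since they have quite different content. The permanent identity I regard as essentially settled by the computation preceding the statement: the permanent is invariant under independent row and column permutations and is multiplicative over diagonal blocks, so $\mathrm{per}(M)=\mathrm{per}(PMQ)=\prod_{\sigma=1}^{s}\mathrm{per}(M_\sigma)$, and a one-line count gives $\mathrm{per}(M_\sigma)=\mathrm{per}\!\left(\tfrac{1}{t_\sigma}J_{t_\sigma}\right)=t_\sigma!/t_\sigma^{t_\sigma}$, where $J_t$ denotes the $t\times t$ all-ones matrix. Combined with Proposition~\ref{Prop:PFormula} this reproduces the claimed value. The genuine work therefore lies in the spectrum and the positive semidefiniteness.

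Here the main obstacle is that the block decomposition $PMQ=\mathrm{diag}(M_1,\dots,M_s)$ recorded above is only an \emph{equivalence}: it uses two independent permutation matrices, and such an equivalence preserves the permanent but neither the spectrum nor positive semidefiniteness. (Reordering the columns of $I_2$ turns it into the swap matrix, whose eigenvalues are $\pm1$.) So the plan is to replace this equivalence by a single simultaneous permutation, i.e. a genuine conjugation. This is available precisely because $\vert H\vert=\vert K\vert$ forces every component to be the balanced complete bipartite graph $\Delta_\sigma\cong K_{t_\sigma,t_\sigma}$ by Lemma~\ref{Lem:HKWeight}(a), so $\Delta_\sigma$ carries exactly $t_\sigma$ left $H$-cosets and $t_\sigma$ right $K$-cosets. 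I would choose the two transversals so that, for every index $i$, the $i$-th left coset and the $i$-th right coset lie in the same component, and then group indices by component. Conjugating $M$ by the corresponding single permutation matrix yields
\[
M\;\cong\;\bigoplus_{\sigma=1}^{s}\frac{1}{t_\sigma}J_{t_\sigma},
\]
because within a block each left/right pair meets in $w_\sigma=\vert H\vert/t_\sigma$ elements (giving entry $1/t_\sigma$), while cosets from distinct double cosets are disjoint and so the off-diagonal blocks vanish. In this labelling $M$ is moreover symmetric.

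It then remains to analyse a single block. Writing $J_t=\mathbf{1}\mathbf{1}^{\top}$, the matrix $J_t$ has eigenvalue $t$ on the line spanned by $\mathbf{1}$ and eigenvalue $0$ on its orthogonal complement, so $\tfrac{1}{t}J_t$ has spectrum $\{1\}\cup\{0\}^{t-1}$. Passing to the direct sum, the eigenvalue $1$ contributes once per block, hence with total multiplicity $s$, and the eigenvalue $0$ contributes with multiplicity $\sum_{\sigma}(t_\sigma-1)=\bigl(\sum_\sigma t_\sigma\bigr)-s=n-s$; since $s+(n-s)=n$ this accounts for all eigenvalues. Finally, each block satisfies $x^{\top}\!\left(\tfrac{1}{t}\mathbf{1}\mathbf{1}^{\top}\right)x=\tfrac{1}{t}\,(\mathbf{1}^{\top}x)^2\ge 0$, so every block is symmetric positive semidefinite, and therefore so is their direct sum, and hence $M$ itself. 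The one point I would be careful to emphasise in the writeup is exactly the equivalence-versus-similarity gap flagged above, together with the observation that balancedness of the components (a consequence of $\vert H\vert=\vert K\vert$) is what makes the compatible, simultaneous relabelling possible.
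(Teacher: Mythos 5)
Your proof is correct, and in outline it is the same as the paper's: multiplicativity of the permanent over diagonal blocks, the one-line count $\mathrm{per}\bigl(\tfrac{1}{t}J_t\bigr)=t!/t^t$, and the spectrum of $\tfrac{1}{t}J_t$. Where you genuinely depart from (and improve on) the paper is in the spectral part. The paper passes from the characteristic polynomials of the blocks $M_\sigma$ to that of $M$ via the equivalence $PMQ=\mathrm{diag}(M_1,\dots,M_s)$, but an equivalence with two independent permutation matrices preserves the permanent and not the spectrum, exactly as your $I_2$-versus-swap example shows; likewise the paper's closing assertion that ``$M$ is symmetric'' is not automatic for an arbitrary labelling of the two transversals. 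Indeed, for $H=K\unlhd G$ and a careless pairing of left and right representatives, $M$ is an arbitrary $n\times n$ permutation matrix, whose eigenvalues are roots of unity rather than just $0$ and $1$, so the spectral claims are genuinely labelling-dependent. Your fix --- use the balancedness $t_\sigma\times t_\sigma$ of each component, guaranteed by $\vert H\vert=\vert K\vert$ via Lemma~\ref{Lem:HKWeight}(a), to label the transversals compatibly (the simplest choice is $r_i=l_i$, since $l_i\in l_iH\cap Kl_i$ places the $i$-th left and right cosets in the same component) and then reorder rows and columns by one and the same permutation --- upgrades the equivalence to a genuine similarity $PMP^{\top}=\bigoplus_{\sigma}\tfrac{1}{t_\sigma}J_{t_\sigma}$, after which the eigenvalue count ($1$ with multiplicity $s$, $0$ with multiplicity $\sum_\sigma(t_\sigma-1)=n-s$) and positive semidefiniteness via $x^{\top}\bigl(\tfrac{1}{t}\mathbf{1}\mathbf{1}^{\top}\bigr)x=\tfrac{1}{t}(\mathbf{1}^{\top}x)^2\geq 0$ follow exactly as you say. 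In the writeup I would add one explicit sentence stating that the spectral assertions of the proposition are understood with respect to such a compatible labelling, since, unlike the permanent, they are not invariant under independent relabelling of the two transversals.
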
 

\begin{proofof}
We have already seen that $ \mathrm{per}(M) = P_G(H, K)$ and the right-hand side equals the desired product according to Proposition \ref{Prop:PFormula}. We remark here that an alternative way to see that $\mathrm{per}(M_{H,K}^G) = \prod_{\sigma = 1}^s\frac{(t_\sigma)!}{t_\sigma^{t_\sigma}}$ is the following: 
Since $PMQ = \mathrm{diag}(M_1, \ldots, M_s)$,
we have $\mathrm{per}(PMQ) = \mathrm{per}(M) = \prod_{\sigma = 1}^{s} \mathrm{per}(M_\sigma)$. On the other hand, working directly from the definition, the permanent of a square matrix $M = (m_{i, j})$ of size $c$
with all entries equal to some fixed number $a$ is seen to be 
\[
\mathrm{per}(M) = \sum_{\sigma\in S_{c}}\,\prod_{i = 1}^c m_{i,\sigma(i)} = 
\sum_{\sigma\in S_{c}} a^c = c!\,a^c.
\]
Specifying $a = 1/t_\sigma$ and $c = t_\sigma$, for each $\sigma \in [s]$, yields 
\[
\mathrm{per}(M_\sigma) = t_\sigma!/t_\sigma^{t_\sigma},
\] 
whence \eqref{Eq:MPerm}. The assertion concerning the eigenvalues of $M$ follows from the fact that the characteristic polynomial
of the matrix $M_\sigma$ is 
\[
\mathrm{det}(M_\sigma -\lambda I) = (-\lambda)^{t_\sigma-1}(1-\lambda),
\]
which implies that the characteristic polynomial
of the matrix $M$ is given by 
\[
\mathrm{det}(M -\lambda I) = (-\lambda)^{n-s}(1-\lambda)^s.
\]
Thus, $\mathrm{tr}(M) = s$, while $\mathrm{det}(M) = 0$, which can, of course, also be seen directly. 
Also, $\mathrm{rk}(M) = s$. 
Since $M$ is symmetric and its eigenvalues
are non-negative, it follows that $M$ is positive semi-definite.
\end{proofof}

\subsection{A limit theorem}\label{Sec:PLim}
\noindent
The aim in this subsection is twofold.
Firstly, we will give sharp bounds for the value $P_G(H, K)$;
so, for example, we will prove that for non-normal conjugate subgroups its value cannot exceed $1/2$. 
Secondly,  we will show that the bigger the index $n = (G : H) = (G : K)$ is
the smaller the value $P_G(H, K)$ becomes. 
We start with the following purely arithmetic result.

\begin{lemma}
\label{Lem:ProdEst}
Let $s\geq 1,$ and let $\{t_\sigma\}_{\sigma = 1}^s$ 
be a family of positive integers with $\sum_{\sigma = 1}^s t_\sigma = n$. 
Then 
\begin{equation}
\label{Eq:PAsymp1}
\frac{n!}{n^n} \leq \prod_{\sigma = 1}^s \frac{t_\sigma!}{t_\sigma^{t_\sigma}}\, \leq\, \left(\frac{n+s}{2n}\right)^n.
\end{equation}
Furthermore if $m$ of the $t_\sigma$ are equal to {\em 1,} then 
\begin{equation}
 \label{Eq:PAsymp2}
\frac{(n-m)!}{(n-m)^{n-m}} \leq \prod_{\sigma = 1}^s \frac{t_\sigma!}{t_\sigma^{t_\sigma}} \leq \frac{1}{2^{s-m}}.
\end{equation}
\end{lemma}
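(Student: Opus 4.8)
Throughout I write $f(t) \coloneqq t!/t^t$, so that the quantity to be bounded is $\prod_\sigma f(t_\sigma)$; the plan is to extract the four inequalities from elementary properties of $f$ together with the multinomial theorem and Jensen's inequality. For the lower bound in \eqref{Eq:PAsymp1} I would invoke the multinomial theorem with $x_\sigma = t_\sigma$: since every term in the expansion of $(t_1 + \cdots + t_s)^n = n^n$ is nonnegative, retaining only the term with exponent vector $(t_1, \ldots, t_s)$ gives $n^n \geq \binom{n}{t_1, \ldots, t_s}\prod_\sigma t_\sigma^{t_\sigma} = \frac{n!}{\prod_\sigma t_\sigma!}\prod_\sigma t_\sigma^{t_\sigma}$, and rearranging is exactly $\prod_\sigma f(t_\sigma) \geq n!/n^n$.

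For the upper bound in \eqref{Eq:PAsymp1} I would first record the single-variable estimate $f(t) \leq \bigl(\tfrac{t+1}{2t}\bigr)^t$, which is AM--GM applied to the numbers $1, 2, \ldots, t$ (their geometric mean $(t!)^{1/t}$ is at most their arithmetic mean $(t+1)/2$). Substituting this into the product and taking logarithms, the factor $n\log\tfrac12$ cancels on both sides by virtue of $\sum_\sigma t_\sigma = n$, reducing the claim to $\sum_\sigma t_\sigma \log(1 + 1/t_\sigma) \leq n\log(1 + s/n)$. Setting $h(x) \coloneqq x\log(1 + 1/x)$ and using $s\,h(n/s) = n\log(1 + s/n)$, this is precisely $\tfrac1s\sum_\sigma h(t_\sigma) \leq h\bigl(\tfrac1s\sum_\sigma t_\sigma\bigr)$, i.e.\ Jensen's inequality for $h$. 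The one computation I would actually carry out is $h''(x) = -1/\bigl(x(x+1)^2\bigr) < 0$, which certifies that $h$ is concave and hence that Jensen points in the required direction; this concavity-plus-Jensen step is where the genuine content lies and is the part I expect to demand the most care.

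Both halves of \eqref{Eq:PAsymp2} then drop out by separating off the $m$ indices with $t_\sigma = 1$. These contribute the factor $f(1) = 1$ and may be deleted, leaving $s - m$ factors with $t_\sigma \geq 2$ whose indices sum to $n - m$. The lower bound is simply the lower bound of \eqref{Eq:PAsymp1} applied to this shortened family of total $n - m$. For the upper bound I would check that $f$ is strictly decreasing, via $f(t+1)/f(t) = (t/(t+1))^t < 1$, so that $f(t) \leq f(2) = \tfrac12$ for every $t \geq 2$; multiplying over the $s - m$ nontrivial factors gives $\prod_\sigma f(t_\sigma) \leq 2^{-(s-m)}$, as required.
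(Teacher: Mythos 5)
Your proof is correct, but both halves of \eqref{Eq:PAsymp1} are obtained by a route that differs from the paper's. For the lower bound, the paper runs an induction on $s$ whose base case $s=2$ is the binomial-theorem estimate $n^n = (t_1+t_2)^n \geq \binom{n}{t_1}t_1^{t_1}t_2^{t_2}$; your single application of the multinomial theorem packages the whole induction into one line and is arguably cleaner, though it is the same underlying idea. For the upper bound the difference is more substantive: the paper applies the AM--GM inequality once to the entire collection of $n$ numbers $\tau/t_\sigma$ ($1\leq\tau\leq t_\sigma$, $1\leq\sigma\leq s$), whose arithmetic mean is computed directly to be $(n+s)/(2n)$, whereas you first apply AM--GM within each block to get $t_\sigma!/t_\sigma^{t_\sigma} \leq \bigl(\tfrac{t_\sigma+1}{2t_\sigma}\bigr)^{t_\sigma}$ and then aggregate across blocks via Jensen's inequality for the concave function $h(x) = x\log(1+1/x)$ (your computation $h''(x) = -1/\bigl(x(x+1)^2\bigr)$ is correct, as is the cancellation of the $n\log\tfrac12$ terms). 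Your two-step version costs a derivative computation that the paper avoids, but it isolates the single-variable estimate $f(t)\leq\bigl(\tfrac{t+1}{2t}\bigr)^t$ as a reusable ingredient and makes visible that the loss in the bound comes from the concavity of $h$ --- which is in the same spirit as the paper's later refinement in Section~\ref{Sec:PImproved Bound}, where Jensen is applied to $\log f$ itself. The treatment of \eqref{Eq:PAsymp2} is essentially identical to the paper's, with the minor addition that you justify $f(t)\leq 1/2$ for $t\geq 2$ by checking that $f$ is decreasing rather than asserting it.
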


\begin{proofof}
We first prove the left-hand inequality in~\eqref{Eq:PAsymp1} by induction on $s$. 
For $s = 1$, both sides are equal. 
Next, suppose that $s = 2$, so that $t_1 + t_2 = n$. 
Then $t_1 ! \cdot t_2 ! = \binom{n}{t_1}^{-1} n!$,
while $n^n = (t_1 +t_2)^n \geq \binom{n}{t_1} t_1^{t_1} \cdot t_2 ^{t_2}$. 
Thus, 
\[
\frac{n!}{n^n} = \frac{\binom{n}{t_1} t_1! \cdot t_2!}{n^n } \leq \frac{\binom{n}{t_1} t_1! \cdot t_2!}{\binom{n}{t_1} t_1^{t_1} \cdot t_2 ^{t_2} } = \prod_{\sigma = 1}^2 \frac{t_\sigma!}{t_\sigma^{t_\sigma}},
\]
as desired. 
For the induction step, suppose that the left-hand inequality in~\eqref{Eq:PAsymp1} holds for all $s\leq u$ with some $u\geq2$, and let $n = t_1 + t_2 + \cdots + t_{u+1}$. Then $n = t_1 + (n - t_1)$, and the inductive hypothesis for $s = 2$ and $s = u$ gives 
\[
\frac{n!}{n^n} \leq \frac{t_1!}{t_1^{t_1}} \cdot \frac{(n-t_1)!}{(n-t_1)^{n-t_1}} \leq \frac{t_1!}{t_1^{t_1}} \cdot \prod_{\sigma = 2}^{u+1} \frac{t_\sigma!}{t_\sigma^{t_\sigma}}
 = \prod_{\sigma = 1}^{u+1} \frac{t_\sigma!}{t_\sigma^{t_\sigma}},
\]
whence our result.

\noindent The right-hand inequality in \eqref{Eq:PAsymp1} follows from the arithmetic-geometric mean inequality as follows:
\begin{multline*}
\prod_{\sigma = 1}^s \frac{t_\sigma!}{t_\sigma^{t_\sigma}} = 
\prod_{\sigma = 1}^s\, \prod_{\tau = 1}^{t_\sigma} \tau/t_\sigma 
\leq\,
\left(\sum_{\sigma = 1}^{s}\, \sum_{\tau = 1}^{t_\sigma}\,\tau/t_\sigma\Big/\sum_{\sigma = 1}^s t_\sigma\right)^{\sum_{\sigma = 1}^{s}t_\sigma} 
 = \left(\frac{1}{n}\,\sum_{\sigma = 1}^s \frac{1}{t_\sigma} \frac{t_\sigma (t_\sigma + 1)}{2}\right)^n\\[1mm]
 = \left(\sum_{\sigma = 1}^s (t_\sigma + 1)\,\Big/\,2n\right)^n 
 = \left(\frac{n+s}{2n}\right)^n.
\end{multline*}
This establishes the first part of the lemma. Now suppose that 
\[
t_1 = t_2 = \cdots = t_m = 1
\] 
for some $m \leq s$, while $t_\sigma \geq 2$ for $m+1 \leq \sigma \leq s$. Then $n-m = t_{m+1} + \cdots + t_s$ thus, by the left-hand inequality in \eqref{Eq:PAsymp1}, 
\[
\frac{(n-m)!}{(n-m)^{n-m}} \leq 
\prod_{\sigma = m+1}^s \frac{t_\sigma!}{t_\sigma^{t_\sigma}} = 
\prod_{\sigma = 1}^s \frac{t_\sigma!}{t_\sigma^{t_\sigma}}.
\]
Furthermore, for each $\sigma > m$ we have $t_\sigma \geq 2$, so that $t_\sigma! / t_\sigma^{t_\sigma} \leq 1/2$, thus 
\[
\prod_{\sigma = 1}^s t_\sigma! / t_\sigma^{t_\sigma} \leq 2^{-(s-m)}.
\]
This completes the proof of the lemma.
\end{proofof}

\begin{corollary}
\label{Cor:Prod}
Let $G$ be a finite group, and let $H, K\leq G$ be conjugate, non-normal subgroups of index $n$ in $G$. Then we have
\begin{equation}
\label{Eq:PEst}
\frac{(n-1)!}{(n-1)^{n-1}} \leq P_G(H, K) \leq \frac{1}{2},
\end{equation}
and these bounds are best possible. In particular, for $n = 3,$ we have $P_G(H, K) = 1/2$. 
\end{corollary}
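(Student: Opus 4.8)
The plan is to derive both inequalities directly from Lemma~\ref{Lem:ProdEst} via the product formula of Proposition~\ref{Prop:PFormula}, after first extracting from the hypotheses the two combinatorial facts about the double-coset structure that the arithmetic lemma needs. Writing $\Delta_\sigma \cong K_{t_\sigma, t_\sigma}$ for the components of $\Gamma^G_{H,K}$ and letting $m$ be the number of trivial components $K_{1,1}$, Proposition~\ref{Prop:PFormula} gives $P_G(H,K) = \prod_{\sigma=1}^s t_\sigma!/t_\sigma^{t_\sigma}$ with $\sum_\sigma t_\sigma = n$. Since $H$ and $K$ are conjugate, \eqref{Eq:mForm} yields $m = (N_G(H):H) \geq 1$; and since $H$ is non-normal we have $N_G(H)\neq G$, hence $m < n$. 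Because the $s-m$ non-trivial components each have size at least $2$ and together account for the remaining total $n-m$, that total is either $0$ or at least $2$; non-normality excludes $0$, so $n-m \geq 2$ and in particular $s - m \geq 1$. The two facts $m\geq 1$ and $s-m\geq 1$ are exactly what is required to feed into \eqref{Eq:PAsymp2}.

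For the upper bound I would simply invoke the right-hand inequality of \eqref{Eq:PAsymp2}, namely $P_G(H,K) \leq 2^{-(s-m)}$, and conclude $P_G(H,K) \leq 1/2$ from $s-m \geq 1$. For the lower bound the left-hand inequality of \eqref{Eq:PAsymp2} gives $P_G(H,K) \geq (n-m)!/(n-m)^{n-m}$; the one genuinely extra ingredient beyond Lemma~\ref{Lem:ProdEst} is then to replace $n-m$ by $n-1$. For this I would record the elementary monotonicity fact that $f(k) \coloneqq k!/k^k$ is strictly decreasing, since $f(k+1)/f(k) = (k/(k+1))^k < 1$. As $m \geq 1$ forces $n-m \leq n-1$, monotonicity gives $f(n-m) \geq f(n-1) = (n-1)!/(n-1)^{n-1}$, which chains with the previous inequality to produce the stated lower bound.

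Finally, for sharpness and the $n=3$ assertion I would argue through the equality cases. Tracing \eqref{Eq:PAsymp2}, the upper bound $1/2$ is attained precisely when $s-m = 1$ and the unique non-trivial component has $t=2$ (since $t!/t^t = 1/2$ forces $t=2$); this is realised, for instance, by $H = \langle (1\,2)\rangle \leq S_3$, whose $t$-vector is $(2,1)$. The lower bound is attained when $m=1$ and there is a single non-trivial component of size $n-1$, i.e. the $t$-vector is $(n-1,1)$; this is realised by the Frobenius group $G = \mathbb{F}_q \rtimes \mathbb{F}_q^\times$ of order $q(q-1)$ with complement $H$ of order $q-1 = n-1$, for which Corollary~\ref{Cor:Frob} returns exactly $(n-1)!/(n-1)^{n-1}$. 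The displayed special case is then immediate: when $n=3$ the lower bound $(n-1)!/(n-1)^{n-1} = 2!/2^2 = 1/2$ coincides with the upper bound $1/2$, so the two inequalities squeeze $P_G(H,K)$ to exactly $1/2$. The only point demanding real care is the bookkeeping around $m$ and $s$ — deducing $m\geq 1$ from conjugacy and $s-m\geq 1$ from non-normality — for once these are in place both inequalities fall out immediately from the arithmetic lemma.
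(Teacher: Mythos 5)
Your proposal is correct and follows essentially the same route as the paper: reduce to the arithmetic bounds \eqref{Eq:PAsymp2} via Proposition~\ref{Prop:PFormula}, establish $1\leq m$ from conjugacy and $s-m\geq 1$ (equivalently $m<s$) from non-normality, and exhibit sharpness with $S_3$ and the Frobenius group $F\rtimes F^{\times}$. Your only addition is to make explicit the monotonicity of $k!/k^k$ when passing from $n-m$ to $n-1$, a step the paper compresses into one sentence.
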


\begin{proofof}
Let $H, K$ be conjugate subgroups of $G$. Then, according to Lemma~\ref{Lem:HKWeight}(a), the number $m$ of connected components $\Delta_\sigma $  of $\Gamma^G_{H, K}$ of type $\Delta_\sigma \cong K_{1, 1}$ is at least one, i.e.,   $m\geq 1$. On the other hand, we cannot have $m = s = \vert K\backslash G/H\vert$, since otherwise $P_G(H, K) = 1$ by Proposition~\ref{Prop:PFormula}, so that $H = K \unlhd G$ by Corollary~\ref{Cor:P = 1}, contradicting our hypothesis. Hence, we have $1\leq m<s$. Since the left-hand side of \eqref{Eq:PAsymp2} decreases with $m$, while the right-hand side increases with $m$, the result follows.

\noindent We next focus on the sharpness of the bounds in \eqref{Eq:PEst}. Let $F$ be a field with $q$ elements, 
where $q$ is a power of a prime. 
Then the action of the multiplicative group $F^{\ast}$ 
on the additive group of $F$ is of Frobenius type. 
The resulting group $G = F \rtimes F^\times$ is a Frobenius group with Frobenius complement $H \cong F^{\ast}$ of order $q - 1$ and index $q$. 
Thus, by Corollary~\ref{Cor:Frob}, 
\[
P_G(H) = \left[\frac{(q - 1)!}{(q - 1)^{q - 1}}\right]^{\frac{q - 1}{q - 1}} = 
\frac{(q - 1)!}{(q - 1)^{q - 1}},
\]
which shows that the lower bound in Equation~\eqref{Eq:PEst} is attained.

\noindent Also, for $q = 3$, let $G \cong S_3$, and let $H$ be any Sylow $2$-subgroup of $G$. In this case, $P_G(H) = 1/2$, so that the upper bound in \eqref{Eq:PEst} is attained as well.
\end{proofof}

\noindent We can now establish our first main result.

\begin{theorem}\label{Thm:Main}
Let $G$ be a finite group, and let $H, K\leq G$ be subgroups such that \linebreak $(G:H) = n = (G:K),$ and such that at least one of $H, K$ is not normal in $G$. Then 
\[
\lim_{n \to \infty} P_G(H,K) = 0.
\]
\end{theorem}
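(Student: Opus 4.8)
The plan is to bound $P_G(H,K)$ from above by an explicit function of $n$ alone which tends to $0$, so that the stated limit holds uniformly over all admissible configurations $(G,H,K)$. By Proposition~\ref{Prop:PFormula} we have $P_G(H,K) = \prod_{\sigma=1}^s t_\sigma!/t_\sigma^{t_\sigma}$ with $\sum_\sigma t_\sigma = n$. I would first discard the trivial factors: writing $m$ for the number of components with $t_\sigma = 1$ (each contributing a factor $1$), the surviving factors correspond to the $r \coloneqq s - m$ components of sizes $u_1,\dots,u_r \geq 2$, whose sum is $n-m$.

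The crucial step is the inequality $m \leq n/2$. Indeed, by \eqref{Eq:mForm} either $m = 0$ (when $H$ and $K$ are not conjugate in $G$), or $m = (N_G(H):H)$ (when they are). In the latter case the hypothesis that at least one of $H,K$ is non-normal forces $H$ itself to be non-normal, since conjugate subgroups are simultaneously normal or non-normal; hence $N_G(H) \neq G$, so $(G:N_G(H)) \geq 2$, and the tower law gives $n = (G:H) = (G:N_G(H))\cdot m \geq 2m$. In either case we obtain $n - m \geq n/2$.

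Next I would invoke the right-hand inequality of Lemma~\ref{Lem:ProdEst}, applied to the family $\{u_i\}_{i=1}^r$ (which has sum $n-m$), to get
\[
P_G(H,K) = \prod_{i=1}^r \frac{u_i!}{u_i^{u_i}} \;\leq\; \left(\frac{(n-m)+r}{2(n-m)}\right)^{n-m}.
\]
Since each $u_i \geq 2$, we have $r \leq (n-m)/2$, so the base is at most $3/4$; combined with $n-m \geq n/2$ from the previous step this yields $P_G(H,K) \leq (3/4)^{n-m} \leq (3/4)^{n/2}$. As the right-hand side tends to $0$ with $n$, the theorem follows.

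The main obstacle is the bound $m \leq n/2$; everything else is a routine consequence of the formula for $P_G$ and the arithmetic estimate of Lemma~\ref{Lem:ProdEst}. The whole argument hinges on converting the non-normality hypothesis (via \eqref{Eq:mForm}) into the statement that $N_G(H)$ has index at least $2$, which is exactly what controls the number of trivial components. I would also record the alternative route in which one keeps all $s$ factors and applies the right-hand inequality of \eqref{Eq:PAsymp1} directly, together with the upper estimate $s \leq (n-m)/p + m \leq (n+m)/2$ from Lemma~\ref{Lem:sBounds} (as $p \geq 2$); combined with $m \leq n/2$ this gives $s \leq 3n/4$, whence $P_G(H,K) \leq (7/8)^n$ and the same conclusion.
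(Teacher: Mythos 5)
Your proof is correct and is essentially the paper's argument: both hinge on the same key facts, namely the upper bound in Lemma~\ref{Lem:ProdEst} and the observation via \eqref{Eq:mForm} that non-normality forces $m \leq n/2$, and indeed your ``alternative route'' is verbatim the paper's proof, which bounds $P_G(H,K) \leq \left(\tfrac{7}{8}\right)^n$ using $s \leq 3n/4$. Your primary variant, which discards the $m$ trivial components before applying the estimate, gives the marginally sharper bound $\left(\tfrac{3}{4}\right)^{n/2} = \left(\tfrac{\sqrt{3}}{2}\right)^n$, but the mechanism is the same.
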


\begin{proofof}
According to Proposition~\ref{Prop:PFormula} and Lemma~\ref{Lem:ProdEst}, we have 
\begin{equation}\label{eq1}
 P_G(H,K) = \prod_{i = 1}^s \frac{t_i!}{t_i^{t_i}} \leq \left(\frac{n+s}{2n}\right)^n, 
\end{equation}
where $s = \vert K\backslash G/H\vert$ equals the number of connected components of $\Gamma^G_{H, K}$. 
According to Lemma~\ref{Lem:HKWeight} exactly $m$ of those $t_\sigma$'s are equal to 1, while for the rest we get $t_\sigma \geq 2$, where $m= (N_G(H):H)$ if $H, K$ are conjugate,  or $m=0$ if they are not.
As $\sum_{\sigma = 1}^s t_\sigma = n$, we conclude that 
\[
s \leq m + \frac{n-m}{2} = \frac{n+m}{2}.
\]
Furthermore, by \eqref{Eq:mForm}, $m$ is either zero, or divides $n$ \textit{properly} (otherwise we would have $H = K \unlhd G$ contradicting our hypothesis); thus $m \leq \frac{n}{2}$. Hence, $s \leq \frac{3n}{4}$ in either case, and Equation~\eqref{eq1} implies 
\begin{equation}
\label{Eq:PBound}
 P_G(H,K) \leq \left(\frac{n+s}{2n}\right)^n \leq \left(\frac{7}{8}\right)^n.
\end{equation}

As $\lim_{n \to \infty} \left(\frac{7}{8}\right)^n = 0$, the theorem follows.
\end{proofof}

\subsection{An improved bound for $P_G(H, K)$}
\label{Sec:PImproved Bound}

\noindent We digress briefly to discuss an upper bound for the function $\prod_{\sigma = 1}^s t_\sigma!/t_\sigma^{t_\sigma}$, which is stronger than \eqref{Eq:PBound}. For $x>0$, let 
\[
f(x) \coloneqq \frac{\Gamma(x+1)}{x^x},
\]
where $\Gamma(x) = \int_{0}^{\infty} t^{x-1} e^{-t} d t$
is the well-known $\Gamma$--function.
Observe that $f(t_\sigma) = t_\sigma! / t_\sigma^{t_\sigma}$
and thus
\[
\prod_{\sigma = 1}^s \frac{t_\sigma!}{t_\sigma^{t_\sigma}} = 
\prod_{\sigma = 1}^s f(t_{\sigma})\,.
\]
We argue that $f$ is strictly log-concave; that is, the function $g(x) = \log f(x)$ is strictly concave. 
First, $g$ is infinitely differentiable.
Second, 
\[
g''(x) = \psi'(x+1) \,-\, \frac{1}{x},
\]
where $\psi(x) = (\log\, \Gamma(x))' = \Gamma'(x) / \Gamma(x)$ is the so-called digamma function. By \cite[Lemma~2]{GQ}, we have
\[
\psi'(x+1) < e^{\frac{1}{x+1}} -1, \quad (x >0),
\]
while
\[
e^{\frac{1}{x+1}} -1 < \frac{1}{x},\quad (x>0)
\]
by virtue of the elementary inequality
\[
e < \left(1+\frac{1}{x}\right)^{x+1}, 
\]
which is valid for all $x > 0$. Thus
\[
g''(x) = \psi'(x+1) - \frac{1}{x} < 0, \quad (x > 0),
\]
which implies that $g$ is strictly concave, as claimed.

\noindent Now let 
\[
\Pi \coloneqq \prod_{\sigma = 1}^s f(t_{\sigma})\,.
\]
Then 
\[
\log\, \Pi = \sum_{\sigma = 1}^{s} \log f(t_{\sigma})
 \leq 
s \cdot \log\, f\left(\frac{\sum_{\sigma = 1}^{s}t_{\sigma}}{s}\right) = 
s \cdot \log\, f\left(\frac{n}{s}\right)
\]
where the inequality is a consequence of Jensen's inequality applied to the concave function $g(x) = \log f(x)$. We conclude that 
\[
\prod_{\sigma = 1}^s \frac{t_\sigma!}{t_\sigma^{t_\sigma}} = 
\prod_{\sigma = 1}^s f(t_{\sigma}) \leq 
f\left(\frac{n}{s}\right)^{s}.
\]
At this point we are interested in comparing the two bounds
\[
f\left(\frac{n}{s}\right)^s \quad \mbox{and} \quad
\left(\frac{n+s}{2n}\right)^n\,.
\]
Specifically, we will show the following.

\begin{proposition}\label{Prop:0.97}
For all $n \in \mathbb{N}$ 
and for all $s$ with $s \leq \frac{3}{4}n$, we have
\begin{equation}\label{Eq:0.97}
f\left(\frac{n}{s}\right)^s \leq
c^n\left(\frac{n+s}{2n}\right)^n\,,
\end{equation}
where 
\[
c \coloneqq \frac{8}{7} f(4/3)^{3/4} = 0.976986... \,.
\]
\end{proposition}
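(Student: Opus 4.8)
The plan is to reduce \eqref{Eq:0.97} to a one-variable inequality. Setting $x = n/s$, so that $x \ge 4/3$ precisely when $s \le \tfrac34 n$, and using $\frac{n+s}{2n} = \frac{x+1}{2x}$, taking $n$-th roots turns \eqref{Eq:0.97} into the equivalent statement
\[
f(x)^{1/x} \le c\,\frac{x+1}{2x}, \qquad x \ge 4/3.
\]
Since both sides are positive, after taking logarithms it suffices to show that
\[
\Phi(x) \coloneqq \frac{\log f(x)}{x} - \log\frac{x+1}{2x}
\]
satisfies $\Phi(x) \le \log c$ for all \emph{real} $x \ge 4/3$ (working with real $x$ is harmless, as the rationals $n/s$ with $s\le\tfrac34 n$ all lie in $[4/3,\infty)$). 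A direct evaluation at the endpoint gives $\Phi(4/3) = \log\bigl(\tfrac87 f(4/3)^{3/4}\bigr) = \log c$, so it is enough to prove that $\Phi$ is strictly decreasing on $[4/3,\infty)$.

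To this end, recall from above that $g(x) = \log f(x) = \log\Gamma(x+1) - x\log x$, whence $g'(x) = \psi(x+1) - \log x - 1$ with $\psi$ the digamma function. A short computation gives $\Phi'(x) = \Lambda(x)/x^2$, where
\[
\Lambda(x) \coloneqq x\,\psi(x+1) - \log\Gamma(x+1) - x + \frac{x}{x+1}.
\]
Differentiating once more and using $\tfrac{d}{dx}\log\Gamma(x+1) = \psi(x+1)$, the digamma terms cancel and one is left with
\[
\Lambda'(x) = x\,\psi'(x+1) - 1 + \frac{1}{(x+1)^2}.
\]
Everything then hinges on a sufficiently sharp upper bound for the trigamma function $\psi'(x+1)$.

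The bound $\psi'(x+1) < 1/x$ used earlier in the excerpt is \emph{not} strong enough here; instead I would invoke the series representation $\psi'(x+1) = \sum_{j\ge 1}(x+j)^{-2}$ together with the midpoint estimate for the convex, decreasing summand, which yields
\[
\psi'(x+1) < \int_{1/2}^{\infty}\frac{dt}{(x+t)^2} = \frac{1}{x+\tfrac12}.
\]
Feeding this into $\Lambda'$ gives $\Lambda'(x) < \frac{2x}{2x+1} - 1 + \frac{1}{(x+1)^2} = -\frac{1}{2x+1} + \frac{1}{(x+1)^2}$, which is negative for every $x>0$ because $(x+1)^2 = x^2 + 2x + 1 > 2x + 1$. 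Hence $\Lambda$ is strictly decreasing on $(0,\infty)$. A direct numerical evaluation (using $\psi(7/3)$ and $\Gamma(7/3) = \tfrac{4}{9}\Gamma(1/3)$) then shows $\Lambda(4/3) < 0$; since $\Lambda$ is decreasing, $\Lambda(x) < 0$ for all $x \ge 4/3$, so $\Phi'(x) = \Lambda(x)/x^2 < 0$ there. Thus $\Phi$ is strictly decreasing on $[4/3,\infty)$, giving $\Phi(x)\le\Phi(4/3)=\log c$, and exponentiating and raising to the $n$-th power recovers \eqref{Eq:0.97}.

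The main obstacle is the choice of trigamma bound together with the endpoint check: the threshold $s\le\tfrac34 n$ forces the comparison point $x = 4/3$, and one must verify both that $\Lambda(4/3)<0$ (which is exactly where this particular value of $c$, and hence the constant $0.9769\ldots$, enters) and that $\Lambda$ remains negative thereafter—the latter precisely requiring the refined estimate $\psi'(x+1)<\frac{1}{x+1/2}$ rather than the cruder $\psi'(x+1)<1/x$.
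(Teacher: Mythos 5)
Your proof is correct, and it follows the same overall strategy as the paper's: substitute $x=n/s$ to reduce \eqref{Eq:0.97} to a one-variable inequality on $[4/3,\infty)$, observe that equality holds at the endpoint $x=4/3$ (which is exactly where the value of $c$ comes from), and then prove monotonicity of the relevant difference. The execution of the monotonicity step, however, is genuinely different. The paper works with $H(y)=\log f(y)-y\left[\log\left(\tfrac{y+1}{2}\right)+\log c\right]$, which in your notation is $x\left[\Phi(x)-\log c\right]$, and disposes of $H'$ in one stroke using the digamma bound $\psi(y+1)<\log(y+1)-\tfrac{1}{2(y+1)}$ from \cite{GQ}; this reduces everything to the sign of a linear polynomial in $y$ whose coefficients involve $\log c$. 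You instead divide by $x$ first, which forces you one derivative deeper: you must show $\Lambda'<0$ via the trigamma bound $\psi'(x+1)<\tfrac{1}{x+1/2}$ (and you are right that the cruder $\psi'(x+1)<1/x$ from the log-concavity argument earlier in the section would give $\Lambda'(x)<(x+1)^{-2}>0$ and hence fails), and then separately verify $\Lambda(4/3)<0$ numerically. Both routes require one numerical check --- yours of $\Lambda(4/3)\approx -0.112$, the paper's that $\tfrac{2\log(2/c)-1}{2-2\log(2/c)}\approx 0.763<4/3$ --- so neither is more elementary, though the paper's is slightly shorter because it never differentiates twice. Your intermediate computations all check out: $\Phi'(x)=\Lambda(x)/x^2$, $\Lambda'(x)=x\,\psi'(x+1)-1+(x+1)^{-2}$, the midpoint estimate applied to the strictly convex summands of $\psi'(x+1)=\sum_{j\geq 1}(x+j)^{-2}$, and the elementary inequality $(x+1)^2>2x+1$ for $x>0$.
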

\begin{proofof}
Proving the above inequality is tantamount
to proving that $f(y) \leq c^y \left(\frac{y+1}{2y}\right)^y$
for all $y \geq 4/3$,
where we have made the substitution $y \coloneqq n/s$.
It will clearly suffice to show that
$\log f(y) \leq 
y \left[\log\left(\frac{y+1}{2y}\right) +\log c\right]$.
So let 
\begin{align*}
H(y) &\,\coloneqq\, \log\, f(y) - y \left[\log\left(\frac{y+1}{2y}\right) +\log c\right] \\[1.5mm]
 & = \log\, \Gamma(y+1) \,-\, y \,\log\, y \,-\, y \left[\log\left(\frac{y+1}{2y}\right) +\log c\right] \\[1.5mm]
 & = \log\, \Gamma(y+1) \,-\, y \left[\log\left(\frac{y+1}{2}\right) +\log c\right]
\end{align*}
for $y \geq 4/3$, and observe that $H(4/3) = 0$.
Thus, the claim will have been established,
provided that we can show that $H'(y) \leq 0$ 
for $y \geq 4/3$. Now observe that
\[
H'(y) = \psi(y+1) - \log\left(\frac{y+1}{2}\right) - \log c - \frac{y}{y+1}\,.
\]
By Lemma 1 in \cite{GQ}, we have 
\[
\psi(y+1) < \log(y+1) \,-\, \frac{1}{2(y+1)}\quad (y > 0).
\]
Hence, 
\begin{align}
H'(y) \,&<\, \log(y+1) \,-\, \frac{1}{2(y+1)} \,-\, \log\left(\frac{y+1}{2}\right) \,-\,
\log c -\frac{y}{y+1} \\
 \,& = \, 
\frac{y\left[2 \log (2/c) - 2\right] + 2 \log (2/c) - 1}{2(y+1)}.
\end{align}
We note that the numerator $N$ in the last expression is negative, since $N\geq 0$ would imply that
\[
y \leq \frac{2 \log (2/c) - 1}{2-2 \log (2/c)} \,\approx\, 
0.763233 < \frac{4}{3} \, ,
\]
contradicting our assumption that $y\geq 4/3$, while the denominator is positive.
Thus $H'(y) \leq 0$, which is what we wanted to prove.
\end{proofof}

\begin{remark}
It is clear that we have equality in~\eqref{Eq:0.97}
if and only if $n = (4/3)s$ since $H(4/3) = 0$
and $H'(y)$ is in fact strictly negative for $y > 4/3$.
Moreover, if $s$ remains bounded and thus $y = n/s$ grows without bound, $f(n/s)^s$
becomes an even better bound than $\left(\frac{n+s}{2n}\right)^n$ since
\[
f(n/s)^s \sim \frac{\left(2 \pi \frac{n}{s}\right)^{s/2}}{e^n}
\quad\quad \text{while}
\quad\quad
\left(\frac{n+s}{2n}\right)^n \sim \frac{e^s}{2^n},
\]
where the first relation follows from Stirling's 
approximation.
\end{remark}


\subsection{Subgroups, homomorphic images, and the function $P_G(-)$}
\label{Sec:SubHomP}
\noindent From now on, we shall concentrate on the case where $H = K$; that is, we shall focus on the functions of one variable $P_G(-)$ and $\mathrm{tp}(-)$. Our aim in this subsection is to  show that $P_G(-)$ behaves well with respect to subgroups and homomorphic images. 

\begin{proposition}
\label{Prop:PGSub}
Let $K \leq H \leq G$ be finite groups. Then every connected component of $\Gamma^H_K$ is also a connected component of $\Gamma^G_K;$ in particular, $\Gamma^H_K$ is an induced subgraph of $\Gamma^G_K,$ and we have 
\begin{equation}
\label{Eq:PGSub}
P_G(K) \leq P_H(K).
\end{equation}
\end{proposition}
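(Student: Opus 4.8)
The plan is to exploit the double-coset description of connected components furnished by Lemma~\ref{Lem:sDet}, together with the elementary fact that, since $K \leq H$, the subgroup $H$ is a union of $(K,K)$-double cosets of $G$.

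First I would record the containment of vertex sets. The left $K$-cosets occurring in $\Gamma^H_K$ are precisely those cosets $gK$ with $gK \subseteq H$, i.e.\ with $g \in H$, and symmetrically for the right cosets; hence they form a subset of the vertices $G/K \sqcup K\backslash G$ of $\Gamma^G_K$. Moreover an edge is present in either graph exactly when the two cosets meet, and the intersection $gK \cap Kg'$ is the \emph{same} subset of $G$ whether the cosets are regarded inside $H$ or inside $G$. Consequently $\Gamma^H_K$ is an induced subgraph of $\Gamma^G_K$.

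The heart of the argument is to upgrade \textquotedblleft induced subgraph\textquotedblright\ to \textquotedblleft union of full connected components\textquotedblright. By Lemma~\ref{Lem:sDet}, a connected component $\Delta$ of $\Gamma^H_K$ corresponds to a $(K,K)$-double coset $KhK$ with $h \in H$, and $V(\Delta)$ consists of all left and right $K$-cosets contained in $KhK$. Because $K \leq H$, we have $KhK \subseteq H$, so this same subset $KhK$ is also a $(K,K)$-double coset of $G$ (the set $\{k_1 h k_2 : k_1, k_2 \in K\}$ does not depend on the ambient group). Applying Lemma~\ref{Lem:sDet} to $\Gamma^G_K$, the component of $\Gamma^G_K$ attached to $KhK$ has vertex set consisting of all $K$-cosets contained in $KhK$; but $KhK \subseteq H$ forces every such coset to lie inside $H$, so these are exactly the vertices appearing in $V(\Delta)$. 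As both components are complete bipartite by Lemma~\ref{Lem:CompComplete}(ii), they coincide as graphs. Hence every connected component of $\Gamma^H_K$ is already a connected component of $\Gamma^G_K$, and the components of $\Gamma^G_K$ not arising this way are precisely those over double cosets $KgK$ with $g \notin H$.

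Finally I would deduce the inequality from Proposition~\ref{Prop:PFormula}. Writing $\{t_\tau\}$ for the component sizes of $\Gamma^H_K$ and $\{t_\sigma\}$ for those of $\Gamma^G_K$, the previous paragraph shows that $\{t_\tau\}$ is a sub-multiset of $\{t_\sigma\}$, whence
\[
P_G(K) = \prod_\sigma \frac{t_\sigma!}{t_\sigma^{t_\sigma}} = \left(\prod_\tau \frac{t_\tau!}{t_\tau^{t_\tau}}\right)\prod_{KgK \not\subseteq H} \frac{t_\sigma!}{t_\sigma^{t_\sigma}} = P_H(K)\cdot \prod_{KgK \not\subseteq H}\frac{t_\sigma!}{t_\sigma^{t_\sigma}}.
\]
Since $t!/t^t \leq 1$ for every positive integer $t$, the trailing product is at most $1$, giving $P_G(K) \leq P_H(K)$. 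The only point requiring genuine care is the middle step: confirming that the component of $\Gamma^G_K$ sitting over a double coset $KhK \subseteq H$ does not acquire extra vertices outside $H$. This is resolved cleanly by the double-coset bijection of Lemma~\ref{Lem:sDet}, which confines the component to the set $KhK$; the remaining manipulations are purely bookkeeping.
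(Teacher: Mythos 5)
Your proof is correct and follows essentially the same route as the paper: both arguments rest on the observation that a $K$-coset meeting $H$ must lie inside $H$, so the components of $\Gamma^H_K$ cannot leak outside $H$ and are therefore full components of $\Gamma^G_K$, after which the inequality follows from Proposition~\ref{Prop:PFormula} because each factor $t!/t^t$ is at most $1$. The only cosmetic difference is that you package the key step via the double-coset correspondence of Lemma~\ref{Lem:sDet}, whereas the paper argues directly that any edge of $\Gamma^G_K$ with one endpoint in $H/K$ has its other endpoint in $K\backslash H$; both justifications are sound.
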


\begin{proofof}
First, it is clear that $\Gamma^H_K$ is a subgraph of $\Gamma^G_K$. Second, we observe that
\[
hK \cap Kg\,\neq\,\emptyset\,\Longrightarrow\, g\in H,\quad (h\in H,\, g\in G),
\]
since, by hypothesis, there have to exist elements $k_1, k_2\in K$ such that $h k_1 = k_2g$, so that $g = k_2^{-1} h k_1\in H$, as $K\leq H$. Therefore, an edge in $\Gamma^G_K$ with one vertex in the set $H/K$ necessarily has its other bounding vertex in $K\backslash H$, which implies that the connected components of $\Gamma^H_K$ are \emph{identical} with certain components of $\Gamma^G_K$. Third, we obviously have
\[
s_H \coloneqq \vert K\backslash H/K\vert \leq \vert K\backslash G/K\vert = : s_G.
\]
Thinking of the connected components of $\Gamma^H_K$ as being listed first among the components of $\Gamma^G_K$, we now find that 
\begin{equation*}
 P_G(K) = \prod_{\sigma = 1}^{s_G} \frac{(t_\sigma)!}{t_\sigma^{t_\sigma}}
 = 
 \prod_{\sigma = 1}^{s_H}\frac{(t_\sigma)!}{t_\sigma^{t_\sigma}} \times \prod_{\sigma = s_H + 1}^{s_G}\frac{(t_\sigma)!}{t_\sigma^{t_\sigma}} 
\leq \prod_{\sigma = 1}^{s_H}\frac{(t_\sigma)!}{t_\sigma^{t_\sigma}} = P_H(K),
\end{equation*}
whence \eqref{Eq:PGSub}. 
\end{proofof}

\noindent Our next result concerns the connection of $P_G$ with homomorphic images. 

\begin{theorem}
\label{Thm:PHom}
Let $G$ and $K$ be finite groups, $f: G \rightarrow K$ a group homomorphism, and let $H \leq G$ be a subgroup. Then, if $N \coloneqq \mathrm{ker}(f) \leq H,$ we have
\[
P_G(H) = P_{f(G)}(f(H)).
\]
\end{theorem}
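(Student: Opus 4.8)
The plan is to reduce everything to the formula of Proposition~\ref{Prop:PFormula}, which expresses $P_G(H)$ purely in terms of the multiset of block sizes $\{t_\sigma\}$ arising from the connected components of the coset intersection graph. Since this formula depends only on the $t$-vector of $H$ in $G$, it suffices to show that passing to the quotient $f(G)$ with the subgroup $f(H)$ produces \emph{exactly the same} multiset of block sizes. By Lemma~\ref{Lem:HKWeight}(b), $t_\sigma = (H : H\cap H^{g_\sigma})$ as $g_\sigma$ ranges over double coset representatives of $H\backslash G/H$; the corresponding quantity in $\bar G := f(G)$ is $(\bar H : \bar H \cap \bar H^{\bar g})$, where $\bar H = f(H)$ and $\bar g = f(g)$. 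So the whole statement will follow once I establish a correspondence between the $(H,H)$-double cosets of $G$ and the $(\bar H,\bar H)$-double cosets of $\bar G$ that preserves these indices.

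First I would set up the standard correspondence. Since $N = \ker f \leq H$, the map $f$ induces a bijection between subgroups of $G$ containing $N$ and subgroups of $\bar G$, and in particular $f^{-1}(\bar H) = H$. The first key step is to verify that $f$ induces a bijection
\[
H\backslash G/H \;\longrightarrow\; \bar H\backslash \bar G/\bar H,\qquad HgH \longmapsto \bar H\,\bar g\,\bar H,
\]
which is well-defined and bijective precisely because $N\leq H$ forces $f^{-1}(\bar H \bar g \bar H) = HgH$ (the preimage of a double coset is a single double coset, again using $N\leq H$). This already gives $s = \bar s$, i.e.\ the two graphs have the same number of components.

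The second, and genuinely substantive, step is to check that the block size is preserved along this bijection: for $\bar g = f(g)$ I must show
\[
(H : H\cap H^{g}) \;=\; (\bar H : \bar H \cap \bar H^{\bar g}).
\]
The clean way to see this is to note that $f$ maps the coset space $H/(H\cap H^g)$ onto $\bar H/(\bar H \cap \bar H^{\bar g})$, and that this map is a bijection because $N \leq H \cap H^g$. Indeed $N\leq H$ by hypothesis, and $N\leq H^g$ since $N$ is normal in $G$ (being a kernel) and hence $N = N^g \leq H^g$; therefore $N \leq H\cap H^g$, so the three relevant subgroups $H$, $H^g$, and $H\cap H^g$ all contain $N$, and $f$ restricted to them behaves like the canonical projection $G\to G/N$ followed by an isomorphism onto $\bar G$. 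In particular $f(H^g) = f(H)^{f(g)} = \bar H^{\bar g}$ and $f(H\cap H^g) = \bar H \cap \bar H^{\bar g}$ (the latter equality again using $N\leq H\cap H^g$, so that no collapsing occurs beyond quotienting by $N$). Combining these two steps, the multiset $\{t_\sigma\}$ computed in $G$ coincides with the one computed in $\bar G$, and Proposition~\ref{Prop:PFormula} yields $P_G(H) = P_{\bar G}(\bar H)$.

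I expect the main obstacle to be the careful justification of $f(H\cap H^g) = \bar H \cap \bar H^{\bar g}$: the inclusion $f(H\cap H^g)\subseteq \bar H\cap\bar H^{\bar g}$ is automatic, but the reverse requires that every element of $\bar H\cap\bar H^{\bar g}$ lift to an element of $H\cap H^g$, which is exactly where the hypothesis $N\leq H$ (and the normality of $N$ giving $N\leq H^g$) is essential—without it the index could drop under $f$. Once this lifting is in place the index equality is immediate from the correspondence theorem, and the rest is a direct appeal to the already-proved product formula.
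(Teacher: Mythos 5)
Your proof is correct, but it takes a genuinely different route from the paper. The paper works directly at the level of transversals: it defines a map $\Phi:\mathrm{DT}_G(H)\to\mathrm{DT}_{f(G)}(f(H))$ by applying $f$ elementwise to a two-sided transversal, checks that the image is again a two-sided transversal, proves surjectivity by lifting, and then shows every fibre has size exactly $\vert N\vert^{n}$, which yields $\vert\mathrm{DT}_{f(G)}(f(H))\vert=\vert\mathrm{DT}_G(H)\vert/\vert N\vert^{n}$ and hence the equality of probabilities. You instead reduce to the combinatorial data feeding Proposition~\ref{Prop:PFormula}: a bijection $HgH\mapsto \bar H\bar g\bar H$ on double cosets (valid since $N\leq H$ forces $f^{-1}(\bar H\bar g\bar H)=HgH$), together with preservation of the block sizes $t_\sigma=(H:H\cap H^{g_\sigma})$ via $f(H\cap H^{g})=\bar H\cap\bar H^{\bar g}$ and the correspondence theorem; your identification of the potential sticking point is exactly right, and your lifting argument (writing $\bar y=f(h)=f((h')^{g})$, so $h((h')^{g})^{-1}\in N\leq H^{g}$ and hence $h\in H\cap H^{g}$) closes it, using precisely that $N=N^{g}\leq H^{g}$. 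Your route is shorter because it leans on machinery already established (Proposition~\ref{Prop:PFormula} and Lemma~\ref{Lem:HKWeight}(b)); the paper's route is more self-contained and buys the extra, slightly stronger fact about the exact fibre count $\vert N\vert^{n}$, which an equality of the $t$-vectors alone does not immediately give.
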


\begin{proofof}
Our aim is to define and analyse a certain map 
\[
\Phi: \mathrm{DT}_G(H) \coloneqq \mathrm{DT}_G(H, H)\,\longrightarrow\, \mathrm{DT}_{f(G)}(f(H)).
\]
Suppose that $(G:H) = n$, and let $T = \{t_1, t_2, \ldots, t_n\}$ be a two-sided transversal for $H$ in $G$. Applying the homomorphism $f$, we obtain what looks at first like a multiset $\overline{T} = \{f(t_1), f(t_2), \ldots, f(t_n)\} \subseteq K$, and we claim that $\overline{T}$ is in fact a two-sided transversal for $f(H)$ in $f(G)$; in particular, $\overline{T}$ is an $n$-set. Suppose first that 
\[
f(t_{\nu_1} H) \cap f(t_{\nu_2} H) = f(t_{\nu_1}) f(H) \cap f(t_{\nu_2}) f(H) \,\neq\, \emptyset, \quad (\nu_1, \nu_2 \in [n]).
\]
Then there exist elements $h_1, h_2 \in H$, such that $f(t_{\nu_1} h_1) = f(t_{\nu_2} h_2)$, or $f(t_{\nu_1} h_1 h_2^{-1} t_{\nu_2}^{-1}) = 1$. Thus, there are elements $n, n'\in N$, such that 
\[
t_{\nu_2} = n^{-1} t_{\nu_1} h_1 h_2^{-1} = t_{\nu_1} n' h_1 h_2^{-1} \,\in t_{\nu_1} H,
\]
as $N \leq H$ by hypothesis. This forces $\nu_1 = \nu_2$, since $T$ is a left transversal for $H$ in $G$. Since $(f(G): f(H)) \leq n$, it follows that $\overline{T}$ is a complete set of pairwise inequivalent representatives for the left cosets of $f(H)$ in $f(G)$. Similarly, if 
\[
f(H t_{\nu_1}) \cap f(H t_{\nu_2}) = f(H) f(t_{\nu_1}) \cap f(H) f(t_{\nu_2})\,\neq\, \emptyset,\quad (\nu_1, \nu_2 \in [n]),
\]
we find that $f(h_1 t_{\nu_1} t_{\nu_2}^{-1} h_2^{-1}) = 1$ for some $h_1, h_2\in H$, or 
\[
t_{\nu_1} t_{\nu_2}^{-1} = h_1^{-1} n h_2 \in H,\quad (n\in N),
\]
again implying $\nu_1 = \nu_2$, so that $\overline{T}$ is also a right transversal for $f(H)$ in $f(G)$; therefore, $\overline{T} \in \mathrm{DT}_{f(G)}(f(H))$.

\noindent We now set $\Phi(T) \coloneqq \overline{T}$, to obtain a well-defined map from $\mathrm{DT}_G(H)$ to $\mathrm{DT}_{f(G)}(f(H))$, and claim that $\Phi$ is surjective. In order to justify this claim, let $\{f(s_\nu)\}_{1\leq \nu\leq n} \in \mathrm{DT}_{f(G)}(f(H))$, pick arbitrary pre-images $\widetilde{s}_\nu \in f^{-1}(f(s_\nu))$ for $1\leq \nu\leq n$, and form the set $\widetilde{S} = $ \linebreak $\{\widetilde{s}_\nu: 1\leq \nu\leq n\}$. Then $\widetilde{S} \in \mathrm{DT}_G(H)$. Indeed, if $\widetilde{s}_{\nu_1}^{-1} \widetilde{s}_{\nu_2} \in H$, then
\[
f(\widetilde{s}_{\nu_1}^{-1} \widetilde{s}_{\nu_2}) = f(\widetilde{s}_{\nu_1})^{-1} f(\widetilde{s}_{\nu_2}) = f(s_{\nu_1})^{-1} f(s_{\nu_2}) \,\in\, f(H),
\]
so that $\nu_1 = \nu_2$. A similar argument works for right cosets and, by construction, 
\[
\Phi(\widetilde{S}) = \left\{f(\widetilde{s}_\nu)\right\}_{1\leq \nu\leq n} = \left\{f(s_\nu)\right\}_{1\leq \nu \leq n},
\]
whence our claim. 

\noindent We observe that, as $N = \mathrm{ker}(f) \leq H$, if $T = \{t_\nu\}_{1\leq \nu\leq n} \in \mathrm{DT}_G(H)$, so is $T' = \{m_\nu t_\nu\}_{1\leq \nu\leq n}$, where $m_1, \ldots, m_n\in N$, as $H m_\nu t_\nu = H t_\nu$ and $m_\nu t_\nu H = t_\nu m_\nu' H = t_\nu H$, where $m_\nu' \in N$. Moreover, we have $\Phi(T) = \Phi(T')$, since $f(m_\nu t_\nu) = f(t_\nu)$. Consequently, for each $\overline{T}\in \mathrm{DT}_{f(G)}(f(H))$, there are at least $\vert N\vert^n$ distinct pre-images under $\Phi$. Actually, we have 
\[
\vert \Phi^{-1}(\overline{T})\vert = \vert N\vert^n,\quad (\overline{T}\in \mathrm{DT}_{f(G)}(f(H))).
\]
Namely, if $S = \{s_\nu\}_{1\leq \nu\leq n} \in \mathrm{DT}_G(H)$ with $\Phi(S) = \Phi(T)$, where $T \in \mathrm{DT}_G(H)$ is a given two-sided transversal then, after appropriate rearrangement of indices, we have $f(s_\nu) = f(t_\nu)$ for $1\leq \nu\leq n$, thus
\[
s_\nu = m_\nu t_\nu,\quad (1\leq \nu\leq n),
\]
where $m_\nu\in N$, as claimed. Summarising our findings so far, we have shown that 
\[
\big\vert \mathrm{DT}_{f(G)}(f(H))\big\vert = \frac{\vert \mathrm{DT}_G(H)\vert}{\vert N\vert^n}.
\]
It follows that
\begin{equation*}
P_{f(G)}(f(H)) = \frac{\vert \mathrm{DT}_{f(G)}(f(H))\vert}{\vert f(H)\vert^n} = \frac{\vert \mathrm{DT}_G(H)\vert/\vert N\vert^n}{\vert H/N\vert^n} 
 = \frac{\vert \mathrm{DT}_G(H)\vert}{\vert H\vert^n} = P_G(H),
\end{equation*}
completing the proof.
\end{proofof}

\begin{corollary}\label{cor:iso}
Let $G$ be a finite group, let $H$ be a subgroup of $G,$ and let $f: G \rightarrow K$ be an isomorphism.
Then we have $P_G(H) = P_K(f(H)),$ 
in particular, $\mathrm{tp}(G) = \mathrm{tp}(K);$ that is, $\mathrm{tp}(-)$ is an isomorphism invariant.
\end{corollary}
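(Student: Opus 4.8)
The plan is to obtain both assertions as immediate consequences of Theorem~\ref{Thm:PHom}. First I would observe that since $f\colon G \to K$ is an isomorphism, its kernel $N = \mathrm{ker}(f) = \{1\}$ is trivial and hence contained in every subgroup of $G$; in particular $N \leq H$, so the hypothesis of Theorem~\ref{Thm:PHom} is satisfied. Applying that theorem then yields $P_G(H) = P_{f(G)}(f(H))$. Because $f$ is surjective we have $f(G) = K$, and therefore $P_G(H) = P_K(f(H))$, which is the first claim.

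For the invariance of $\mathrm{tp}(-)$ under isomorphism, I would use that $f$ restricts to a bijection $H \mapsto f(H)$ between the subgroup lattices of $G$ and $K$ (with inverse $L \mapsto f^{-1}(L)$). Combining this bijection with the identity $P_G(H) = P_K(f(H))$ just established, the two families $\{P_G(H) : H \leq G\}$ and $\{P_K(L) : L \leq K\}$ coincide as multisets, so in particular their minima agree:
\[
\mathrm{tp}(G) = \min_{H \leq G} P_G(H) = \min_{H \leq G} P_K(f(H)) = \min_{L \leq K} P_K(L) = \mathrm{tp}(K).
\]

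There is essentially no obstacle here: the entire content is packaged in Theorem~\ref{Thm:PHom}, and the only points needing (trivial) verification are that the kernel condition $N \leq H$ holds automatically for an isomorphism and that $H \mapsto f(H)$ is a bijection of the subgroup lattices. The latter is standard for any group isomorphism, so the corollary follows at once.
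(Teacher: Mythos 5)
Your proposal is correct and is exactly the argument the paper intends: the corollary is stated immediately after Theorem~\ref{Thm:PHom} with no separate proof, precisely because the trivial kernel satisfies $\mathrm{ker}(f) \leq H$ automatically and $H \mapsto f(H)$ is a bijection of subgroup lattices. Nothing is missing.
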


\noindent Taking $f: G \rightarrow G/N$ the canonical projection, where $N$ is a normal subgroup of $G$, Theorem~\ref{Thm:PHom} gives the following.

\begin{corollary}\label{cor:normal}
Let $G$ be a finite group, and let $N \leq H$ 
be subgroups of $G$ with $N$ normal in $G$. 
Then $P_G(H) = P_{G/N}(H/N)$.
\end{corollary}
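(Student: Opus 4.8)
The plan is to deduce Corollary~\ref{cor:normal} directly from Theorem~\ref{Thm:PHom} by choosing the homomorphism appropriately. Take $f\colon G \to G/N$ to be the canonical projection. Since $N$ is normal in $G$, this is a well-defined group homomorphism with $\ker(f) = N$. By hypothesis $N \leq H$, so the hypothesis $\ker(f) \leq H$ of Theorem~\ref{Thm:PHom} is satisfied, and the theorem immediately yields $P_G(H) = P_{f(G)}(f(H))$.

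The remaining task is purely bookkeeping: I must identify the objects on the right-hand side. Since $f$ is surjective, $f(G) = G/N$. Likewise $f(H) = \{hN : h \in H\} = H/N$, which makes sense precisely because $N \leq H$ (so $H/N$ is a genuine subgroup of $G/N$). Substituting these identifications into the conclusion of Theorem~\ref{Thm:PHom} gives $P_G(H) = P_{G/N}(H/N)$, exactly as asserted.

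There is no real obstacle here, as the statement is an immediate specialisation of the preceding theorem; the only point requiring a moment's care is checking that the hypotheses of Theorem~\ref{Thm:PHom} transfer correctly, namely that the canonical projection has kernel exactly $N$ and that $N \leq H$ guarantees $\ker(f) \leq H$ so that the theorem applies verbatim. Consequently the proof is a single invocation of Theorem~\ref{Thm:PHom} together with the observations $f(G) = G/N$ and $f(H) = H/N$.
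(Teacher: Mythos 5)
Your proposal is correct and coincides with the paper's own argument: the authors likewise obtain this corollary by applying Theorem~\ref{Thm:PHom} to the canonical projection $f\colon G \to G/N$, whose kernel $N$ is contained in $H$ by hypothesis. The identifications $f(G) = G/N$ and $f(H) = H/N$ are exactly the routine bookkeeping the paper leaves implicit.
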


\section{The common transversal probability $\mathrm{tp}(-)$ of a group.}\label{Sec:tp}
We now turn our attention to the invariant  $\mathrm{tp}(-)$, defined as  \[
\mathrm{tp}(G) \coloneqq \min_{H\leq G}\, P_G(H).
\]
The main objective of the section is to relate the values of $\mathrm{tp}(G)$ to key properties for the group $G$. Roughly  this means that the larger $\mathrm{tp}(G)$ is, the more normal structure $G$ exhibits, in particular, we show that solubility, supersolubility and nilpotency are characterised by certain values of $\mathrm{tp}(G)$. We also compute the common transversal probability of certain groups and
show that specific values of $\mathrm{tp}$  are achieved by specific groups while others are not attained at all.  We also show that $\mathrm{tp}$ behaves well with respect to subgroups, quotients and group extensions.

\subsection{The function $\mathrm{tp}(-)$ and group structure}\label{Sec:GrpStructure}
\noindent Our first result  records the behaviour of $\mathrm{tp}(-)$ under taking subgroups, quotients, and sections. 

\begin{proposition}\label{Prop:structproperties}
Let $G$ be a finite group.
\vspace{-2mm}
\begin{enumerate}
\item[(i)] For every proper subgroup $H$ of $G$ we have $\mathrm{tp}(G) \leq \mathrm{tp}(H)$.
\vspace{1mm}
\item[(ii)] For every normal subgroup $N$ of $G$ we have $\mathrm{tp}(G) \leq \mathrm{tp}(G/N)$.
\vspace{1mm}
\item[(iii)] For every section $X$ of $G$ we have $\mathrm{tp}(G) \leq \mathrm{tp}(X)$.
\vspace{1mm}
\item[(iv)] If $G$ is a non-Dedekind $p$-group, then $\mathrm{tp}(G) \leq p!/p^p$.
\end{enumerate}
\end{proposition}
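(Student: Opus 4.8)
The plan is to obtain (i)--(iii) as purely formal consequences of the monotonicity results already proved for $P_G(-)$, and to reduce (iv) to the divisibility constraint $t_\sigma \mid |H|$ combined with the monotonicity of the function $t \mapsto t!/t^t$. For (i), I would pick a subgroup $K \leq H$ at which the (attained, since $H$ is finite) minimum $\mathrm{tp}(H) = P_H(K)$ is realized. Since $K \leq H \leq G$, Proposition~\ref{Prop:PGSub} gives $P_G(K) \leq P_H(K)$, and therefore $\mathrm{tp}(G) \leq P_G(K) \leq P_H(K) = \mathrm{tp}(H)$. For (ii), I would use the correspondence theorem to write a minimizing subgroup of $G/N$ as $H/N$ with $N \leq H \leq G$, so that $\mathrm{tp}(G/N) = P_{G/N}(H/N)$; Corollary~\ref{cor:normal} then yields $P_G(H) = P_{G/N}(H/N)$, whence $\mathrm{tp}(G) \leq P_G(H) = \mathrm{tp}(G/N)$.

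Part (iii) I would deduce by composing (i) and (ii). Writing the section as $X = H/N$ with $N \unlhd H \leq G$, the case $H = G$ is exactly (ii); otherwise $H$ is a proper subgroup, and chaining (i) with the instance of (ii) applied to the group $H$ and its normal subgroup $N$ gives $\mathrm{tp}(G) \leq \mathrm{tp}(H) \leq \mathrm{tp}(H/N) = \mathrm{tp}(X)$.

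For (iv), the key observation is that an \emph{arbitrary} non-normal subgroup $H$ of $G$ already satisfies $P_G(H) \leq p!/p^p$, so no delicate choice is required. Since $G$ is non-Dedekind such an $H$ exists, and by Proposition~\ref{Prop:PFormula} we have $P_G(H) = \prod_{\sigma = 1}^s t_\sigma!/t_\sigma^{t_\sigma}$ with every $t_\sigma \mid |H|$. By Corollary~\ref{Cor:P = 1} not all $t_\sigma$ equal $1$ (otherwise $H \unlhd G$), so some component satisfies $t_\sigma \geq 2$; as $|H|$ is a power of $p$, each $t_\sigma$ is a power of $p$, forcing that exceptional factor to have $t_\sigma \geq p$. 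Because $t \mapsto t!/t^t$ is strictly decreasing on the positive integers (one checks $\frac{(t+1)!/(t+1)^{t+1}}{t!/t^t} = (t/(t+1))^t < 1$), this factor is at most $p!/p^p$, while every remaining factor is at most $1$; hence $P_G(H) \leq p!/p^p$ and a fortiori $\mathrm{tp}(G) \leq p!/p^p$.

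I expect no real obstacle in (i)--(iii): these are bookkeeping on top of Proposition~\ref{Prop:PGSub} and Corollary~\ref{cor:normal}, the only point needing care being that the minima defining $\mathrm{tp}$ are realized at genuine subgroups of the correct ambient group and that the lattice correspondence $\overline{H} \leftrightarrow H$ is tracked correctly. The substantive content sits in (iv), and there the crucial insight is that the prime-power divisibility $t_\sigma \mid |H|$ pins the smallest non-trivial component size at $\geq p$; once this is noticed, the strict monotonicity of $t!/t^t$ caps $P_G(H)$ by $p!/p^p$ immediately.
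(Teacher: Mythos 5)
Your proposal is correct and follows essentially the same route as the paper: (i)--(iii) are the same bookkeeping on top of Proposition~\ref{Prop:PGSub} and Corollary~\ref{cor:normal} (your explicit handling of the case $H=G$ in (iii) is a small tidiness the paper omits), and (iv) is the same argument via $t_\sigma \mid |H|$ forcing a non-trivial component of size at least $p$, the only cosmetic difference being that you invoke Corollary~\ref{Cor:P = 1} where the paper uses $m=(N_G(H):H)<n$ to produce that component. No gaps.
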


\begin{proofof}
(i) Let $ K \leq H $ be such that $\mathrm{tp}(H) = P_H(K)$. Then, according to 
Proposition~\ref{Prop:PGSub}, we have $P_G(K) \leq P_H(K)$. Hence, 
\[
\mathrm{tp}(G) \leq P_G(K) \leq P_H(K) = \mathrm{tp}(H).
\]
(ii) In view of Corollary~\ref{cor:normal} we have 
\[
\mathrm{tp}(G/N) = \min_{N \leq H\leq G}\, P_{G/N}(H/N) = \min_{N \leq H\leq G} \,P_G(H) \geq \mathrm{tp}(G).
\]
(iii) Let $X = H/N$, where $N \unlhd H \leq G$. Then, by Parts~(i) and (ii), 
\[
\mathrm{tp}(G) \leq \mathrm{tp}(H) \leq\mathrm{tp}(H/N) = \mathrm{tp}(X),
\]
whence our result.

\noindent (iv) Suppose that $H\leq G$ is not normal in $G$, and that $(G:H) = n$. Then the number $m$ of trivial connected components of the graph $\Gamma^G_H$ satisfies $m = (N_G(H):H) < n$. Thus, there exists at least one non-trivial component $\Delta_\sigma \cong K_{t_\sigma, t_\sigma}$ in $\Gamma^G_H$, and we have $t_\sigma \geq p$, since $t_\sigma \big\vert \vert H\vert$ by Proposition~\ref{Prop:PFormula}. Hence, by \eqref{Eq:PFormula}, 
\[
\mathrm{tp}(G) \,\leq \, P_G(H) = \prod_{\sigma = 1}^s t_\sigma!/t_\sigma^{t_\sigma} \,\leq \, p!/p^p.
\]
The proof is complete.
\end{proofof}

\noindent We next give the common transversal probabilities of various groups, which will be needed later on. 

\begin{lemma}
\label{Lem:A4-A5}
For an odd prime $p$ and any integer $n \geq 1,$ let 
\begin{equation}
\label{Eq:CpC2^n}
C_p \rtimes C_{2^n} = 
\left\langle a, b \,\big\vert\, a^p = b^{2^n} = 1,\, a^b = a^{-1}\right\rangle, 
\end{equation}
and let $D_n$ be the dihedral group of order $2n$. Then
\vspace{-2.5mm} 
\begin{enumerate}
\item[(i)] $\mathrm{tp}(C_p \rtimes C_{2^n}) = \left(\frac{1}{2}\right)^{\frac{p-1}{2}},$
\vspace{1.5mm}
\item[(ii)] $\mathrm{tp}(D_n) = \begin{cases} 
(\frac{1}{2})^{\frac{n-1}{2}} \text{ if $n$ is odd}, \\[1mm]
(\frac{1}{2})^{\frac{n-2}{2}} \text{ if $n$ is even}, 
\end{cases}$
\vspace{1.5mm}
\item[(iii)] $\mathrm{tp}(Q_{16}) = \mathrm{tp}(C_4 \rtimes C_4) = 1/2$,
\vspace{1.5mm}
\item[(iv)] $\mathrm{tp} (A_4) = 2/9$,
\vspace{1.5mm}
\item[(v)] $\mathrm{tp}(A_5) = (1/2)^{14}$,
\vspace{1.5mm}
\item[(vi)] $\mathrm{tp}(\mathrm{PSL_3(2)}) = (1/2)^{40}$,
\vspace{1.5mm}
\item[(vii)] $\mathrm{tp}(C_2^3 \rtimes C_7) = (1/2)^{12}$,
\vspace{1.5mm}
\item[(viii)] $\mathrm{tp}(C_3^2 \rtimes C_4) = (1/2)^8$,
\vspace{1.5mm}
\item[(ix)] $\mathrm{tp}(\mathrm{SL}_2(3)) = (2/9)^2$.
\end{enumerate}
\end{lemma}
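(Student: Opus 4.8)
The plan is to compute each of the nine common transversal probabilities by the same uniform recipe: for a fixed group $G$ and each conjugacy class of non-normal subgroups $H$, determine the $t$-vector of $H$ in $G$, apply the formula $P_G(H) = \prod_{\sigma} t_\sigma!/t_\sigma^{t_\sigma}$ from Proposition~\ref{Prop:PFormula}, and then minimise over $H$ to obtain $\mathrm{tp}(G)$. By Corollary~\ref{Cor:P = 1} only non-normal subgroups matter, and by Proposition~\ref{Prop:structproperties}(iv) the small $p$-groups in (iii) can be expected to meet the bound $p!/p^p = 1/2$. For each $H$, the key input is the list of double-coset sizes: by Lemma~\ref{Lem:HKWeight}(b) the component containing $g_\sigma$ has $t_\sigma = (H : H \cap H^{g_\sigma})$, so I would compute the intersections $H \cap H^{g}$ as $g$ ranges over double-coset representatives, using $m = (N_G(H):H)$ trivial components from~\eqref{Eq:mForm} and $\sum_\sigma t_\sigma = n = (G:H)$ as a consistency check.

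For the infinite families (i) and (ii) I would argue structurally rather than case-by-case. In $C_p \rtimes C_{2^n}$ the action is Frobenius-like on the normal $C_p$, so I would identify the minimising $H$ as a conjugate of the complement $\langle b\rangle$ (or a subgroup thereof meeting $C_p$ trivially), whose nontrivial components are of type $K_{2,2}$ because inversion pairs each nontrivial coset with exactly one other; counting $(p-1)/2$ such components gives $(1/2)^{(p-1)/2}$. For $D_n$ I would take $H$ a reflection subgroup $\langle$reflection$\rangle \cong C_2$, compute $H \cap H^g$ over rotations, and separate the parity cases: when $n$ is even the extra central reflection produces one more trivial component, reducing the exponent from $(n-1)/2$ to $(n-2)/2$. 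In both families I must also verify that no other subgroup gives a smaller value, which follows since any component has $t_\sigma \geq 2$ contributing a factor $\leq 1/2$, and these subgroups realise the maximal number of such factors.

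For the remaining finite groups (iv)--(ix) the computation is a finite check over the (few) conjugacy classes of non-normal subgroups. For $A_4$ the Sylow $2$-subgroup $V_4$ is normal, so the minimum is attained at a $C_3$ or $C_2$; the order-$3$ subgroup of index $4$ with $N_G(H)=H$ gives $m=1$ and one $K_{3,3}$ component, yielding $3!/3^3 = 2/9$. For $A_5$, $\mathrm{PSL}_3(2)$, $C_2^3 \rtimes C_7$, $C_3^2 \rtimes C_4$, and $\mathrm{SL}_2(3)$ I would locate the subgroup minimising the product — typically a subgroup whose components are all of type $K_{2,2}$ so that $P_G(H) = (1/2)^{(\text{number of such components})}$ — and read off the stated power of $1/2$ or $2/9$. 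For $\mathrm{SL}_2(3)$, which has $A_4$ as a quotient and a central $C_2$, I would use Corollary~\ref{cor:normal} and Proposition~\ref{Prop:structproperties} to relate its value to that of $A_4$, explaining the square in $(2/9)^2$.

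The main obstacle will be the large cases, especially $\mathrm{PSL}_3(2)$ (order $168$, yielding the exponent $40$) and $A_5$ (exponent $14$): here one must be confident that the subgroup exhibited really minimises $P_G(H)$ across \emph{all} conjugacy classes of non-normal subgroups, not merely that it produces the claimed value. I expect this to require either a systematic enumeration of maximal and small subgroups together with their double-coset decompositions, or a computer-algebra verification; the delicate point throughout is correctly counting the number and types of connected components (equivalently, the double cosets and the orders $|H \cap H^g|$), since an error in a single $t_\sigma$ changes the exponent. The arithmetic comparison ensuring the minimum is attained where claimed — essentially that configurations with more, smaller components beat those with fewer, larger ones — is governed by Lemma~\ref{Lem:ProdEst}, which I would invoke to rule out competing subgroups without computing their products explicitly.
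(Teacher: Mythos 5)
Your overall recipe---enumerate the non-normal subgroups, read off the $t$-vectors from the double-coset decomposition via Lemma~\ref{Lem:HKWeight}(b), apply Proposition~\ref{Prop:PFormula} (or Corollary~\ref{Cor:Hprime}), and minimise---is exactly the paper's strategy, and your computations for (i), for the reflection subgroups in (ii), and for $A_4$ match the paper's. The paper simply delegates parts (iii)--(ix), as well as the base cases $n=3,4$ of (ii), to a GAP routine, which is consistent with your fallback of a computer-algebra verification.

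There is, however, one step in your plan that would fail as stated: the minimisation. You write that ``configurations with more, smaller components beat those with fewer, larger ones'' and that this ``is governed by Lemma~\ref{Lem:ProdEst}.'' The lemma says the opposite for a fixed index: by \eqref{Eq:PAsymp1}, $\prod_\sigma t_\sigma!/t_\sigma^{t_\sigma} \geq n!/n^n$ with the single-component configuration attaining the minimum, so for fixed $n$ it is the \emph{fewer, larger} components that give the smaller product. The real reason the order-$2$ (resp.\ order-$p$) subgroups win is that they have much \emph{larger index} than the competing subgroups, while the constraint $t_\sigma \mid |H|$ caps each component at size $2$; this is a comparison across different values of $n$, and Lemma~\ref{Lem:ProdEst} alone does not settle it. Indeed, in $A_5$ the crude lower bound $(n-1)!/(n-1)^{n-1}$ for a subgroup of index $15$ is already smaller than $(1/2)^{14}$, so you cannot rule out $V_4$ ``without computing its product explicitly''---you must actually compute $P_{A_5}(V_4)=(3/32)^3$ and compare. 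For the same reason your sketch of (ii) is incomplete: to eliminate the non-normal dihedral subgroups $D_k\leq D_n$ the paper passes to the quotient $D_{n/k}$ via Corollary~\ref{cor:normal} and runs an induction on $n$, showing $\mathrm{tp}(D_{n/k})>P_{D_n}(\langle\text{reflection}\rangle)$; some such quantitative argument (or explicit computation) is needed and is not supplied by your general principle. A smaller point: your suggestion to derive $\mathrm{tp}(\mathrm{SL}_2(3))=(2/9)^2$ from the quotient $A_4$ via Corollary~\ref{cor:normal} does not apply to the minimising subgroup, which is a $C_3$ not containing the centre; the value comes from the direct computation $m=(N_G(C_3):C_3)=2$, $n=8$, $P_G(C_3)=(2/9)^{(8-2)/3}$, which your general recipe does cover.
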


\begin{proofof}
(i) Let $G_p = C_p\rtimes C_{2^n}$ be as in \eqref{Eq:CpC2^n}. Then the only subgroups of $G_p$ that are not normal are the conjugates of $\langle b\rangle \cong C_{2^n}$. To see this observe that $a$ centralises $b^2$. This is because $a^b = a^{-1}$ implies that $aba = b$, and thus $a^kba^k = b$ for every $k \geq 1$. Hence 
\[
ab^2a^{-1} = (aba^{-1})^2 = (ba^{p-2})^2 = ba^{p-2}ba^{p-2} = b^2.
\]
Thus $\langle b^2\rangle$, which is the unique maximal
subgroup of $\langle b\rangle$, is normal in $G$.
Since $\langle b^2\rangle \cong C_{2^{n-1}}$ is cyclic, all of its subgroups (that is, all proper subgroups of $\langle b\rangle$) are also normal in $G_p$. Now let $H\leq G_p$
 be an arbitrary subgroup, and let $S$ be a $2$-Sylow subgroup of $H$. If $S \leq \langle b^2\rangle$ then, by the above, $S$ is normal in $G_p$, and we have $H = S$ or $H = S \langle a\rangle$, so $H\unlhd G_p$ in both cases. There remains the case that $S = \langle b\rangle^x$ for some $x\in G_p$. Then either $H = S = \langle b\rangle^x$ is a conjugate of $\langle b\rangle$, or we have $H = G_p$, which is again a normal subgroup of $G_p$. Our claim follows. Hence, by Corollary \ref{cor:iso} we have $\mathrm{tp}(G_p) = P_{G_p}(\langle b\rangle)$.

\noindent Now note that $N_{G_p}(\langle b\rangle) = \langle b\rangle$ while for any $x\in G_p$ we get $\langle b\rangle^x \cap \langle b\rangle = \langle b^2\rangle $. Hence $| \langle b\rangle :\langle b\rangle^x \cap \langle b\rangle| = 2 $ and the $t-$vector of $\langle b\rangle $ is $(\underbrace{2, 2, \cdots, 2}_{(p-1)/2}, 1)$.
 Hence $P_{G_p}(\langle b\rangle) = (\frac{1}{2})^{\frac{p-1}{2}}$ and the first part of the lemma follows.

\noindent (ii) We let 
\[
G = \left\langle a, b \,\big\vert\, a^2 = b^{n} = 1,\, b^a = b^{-1}\right\rangle \,\cong\, D_n, 
\]
and work by induction on $n$. The claim is true for $n = 3$ and $n = 4$ according to a GAP computation~\cite{GAP4}.
Suppose that the claim has been established for all
positive integers $j$ such that $3 \leq j < n$
and let $K$ be a non-normal subgroup of $G = D_n$.
If $|K| > 2$, then $K$ is dihedral and thus $K \cong D_k$
for some non-trivial proper divisor $k$ of $n$.
Moreover, let 
\[
N \coloneqq \mathrm{core}_G(K) = K \cap C, 
\]
where $C = \langle b\rangle$ is the normal cyclic subgroup of index $2$ in $D_n$
generated by a rotation by $\frac{2 \pi}{n}$ degrees. Since $G/N = \langle b^{n/k},\, a\rangle \cong D_{n/k}$, Corollary~\ref{cor:normal} plus the induction hypothesis yield
\[
P_G(K) = P_{G/N}(K/N) \geq \mathrm{tp}(D_{n/k}) = 
\left(\frac{1}{2}\right)^{\frac{\frac{n}{k}-i}{2}},\quad i = \left.\begin{cases} 1, & n/k \equiv 1\,(2)\\2,& n/k \equiv 0\,(2)\end{cases}\right\}.
\]
On the other hand, $\mathrm{Aut}(D_n)$ acts transitively
on the (non-normal) involutions of $D_n$. To see this,
recall that $D_n$ is generated by a rotation $r$ and a reflection $s$
subject to the relation $r^s = r^{-1}$.
Now consider the map $\phi_i : D_n \to D_n$ fixing every rotation 
and mapping $s r^j$ to $s r^{i+j}$, where the exponents work modulo $n$.
Then $\phi_i$ is a homomorphism (the reader is invited to check this detail) and since it is injective,
it is an automorphism of $D_n$.
Finally, notice that if $sr^k$, $sr^{\ell}$ are reflections,
then $\phi_{\ell} \circ \phi^{-1}_{k}$ sends $sr^k$ to $sr^{\ell}$.

Thus $P_G(H)$ has the same value for all non-normal
subgroups $H$ of $G$ of order $2$ by Corollary~\ref{cor:iso}. Let $H$ be one such subgroup.
Then
\[
P_G(H) = \begin{cases} 
(\frac{1}{2})^{\frac{n-1}{2}} \text{ if $n$ is odd}, \\[1.5mm]
(\frac{1}{2})^{\frac{n-2}{2}} \text{ if $n$ is even} 
\end{cases}
\]
by Corollary~\ref{Cor:Hprime}, where the distinction between cases is explained by the fact that $H$ is self-normalising if $n$ is odd (and thus $m = 1$ in that case),
while $m = (N_G(H) : H) = 2$ if $n$ is even.
Since $\mathrm{tp}(D_{\frac{n}{k}}) > P_G(H)$ in either case, $\mathrm{tp}(G)$ is attained for a non-normal cyclic subgroup of order $2$, and the induction is complete.

\noindent For Parts~(iii)--(ix) of Lemma~\ref{Lem:A4-A5} 
we use a GAP-routine to compute $\mathrm{tp}(G)$.
\footnote{The web address\, \url{https://fourier.math.uoc.gr/~marial/tp-Code}\, 
contains our GAP-code for computing $\mathrm{tp}(-)$.}
\end{proofof}

\noindent Our next result connects the function $\mathrm{tp}(-)$ to solubility of the corresponding finite group.

\begin{proposition}\label{Prop:solubilitycriterion}
Let $G$ be a finite group such that $\mathrm{tp}(G) > (1/2)^{40}$. Then $G$ is soluble or has a section isomorphic to $A_5$. Moreover, this result is best possible. 
\end{proposition}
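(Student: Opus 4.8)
The plan is to prove the contrapositive: assuming $G$ is insoluble and possesses no section isomorphic to $A_5$, I will show that $\mathrm{tp}(G) \le (1/2)^{40}$. The engine of the argument is Proposition~\ref{Prop:structproperties}(iii), which gives $\mathrm{tp}(G) \le \mathrm{tp}(X)$ for every section $X$ of $G$; so it suffices to exhibit a single section $X$ with $\mathrm{tp}(X) \le (1/2)^{40}$. To locate a good section I would first invoke the standard fact that every insoluble finite group has a section isomorphic to a \emph{minimal simple group} (a nonabelian simple group all of whose proper subgroups are soluble). This follows by choosing, among all nonabelian simple sections of $G$ — of which there is at least one, since an insoluble group has a nonabelian simple composition factor — one of minimal order, say $S$: if $S$ had an insoluble proper subgroup $T$, then a nonabelian simple composition factor of $T$ would be a nonabelian simple section of $G$ of strictly smaller order, a contradiction.

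Next I would bring in Thompson's classification of minimal simple groups, which says that $S$ is isomorphic to one of $\mathrm{PSL}_2(2^p)$ ($p$ prime), $\mathrm{PSL}_2(3^p)$ ($p$ odd prime), $\mathrm{PSL}_2(p)$ ($p>3$ prime with $p\equiv\pm 2\pmod 5$), $\mathrm{Sz}(2^p)$ ($p$ odd prime), or $\mathrm{PSL}_3(3)$. Since $S$ is a section of $G$ and $G$ has no section isomorphic to $A_5$, we have $S\not\cong A_5=\mathrm{PSL}_2(4)$. Inspecting orders, the only minimal simple group other than $A_5$ of order below $504$ is $\mathrm{PSL}_2(7)\cong\mathrm{PSL}_3(2)$ (order $168$); every remaining minimal simple group has order at least $504$.

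I would then handle the two regimes. For the boundary case $S\cong \mathrm{PSL}_2(7)\cong\mathrm{PSL}_3(2)$, Lemma~\ref{Lem:A4-A5}(vi) gives $\mathrm{tp}(S)=(1/2)^{40}$ outright. For $|S|\ge 504$, I would use a crude but decisive index bound: since $S$ is simple of even order, it contains a (necessarily non-normal) subgroup $H\cong C_2$, of index $n=|S|/2\ge 252$. The general estimate $P_S(H)\le (7/8)^n$ established in the proof of Theorem~\ref{Thm:Main} (Equation~\eqref{Eq:PBound}) then yields $\mathrm{tp}(S)\le P_S(H)\le (7/8)^{252}<(1/2)^{40}$, because $252\log(8/7)>40\log 2$. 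In either case $\mathrm{tp}(S)\le (1/2)^{40}$, whence $\mathrm{tp}(G)\le \mathrm{tp}(S)\le (1/2)^{40}$, completing the contrapositive.

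Finally, for sharpness I would point to $G=\mathrm{PSL}_3(2)\cong\mathrm{PSL}_2(7)$ itself: it is insoluble, its proper subgroups (the maximal ones being the soluble groups $S_4$ and $C_7\rtimes C_3$) are all soluble, so it has no section isomorphic to $A_5$, yet $\mathrm{tp}(G)=(1/2)^{40}$ by Lemma~\ref{Lem:A4-A5}(vi); hence the strict inequality in the hypothesis cannot be relaxed. The main obstacle I anticipate is not any single estimate but the structural reduction: one must realise that precise values of $\mathrm{tp}$ are needed only for $A_5$ and $\mathrm{PSL}_2(7)$, while the entire infinite tail of Thompson's list is dispatched uniformly by the elementary observation that a simple group of order $\ge 504$ has an order-$2$ subgroup of index large enough for the $(7/8)^n$ bound to beat $(1/2)^{40}$.
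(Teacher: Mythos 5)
Your proof is correct, but it takes a genuinely different route from the paper's. The paper argues by induction on $|G|$: in the simple case it uses the Feit--Thompson theorem to produce an involution $H$, applies the exact formula $P_G(H)=(1/2)^{(n-m)/2}$ of Corollary~\ref{Cor:Hprime} together with the index bound $|N_G(H)|\leq |G|/5$ to force $|G|<200$, and then reads off that the only candidates are $A_5$ and $\mathrm{PSL}_3(2)$; the non-simple case is handled by passing to a normal subgroup $N$ and to $G/N$, using that solubility is closed under extensions while $A_5$-sections lift. You instead prove the contrapositive, reduce to a minimal simple section $S$ by the standard minimal-order argument, and invoke Thompson's classification of minimal simple groups, disposing of every case with $|S|\geq 504$ by the crude bound $P_S(H)\leq (7/8)^{|S|/2}$ from \eqref{Eq:PBound} (the bound legitimately applies since an order-$2$ subgroup of a non-abelian simple group is non-normal, and your numerical check $252\log(8/7)>40\log 2$ is right). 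Both arguments are sound and both identify $\mathrm{PSL}_3(2)$ as the extremal example; the trade-off is that the paper's induction needs only Feit--Thompson plus the elementary determination of the non-abelian simple groups of order below $200$, whereas your argument avoids induction at the price of the much deeper $N$-group theorem. You could have used the paper's exact involution formula in place of $(7/8)^n$ (giving $P_S(H)\leq (1/2)^{100}$ for $|S|\geq 504$), but the reliance on Thompson's list would remain.
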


\begin{proofof}
We argue by induction on the order of $G$.
Our claim is obviously true if $G$ is the trivial group, so that our induction begins. Assume now that $\vert G\vert \geq 2$, that $\mathrm{tp}(G) > (1/2)^{40}$, that $G$ is non-abelian, and that our claim holds for groups of smaller order. Suppose first that $G$ is a non-abelian simple group, and let $H$ be a subgroup of order $2$ in $G$ 
(existence of $H$ is guaranteed by the celebrated Feit-Thompson Odd Order Theorem~\cite{FeitThomp}). Then, by Corollary~\ref{Cor:Hprime} and our hypothesis,
\[
P_G(H) = 
\left(\frac{1}{2}\right)^{\frac{n-m}{2}} \,\geq \,
\mathrm{tp}(G) \,>\, \left(\frac{1}{2}\right)^{40},
\]
where $n = (G : H)$ and $m = (N_G(H) : H)$.
Thus $\frac{n-m}{2} < 40$, or equivalently $n - m < 80$.
It follows that 
\begin{equation}\label{Eq:56}
|G| - |N_G(H)| < 160.
\end{equation}
Since $G$ is non-abelian simple, 
no proper subgroup can have index less than $5$,
so $|N_G(H)| \leq \frac{|G|}{5}$ and thus
\begin{equation}\label{Eq:45}
|G| - |N_G(H)| \geq \frac{4|G|}{5}.
\end{equation}
Combining~\eqref{Eq:45} and~\eqref{Eq:56} yields
$\frac{4|G|}{5} < 160$, or equivalently $|G| < 200$.
However, the only non-abelian simple group of order less than $200$ are $A_5$ (of order $60$) and $\mathrm{PSL}_3(2)$ (of order $168$). Now $A_5$ satisfies $\mathrm{tp}(A_5) = (1/2)^{14} > (1/2)^{40}$ by Part~(v) of Lemma~\ref{Lem:A4-A5}, but trivially has a section isomorphic to $A_5$, thus is acceptable, while $\mathrm{tp}(\mathrm{PSL}_3(2)) = (1/2)^{40}$, so that $G \cong \mathrm{PSL}_3(2)$ is ruled out by our hypothesis. Thus, $G \cong A_5$ for $G$ non-abelian simple, and the claim follows in that case.

\noindent Consequently, we may assume further that $G$ is not simple, thus $G$
has a proper non-trivial normal subgroup $N$.
By Parts~(i) and (ii) of Proposition~\ref{Prop:structproperties}, we have $\mathrm{tp}(N) \geq\mathrm{tp}(G) > 2^{-40}$ and $\mathrm{tp}(G/N) \geq\mathrm{tp}(G) > 2^{-40}$,
thus, by the induction hypothesis applied to $N$ and $G/N$, 
each of $N$ and $G/N$ is either soluble or has a section isomorphic to $A_5$. If both are soluble, so is $G$. If $N$ has a section isomorphic to $A_5$, then so does $G$ while, if $G/N$ has a section isomorphic to $A_5$, this section lifts to give a corresponding section isomorphic to $A_5$ in $G$. This 
completes the induction. Finally, since $G = \mathrm{PSL}_3(2)$ is not soluble (in fact, $G$ is non-abelian simple), $5 \nmid \vert G\vert$ (so that $G$ cannot contain a section isomorphic to $A_5$), and $\mathrm{tp}(G) = (1/2)^{40}$ by Part~(vi) of Lemma~\ref{Lem:A4-A5}, our result is indeed best possible. 
\end{proofof}

\noindent We now turn our attention to the connection of $\mathrm{tp}(-)$ and supersolubility. The proof of Theorem~\ref{Thm:supersol} below is considerably harder than that of Proposition~\ref{Prop:solubilitycriterion}, which is mainly due to the fact that the class of supersoluble groups is not closed under extensions 
(otherwise every metabelian group would be supersoluble,
but $A_4$ is a counterexample).

\begin{theorem}\label{Thm:supersol}
Let $G$ be a finite group with $\mathrm{tp}(G) > (1/2)^8$. 
Then either $G$ is supersoluble, or $G$ has a section isomorphic to $A_4$. Moreover, the bound $(1/2)^8$ is sharp.
\end{theorem}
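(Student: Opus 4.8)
The plan is to mirror the inductive strategy that worked for Proposition~\ref{Prop:solubilitycriterion}, arguing by induction on $\vert G\vert$, but with the crucial awareness that supersolubility is \emph{not} closed under extensions. Accordingly the inductive step must be structured so that we never try to conclude supersolubility of $G$ merely from supersolubility of a normal subgroup $N$ and of the quotient $G/N$. First I would dispose of the simple (or near-simple) base case. If $G$ is non-abelian simple, take a subgroup $H$ of order $2$ (available by Feit--Thompson) and invoke Corollary~\ref{Cor:Hprime} to get $P_G(H) = (1/2)^{(n-m)/2} \geq \mathrm{tp}(G) > (1/2)^8$, forcing $n - m < 16$ and hence $\vert G\vert - \vert N_G(H)\vert < 32$. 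Since no proper subgroup of a non-abelian simple group has index less than $5$, we obtain $\tfrac{4}{5}\vert G\vert < 32$, so $\vert G\vert < 40$; but there is no non-abelian simple group of order below $40$, so this case is vacuous and every relevant $G$ is either abelian (hence supersoluble) or has a proper nontrivial normal subgroup.

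Next I would handle the reduction via a minimal normal subgroup $N$. By Proposition~\ref{Prop:structproperties}(i)--(ii) both $N$ and $G/N$ inherit the bound $\mathrm{tp}(-) > (1/2)^8$, so by induction each is supersoluble or has a section isomorphic to $A_4$. If either $N$ or $G/N$ already contains an $A_4$-section, that section lifts to $G$ (a quotient-section lifts just as in Proposition~\ref{Prop:solubilitycriterion}) and we are done. The remaining, and genuinely delicate, case is when both $N$ and $G/N$ are supersoluble with no $A_4$-section. Here I would first note that $G$ is then soluble, so we may take $N$ to be an elementary abelian $p$-group, a minimal normal subgroup, on which $G/C_G(N)$ acts irreducibly. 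Supersolubility of $G$ is equivalent to $G$ acting on each such chief factor with cyclic (indeed abelian of a controlled shape) action; the obstruction to supersolubility is precisely a chief factor $N$ of rank $\geq 2$, or a rank-one factor on which the action is not by scalars giving a non-supersoluble configuration. The heart of the proof will therefore be to show that any such bad chief factor forces either an $A_4$-section or a violation of $\mathrm{tp}(G) > (1/2)^8$.

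For this heart of the argument I would exploit the groups computed in Lemma~\ref{Lem:A4-A5}, especially parts (i), (iv), and (viii). The benchmark examples are $A_4$ itself (with $\mathrm{tp}(A_4) = 2/9$) and $C_3^2 \rtimes C_4$ (with $\mathrm{tp}(C_3^2 \rtimes C_4) = (1/2)^8$, which pins down sharpness), together with the semidirect products $C_p \rtimes C_{2^n}$ from part~(i). The strategy is: given a minimal counterexample $G$ that is supersolubility-obstructed but has no $A_4$-section, locate inside $G$ a section $X$ realising one of these critical configurations. Concretely, a non-supersoluble action on a chief factor produces either a section isomorphic to $A_4$ (if a $p$-group of rank $2$ is acted on in the manner of $A_4$, e.g.\ $C_2^2 \rtimes C_3$) or a section whose $\mathrm{tp}$ is at most $(1/2)^8$. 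By Proposition~\ref{Prop:structproperties}(iii), $\mathrm{tp}(G) \leq \mathrm{tp}(X)$, so exhibiting a section $X$ with $\mathrm{tp}(X) \leq (1/2)^8$ contradicts the hypothesis. The main obstacle, and where the bulk of the work lies, is the careful case analysis of the faithful irreducible action of the (supersoluble, $A_4$-free) group $G/N$ on the chief factor $N$: one must enumerate the possible module dimensions and the image of $G/N$ in $\mathrm{GL}(N)$, and in each non-supersoluble case either extract an explicit $A_4$-section or build a section matching one of the extremal groups of Lemma~\ref{Lem:A4-A5}. Finally, sharpness follows because $C_3^2 \rtimes C_4$ is neither supersoluble nor possesses an $A_4$-section, yet attains $\mathrm{tp} = (1/2)^8$, showing the bound cannot be lowered.
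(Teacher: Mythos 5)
Your overall architecture matches the paper's: induction on $\vert G\vert$, reduction to a minimal normal subgroup $A$ (which, once $G$ is known to be soluble and $G/A$ supersoluble without an $A_4$-section, must be elementary abelian of rank $r>1$ to obstruct supersolubility), and sharpness via $C_3^2 \rtimes C_4$. Your handling of the simple base case is a legitimate shortcut, and your warning that one must not conclude supersolubility from an extension of supersoluble pieces is exactly the right caution. However, the heart of the argument --- deriving a contradiction from $\mathrm{tp}(G) > (1/2)^8$ when a non-cyclic chief factor is present --- is only announced, not carried out, and the route you propose for it (``enumerate the possible module dimensions and the image of $G/A$ in $\mathrm{GL}(A)$, and in each case extract an $A_4$-section or an extremal section'') has no termination mechanism: there are infinitely many supersoluble, $A_4$-free groups acting irreducibly and non-supersolubly on elementary abelian groups, so without a numerical lever the case analysis never closes.

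The paper's lever, which is the missing idea, is the following. First one reduces to the split case: $A \not\leq \Phi(G)$ because supersoluble groups form a saturated formation, so a maximal subgroup $M$ complements $A$ and $M \cong G/A$ is supersoluble. One then takes a minimal normal subgroup $T$ of $M$, which is cyclic of prime order $q$; either $T \unlhd G$ (and induction applies to $G/T$) or $N_G(T) = M$, in which case Corollary~\ref{Cor:Hprime} gives the explicit inequality
\[
\left(\tfrac{1}{2}\right)^8 < P_G(T) = \left(\frac{q!}{q^q}\right)^{\frac{t(p^r-1)}{q}},
\]
with $t = (M:T)$ and $(G:M) = p^r$. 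This single inequality bounds $q$, $t$ and $p^r$ simultaneously (e.g.\ $q\geq 5$ forces the exponent to be $1$ and leads to $\vert G\vert = 56$; $q=3$ and $q=2$ force $\vert G\vert \leq 36$), after which a finite GAP check finishes the proof. Without identifying this specific prime-order subgroup $T$ with $N_G(T)=M$ and applying the $P_G(T)$ formula to it, your plan does not yield a finite list of candidate groups, so as written the proposal has a genuine gap at its central step.
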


\begin{proofof}
We induce on the order of $G$. If $G$ is trivial, then the theorem clearly holds, so that the induction starts. Suppose that $\vert G\vert \geq 2$, that $G$ is non-abelian, that $\mathrm{tp}(G) > 2^{-8}$, and that the result holds for groups of smaller order. 
Since $\mathrm{tp}(G) > (1/2)^8 > (1/2)^{40}$, our group $G$ is soluble by Proposition~\ref{Prop:solubilitycriterion}. 
Let $A$ be a minimal normal subgroup of $G$. 
Then $A$ is an elementary abelian group of order $p^r$ 
for some prime $p$ and some positive integer $r$ 
(see, for instance, Satz~9.13 in \cite[Chap.~I]{Huppert1}). 
Since, by Part~(ii) of Proposition~\ref{Prop:structproperties} plus our hypothesis concerning $G$, 
\[
(1/2)^8< \mathrm{tp}(G) \leq \mathrm{tp}(G/A),
\]
the inductive hypothesis applies to the quotient $G/A$. 
Hence, either $G/A$ is supersoluble or $G/A$ has a section isomorphic to $A_4$. 
In the latter case, the section in question lifts isomorphically to a section in $G$; 
thus, we may suppose that $G/A$ is supersoluble.

\noindent If $A$ is cyclic, then $G$ is also supersoluble (see, for instance,~\cite[7.2.14]{Scott}). 
Thus, we may suppose further that $A$ is not cyclic; that is, $r > 1$ and $A \nleq Z(G)$. 
If $A \leq \Phi(G)$ then $G/A$ supersoluble would imply that $G$ itself is supersoluble 
owing to the fact that supersoluble groups comprise a saturated formation 
(see, for instance, Satz~8.6(a) in \cite[Chap.~VI]{Huppert1}). 
Consequently, we may also assume that $A \nleq \Phi(G)$, 
so that there exists a maximal subgroup $M$ of $G$ with $A \nleq M$; 
in particular, $A M = G$. As $A$ is normal in $G$, we have 
\[
(A\cap M)^g = A^g \cap M^g = A \cap M^g = A \cap M,\quad (g\in G),
\]
where $M = M^g$, comes from the fact that $M M^g \neq G$ by a theorem of Ore's (see Satz~3.9 in \cite[Chap.~II]{Huppert1}). 
Thus, $A \cap M \unlhd G$, and since $A\cap M < A$, minimality of $A$ forces 
$A \cap M = 1$, so that $G$ splits over $A$. 
Furthermore, $G/A \cong M$ and thus $M$ is supersoluble. 
Let $T$ be a minimal normal subgroup of $M$. 
Then $T$ is a cyclic group of order $q$ for some prime $q$, 
while $N_G(T) \geq M$. 
Since $M$ is maximal in $G$, 
either $N_G(T) = M$ or $T \unlhd G$. 
In the second case, invoking Proposition~\ref{Prop:structproperties}(ii), 
our assumption that $\mathrm{tp}(G) > (1/2)^8$, and the induction hypothesis, we deduce 
that either $G/T$ (and thus $G$) is supersoluble, or $G/T$, and thus $G$, has a section isomorphic to $A_4$. 
We may therefore assume that $N_G(T) = M$. 
If $(M : T) = t$, then Corollary~\ref{Cor:Hprime} implies that 
\begin{equation} \label{Eq:ssin}
\left(\frac{1}{2}\right)^8 < \mathrm{tp}(G) \leq P_G(T) = \left(\frac{q!}{q^q}\right)^{\frac{t(p^r-1)}{q}}, 
\end{equation}
where we note that
\[
(G:M) = (A \rtimes M : M) = \vert A\vert = p^r, 
\]
and that the exponent $t(p^r-1)/q$ is integral (see Corollary~\ref{Cor:Hprime}). 
Suppose first that $q \geq 5$. 
Then $q!/q^q \leq 5!/5^5 = 24/625$ and thus
\[
\left(\frac{q!}{q^q}\right)^2 < 
\left(\frac{1}{2}\right)^8 < \mathrm{tp}(G)
 \leq P_G(T) = \left(\frac{q!}{q^q}\right)^{\frac{t(p^r-1)}{q}}. 
\]
The above inequality forces the exponent of the
right-hand-side to be equal to $1$ so that
$q = t (p^r-1)$. As $r >1$ we conclude that 
$t = 1$ and so $p = 2$ and $q$ is a Mersenne prime
(thus $r$ is prime as well).
But the only Mersenne prime $q$ such that
$q! / q^q > (1/2)^8$ is $q = 7$, which implies that $|G| = 56$. 
Using GAP we see that the only non-supersoluble group of order $56$ 
is the Frobenius group $C_2^3 \rtimes C_7$, 
whose transversal probability is $1/4096 < 2^{-8}$ 
by Part~(vii) of Lemma~\ref{Lem:A4-A5}.

\noindent Now suppose that $q = 3$. Then 
\[
\left(\frac{3!}{3^3}\right)^4 < 
\left(\frac{1}{2}\right)^8 < \mathrm{tp}(G)
 \leq P_G(T) = \left(\frac{3!}{3^3}\right)^{\frac{t(p^r-1)}{3}}. 
\]
So $\frac{t(p^r-1)}{3} \leq 3$. 
As $ r >1$ and $(p^r -1) \geq 3$, we conclude that one of the following occurs:
\[
\begin{cases}
t = 1 \text{ and } p^r = 4, 8, 9,\\[1mm]
t = 2 \text{ and } p^r = 4,\\[1mm]
t = 3 \text{ and } p^r = 4.
\end{cases}
\]
So in all cases $t \cdot p^r \leq 12$ and thus $|G| = 3 \cdot t \cdot p^r \leq 36 $. Using GAP, we see that the non-supersoluble groups of order at most $36$ satisfying $\mathrm{tp} > (1/2)^8$ are $A_4, \mathrm{SL}_2(3), S_4, C_2 \times A_4, C_3.A_4$, and $C_3 \times A_4$, all of which have a section isomorphic to $A_4$ (actually, all but $S_4$ have a quotient isomorphic to $A_4$).

\noindent We should mention here that among the non-supersoluble groups of order $36$ is the group $G = C_3^2 \rtimes C_4$ whose transversal probability is exactly $(1/2)^8$ and $G$ contains no section isomorphic to $A_4$; see Part~(viii) of Lemma~\ref{Lem:A4-A5}. This implies that our bound, when proved, will be sharp.

\noindent We are left with the case where $q = 2$. Clearly, for \eqref{Eq:ssin} to hold, we must have $ \frac{t(p^r-1)}{2} \leq 7$. We still have $ r > 1$ and $ p^r -1 \geq 3$, thus we get one of the following cases:
\[
\begin{cases}
t = 1 \text{ and } p^r = 4, 8, 9,\\[1mm]
t = 2 \text{ and } p^r = 4,8,\\[1mm]
t = 3 \text{ and } p^r = 4,\\[1mm]
t = 4 \text{ and } p^r = 4.
\end{cases}
\]
In all cases, we have $t \cdot p^r \leq 16 $, thus $|G| = 2 \cdot t \cdot p^r \leq 32 $. As we have already checked, 
there is no counterexample to our claim among the groups of order at most $36$, so the proof is complete.
\end{proofof}

\noindent The last theorem of this subsection relates $\mathrm{tp}(-)$ and nilpotency. 
In order to express ourselves concisely and
avoid repetition, we adopt a local convention.
We will say that the group $G$ has a \emph{bad section}
if it has a section isomorphic to one of the groups
in the following set: $\{A_4, D_3, D_5, D_7 \}$.

\begin{theorem}\label{Thm:nilp}
Let $G$ be a finite group with 
$\mathrm{tp}(G) > 4/81 = (2/9)^2$. 
Then either $G$ is nilpotent 
or $G$ has a bad section. 
Furthermore, the bound is sharp.
\end{theorem}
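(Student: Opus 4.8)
The plan is to induct on $\vert G\vert$, following the template of Theorem~\ref{Thm:supersol}. The base case is trivial, so assume $\mathrm{tp}(G) > 4/81$, that $G$ is non-abelian (abelian groups being nilpotent), and that the statement holds for all groups of smaller order. Since $4/81 > (1/2)^8$, Theorem~\ref{Thm:supersol} applies: either $G$ has a section isomorphic to $A_4$, so $G$ has a bad section and we are done, or else $G$ is supersoluble, which we henceforth assume. Let $A$ be a minimal normal subgroup of $G$; as the chief factors of a supersoluble group have prime order, $A \cong C_p$ for some prime $p$. By Proposition~\ref{Prop:structproperties}(ii) we have $\mathrm{tp}(G/A) \geq \mathrm{tp}(G) > 4/81$, so the induction hypothesis gives that $G/A$ is nilpotent or has a bad section; in the latter case the section lifts isomorphically to $G$ and we are done. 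Hence we may assume $G/A$ is nilpotent.

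Next I would split according to whether $A$ is central. If $A \leq Z(G)$, then $G/Z(G)$ is a quotient of the nilpotent group $G/A$, hence nilpotent, so $G$ itself is nilpotent and we are done. The substantive case is therefore $A \nleq Z(G)$, in which $G$ acts non-trivially on $A \cong C_p$, forcing $p$ to be odd. Writing $C \coloneqq C_G(A) \unlhd G$, the quotient $G/C$ embeds in $\mathrm{Aut}(C_p) \cong C_{p-1}$, so $G/C$ is cyclic of some order $d > 1$.

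The heart of the argument is to extract, for a prime $q \mid d$, a section of $G$ isomorphic to the Frobenius group $C_p \rtimes C_q$. Concretely, choosing an element of $G$ whose image in the cyclic group $G/C$ has order $q$ and passing to its $q$-part $b$, one obtains a $q$-element acting on $A$ through an automorphism of order $q$; then $H \coloneqq A\langle b\rangle = A \rtimes \langle b\rangle$ has $\langle b^q\rangle \unlhd H$ centralising $A$, and $H/\langle b^q\rangle \cong C_p \rtimes C_q$ with faithful, hence fixed-point-free, action. By Proposition~\ref{Prop:structproperties}(iii) we get $\mathrm{tp}(G) \leq \mathrm{tp}(C_p \rtimes C_q)$, and since the only non-normal subgroups of this group of order $pq$ are the Frobenius complements $C_q$, Corollary~\ref{Cor:Frob} yields $\mathrm{tp}(C_p \rtimes C_q) = (q!/q^q)^{(p-1)/q}$. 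The proof then reduces to the purely arithmetic fact that $(q!/q^q)^{(p-1)/q} > 4/81$ can hold only for $q = 2$ and $p \in \{3,5,7\}$: for $q \geq 5$ one has $q!/q^q \leq 24/625 < 4/81$; for $q = 3$ the smallest admissible prime $p = 7$ already gives $(2/9)^2 = 4/81$, violating the strict inequality; and for $q = 2$ the bound $(1/2)^{(p-1)/2} > 4/81$ forces $p \leq 7$. In each surviving case the section $C_p \rtimes C_2 = D_p$ is one of $D_3, D_5, D_7$, i.e. a bad section, completing the induction. I expect the main obstacle to be the clean verification that the section $C_p \rtimes C_q$ genuinely occurs inside $G$ — in particular the reduction to a $q$-element acting with order \emph{exactly} $q$ — together with the observation that $\mathrm{tp}$ of an order-$pq$ group is governed solely by its $C_q$ complements.

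Finally, for sharpness I would exhibit the Frobenius group $G = C_7 \rtimes C_3$ of order $21$: it is non-nilpotent, and by Corollary~\ref{Cor:Frob} (complement $C_3$, so $n = 7$ and $m = 1$) one computes $\mathrm{tp}(G) = (3!/3^3)^{(7-1)/3} = (2/9)^2 = 4/81$. As $\vert G\vert = 3\cdot 7$, every section of $G$ has order dividing $21$, so none of $A_4, D_3, D_5, D_7$ (of orders $12, 6, 10, 14$) can occur as a section; thus $G$ has no bad section, which shows that the strict inequality $\mathrm{tp}(G) > 4/81$ cannot be weakened.
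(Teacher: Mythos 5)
Your proof is correct, and while it shares the paper's skeleton --- induction on $|G|$, reduction to the supersoluble case via Theorem~\ref{Thm:supersol}, a minimal normal subgroup $A \cong C_p$, and the same arithmetic endgame with $(q!/q^q)^{(p-1)/q}$ --- the structural middle is genuinely different. The paper uses the induction hypothesis and the formation property of nilpotent groups to force $\Phi(G) = 1$ and a \emph{unique} minimal normal subgroup, then invokes Dedekind's lemma, the $N/C$-theorem and the Frobenius recognition criterion to conclude that $G$ itself is a Frobenius group $A \rtimes M$ with $M$ cyclic of order $d \mid p-1$, and finally applies Corollary~\ref{Cor:Frob} to $P_G(M)$, splitting on whether $d = p-1$ or $d < p-1$. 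You instead split on whether $A$ is central: the central case is settled by the elementary fact that $A \leq Z(G)$ together with $G/A$ nilpotent implies $G$ nilpotent, and in the non-central case you bypass all global structure theory by manufacturing a \emph{section} isomorphic to $C_p \rtimes C_q$ (taking $b$ to be the $q$-part of an element whose image in the cyclic group $G/C_G(A)$ has order $q$, and passing to $A\langle b\rangle/\langle b^q\rangle$), then applying Proposition~\ref{Prop:structproperties}(iii) and Corollary~\ref{Cor:Frob} to that section; this construction does check out, including the faithfulness of the induced order-$q$ action. Your route is lighter on machinery --- no Frattini argument, no uniqueness of the minimal normal subgroup, no Frobenius recognition --- and your non-central case does not even use the induction hypothesis; what the paper's route buys in exchange is an explicit structural description at the critical step ($G$ Frobenius with cyclic complement), which is more informative but not needed for the statement. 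Your arithmetic ($q \geq 5$ killed by $24/625 < 4/81$; $q = 3$ killed because $3 \mid p-1$ forces $p \geq 7$ and hence exponent at least $2$; $q = 2$ forcing $p \in \{3,5,7\}$ and the bad sections $D_3$, $D_5$, $D_7$) and your sharpness example $C_7 \rtimes C_3$ agree with the paper's.
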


\begin{proofof}
We induce on the order of $G$. 
If $G$ is trivial then the theorem clearly holds, 
thus the induction begins. 
Suppose that $\vert G\vert \geq 2$, 
that $\mathrm{tp}(G) > 4/81$, 
and that the result holds for groups of smaller order. 
Since $\mathrm{tp}(G) > 4/81 > (1/2)^8$, 
Theorem~\ref{Thm:supersol} implies that either $G$ is supersoluble 
or has a section isomorphic to $A_4$. 
Thus we may assume that our group is supersoluble.

\noindent We now argue that $\Phi(G) = 1$
and that $G$ has a unique minimal normal
subgroup, say $A$, which must moreover
be cyclic of prime order since $G$ is supersoluble.
\begin{itemize}
\item To justify the first assertion, suppose
instead that $\Phi(G) > 1$. Then
\[
4/81< \mathrm{tp}(G) \leq \mathrm{tp}(G/\Phi(G)),
\]
and thus the inductive hypothesis applied
to $G/\Phi(G)$ gives that $G/\Phi(G)$ is nilpotent
or $G/\Phi(G)$ has a bad section. If $G/\Phi(G)$ is nilpotent,
then $G$ is nilpotent and we are done. On the other
hand, if $G/\Phi(G)$ has a bad section, then
the section in question lifts isomorphically to a section in $G$
and thus $G$ itself has a bad section
and we are done in that case too.

\item To justify the second assertion,
suppose that there exist minimal normal subgroups
$A \neq B$. Then $G/A$ may be assumed to
be nilpotent and the same holds for $G/B$.
But nilpotent groups comprise a formation
thus $G/(A\cap B)$ is nilpotent, hence
$G$ is nilpotent (since $A \cap B = 1$) completing the proof.
\end{itemize}

\noindent So let $A$ be the unique minimal normal
subgroup of $G$. Since $\Phi(G) = 1$,
there exists a maximal subgroup $M$ not
containing $A$ and thus $A \cap M = 1$.
It follows that $M$ complements $A$ in $G$,
hence $G = A \rtimes M$.
Now observe that $M$ is necessarily core-free
owing to the fact that $A$ is the unique minimal normal
subgroup of $G$.
Then $C_G(A)$ is normal in $G$ and contains $A$.
By Dedekind's lemma we have $C_G(A) = AK$,
where $K \coloneqq C_M(A)$. Thus $K$ is normal
in $M$ and is centralised by $A$, 
hence $K \unlhd G$. 
It follows that $K$ is trivial and $A$
is self-centralising.
By the $N/C$-theorem $M \cong G/A$
embeds isomorphically as a subgroup of 
the automorphism group of $A$ which is cyclic
of order $p-1$.
Moreover, every non-trivial element of $A$
is a generator of $A$, thus for all $a \in A$
with $a \neq 1$ we have 
\[
C_G(a) = C_G(\langle a \rangle) = C_G(A) = A \leq A.
\]
Thus (see Theorem 6.4 in \cite{Isa}) $G$ is isomorphic to a Frobenius group
with $A$ the Frobenius kernel 
and $M$ a Frobenius complement 
being cyclic of order $d$, where $d \mid p-1$.
By Corollary~\ref{Cor:Frob} we have
\[
P_G(M) = \left(\frac{d!}{d^d}\right)^{\frac{p-1}{d}} \geq 
\mathrm{tp}(G) > 
\frac{4}{81}.
\]
If $d = p-1$, then the fact that 
$d!/d^d > 4/81$ forces $d \leq 4$
and thus $d \in \{1,2,4\}$.
Then $G$ is isomorphic to $C_2$ or to $D_3$ or to the Frobenius group of order $20$ respectively, 
so $G$ is nilpotent or $G$ has a bad section 
(in the last case because it has a subgroup isomorphic to $D_5$).
We may therefore assume that $d < p - 1$,
hence $(p-1)/d \geq 2$. 
Thus 
\[
\left(\frac{d!}{d^d}\right)^2 \geq
\left(\frac{d!}{d^d}\right)^
{\frac{p-1}{d}} >\,\,
\left(\frac{2}{9}\right)^2
\]
forcing $d!/d^d > 2/9$.
That is only possible if $d = 2$
whence $G$ is itself a dihedral group.
But the inequality 
\[
\left(\frac{1}{2}\right)^{\frac{p-1}{2}} >
\left(\frac{2}{9}\right)^2
\]
only holds for $p \leq 7$ in which case $G$ has a bad section.
The proof is complete.

\noindent The semi-direct product $C_7 \rtimes C_3$ 
has common transversal probability equal to $4/81$ 
and is neither nilpotent nor does it contain a bad section. 
Thus, our result is indeed sharp.
\end{proofof}

\subsection{Structural characterisation of certain $\mathrm{tp}$-values}

\noindent 
We saw in the previous section, how the knowledge  of the  range of values of the invariant  $\mathrm{tp}(G)$ provides structural information for the group $G$. But what can be said about specific values of $\mathrm{tp}(G)$? 
For example,  we have seen that if $G$ is not a Dedekind group then there exists a non-normal subgroup $H$ and thus $\mathrm{tp}(G) \leq P_G(H) \leq 1/2$, so it is natural to wonder if we can characterise all groups with $\mathrm{tp}(G)= 1/2$. 
We will actually show that if $\mathrm{tp}(G)= p!/p^p$ for a prime $p$ then $p=2$ and the group $G$ is one of a  very specific list of groups (cf. Corollary~\ref{Cor:tp = 1/2} below). We will also show, that like the value  $p!/p^p$ for $p$ odd prime that can't be attained, also $\mathrm{tp}$ can never equal $ \frac{p!}{p^p} \cdot \frac{q!}{q^q}$ for distinct primes $p <q$ (see Theorem~\ref{Thm:p<q} below). On the other hand if $p=q=2$ we fully characterise groups with transversal probability $\mathrm{tp}(G)=1/4$ (cf. Theorem~\ref{thm:tp = 1/4}).

Our first result is purely arithmetic and is of independent interest.

\begin{theorem}
 \label{Thm:prodp_i}
Assume that 
\begin{equation}\label{eq:prodp_i}
 \prod_{i = 1}^n\frac{t_i!}{t_i^{t_i}} = \prod_{i = 1}^k\frac{p_i!}{p_i^{p_i}}, 
\end{equation}
where the numbers $t_i >1$ are positive integers and the $p_i$ are distinct primes. 
Then $k = n$ and, after appropriate rearrangement, $t_i = p_i$ for all $i = 1, \ldots, n$.
\end{theorem}

\begin{proofof}
We induce on $k$.
Assume $k = 1$, that is 
$\prod_{i = 1}^n\frac{t_i!}{t_i^{t_i}} = \frac{p!}{p^{p}}$.
Then there exists a $t_{\sigma}$ such that $p \mid t_{\sigma}$ and without loss we may assume $p \mid t_1$. 
Now, the sequence $a_n = \frac{n!}{n^n} $ is monotonically decreasing, as 
\[
\frac{a_{n+1}}{a_n} = 
\left(\frac{n}{n+1}\right)^n = 
\left(\frac{1}{1+1/n}\right)^n 
< 1,
\]
while clearly $a_n < 1$ for all $n > 1$. 
Hence 
\[
\frac{p!}{p^p} = 
\prod_{i = 1}^n\frac{t_i!}{t_i^{t_i}} \leq \frac{t_1!}{t_1^{t_1 }} \leq 
\frac{p!}{p^p}.
\]
Therefore $t_1 = p$ and $n = 1$.

\noindent Assume now that the inductive hypothesis holds for all values less than $k$ and we will prove it for $k$. 
Thus we have $\prod_{i = 1}^n\frac{t_i!}{t_i^{t_i}} = \prod_{i = 1}^k\frac{p_i!}{p_i^{p_i}}$ for distinct primes $\{p_i\}_{i = 1}^k$ and we assume that $\{t_i \}$ and $\{p_i\}$ are written in a decreasing order. So $p: = p_1$ is the largest prime involved in the right hand side and $t_1 \geq t_2 \geq \ldots \geq t_n$. 
Clearly $p >2$ and equation \eqref{eq:prodp_i} implies
\begin{equation} \label{eq:prodp_i-1}
 p^{p-1} \prod_{i = 2}^k p_i^{p_i} \prod_{i = 1}^n t_i! = (p-1)! \prod_{i = 2}^k p_i! \prod_{i = 1}^n t_i^{t_i}.
 \end{equation}
Hence $p^{p-1}$ divides $\prod_{i = 1}^n t_i^{t_i}$, and thus $p$ divides at least one of the $t_i$'s. 
If $p = t_{\sigma}$ for some $\sigma = 1, \ldots , n$, then we are done by the inductive hypothesis. To see this, divide both sides of \eqref{eq:prodp_i} by $p!/p^{p}$ to get
\[
\prod_{i = 1, i \neq \sigma}^n\frac{t_i!}{t_i^{t_i}} = \prod_{i = 2}^k\frac{p_i!}{p_i^{p_i}},
\]
where the inductive hypothesis now applies.

\noindent So we may assume that $t_i \neq p $ for all $i = 1, \ldots , n$, while the fact that $p$ divides some $t_i$ implies that $t_1 \geq 2p$.

\noindent For any $x = m/n \in \mathbb{Q}$
let $v_p(x)$ denote the $p$-adic valuation of $x = m/n$, that is, if $x \in \mathbb{Z}$ then $v_p(x)$ is the exponent of the largest power of $p$ that divides $x$ and $v_p(x) = 0$ if $p \nmid x$, 
while for $x = m/n$ we  write  $v_p(x) = v_p(m/n) = v_p(m) - v_p(n)$.
Then observe that for any integer $t$ we have that 
\[
\mbox{$v_p(t! /t^t) < 0 $ \, \, if and only if \, \, $p \mid t$.} 
\]
Now let $q$ be the biggest prime smaller than $t_1$. Then $q > p$ because $t_1 \geq 2p$ and according to Bertrand's postulate there exists a prime in the interval $(p, 2p)$; cf., for instance, \cite[Thm.~418]{HW}. 
Hence 
\begin{equation}
p < q \leq t_1.
\end{equation}
Assume now that $t_1=q$ and observe that $v_q(t_i!/t_i^{t_i})= 1-q $ for all $i $ with $t_i=q$ (there is at least one such, namely $t_1$), while for the remaining we have  $v_q(t_i!/t_i^{t_i})=0$, since $t_1$ is the biggest of the $t_i$'s.  Hence equation \eqref{eq:prodp_i} implies 
\[
\sum_{t_i=q} (1-q) = \sum_{i=1}^n v_q(t_i!/t_i^{t_i}) = \sum_{i=1}^k v_q(\frac{p_i!}{p_i^{p_i}}) = 0
\]
which is clearly absurd.  Hence $q <t_1$.
In addition, applying Bertrand's postulate again for the prime $q$, we conclude that $t_1 < 2q$ (or else a bigger prime than $q$ exists in $(q, 2q)$ and this prime would be smaller than $t_1$
contradicting the choice of $q$). 
Hence 
\begin{equation}
p < q <  t_1 < 2q .
\end{equation}
Therefore $q \nmid t_1$. 
So $v_q(t_1!/t_1^{t_1}) = 1$, while 
$v_q(\prod_{i = 1}^k\frac{p_i!}{p_i^{p_i}}) = 0$ as $q > p$ and $p$ is the largest of the $p_i$'s.

\noindent Thus, in view of \eqref{eq:prodp_i}, we get 
\[
1+ \sum_{i = 2}^n v_q(t_i!/t_i^{t_i}) = v_q(\prod_{i = 1}^k\frac{p_i!}{p_i^{p_i}}) = 0.
\]
We conclude that 
$\sum_{i = 2}^n v_q(t_i!/t_i^{t_i})<0$. 
Hence there exists $t_s$, for some $s = 2, \ldots, n$ so that $q \mid t_s$. But for every $i = 1, \ldots, n $ we have $t_i \leq t_1 < 2q$. Hence there exists some $s = 2, \ldots, n$ with $t_s = q$ and thus $v_q(t_s!/t_s^{t_s}) = 1-q$. 
We conclude that 
\[
2-q+ \sum_{i = 2, i \neq s}^n v_q(t_i!/t_i^{t_i}) = 0.
\]
Hence there exist at least $q-2$ among the $t_i$'s that are greater than $q$. These along with $t_1$ and $t_s$ provide at least $q$ elements among the $t_i$'s that are $\geq q$. Hence $t_i \geq q$ for all $i = 1, \ldots, q$ (as these are $\geq$ of the $q$-previously picked $t_i$'s) and therefore
\[
\sum_{i = 1}^qt_i \geq q^2 > p^2 > \frac{p(p+1)}{2} 
\]
while 
\[
\sum_{i = 1}^{k} p_i \leq \sum_{k = 1}^{p_1} k = \frac{p(p+1)}{2} 
\]
We conclude that the vector $(t_1, t_2, \ldots, t_q, \ldots, t_n)$
weakly majorises the vector $(p_1, p_2, \ldots, p_k)$, while all the hypothesis of Proposition \ref{Prop:strict} (see appendix) are satisfied. Hence 
\[
\prod_{i = 1}^n\frac{t_i!}{t_i^{t_i}}< \prod_{i = 1}^k\frac{p_i!}{p_i^{p_i}} 
\]
contradicting the hypothesis of the theorem. 
This final contradiction implies that there does exist $t_i$ with $t_i = p$ and so, as we have seen, the inductive hypothesis completes the proof of the theorem. 
\end{proofof}

\noindent Combining Theorem~\ref{Thm:prodp_i} with Proposition~\ref{Prop:PFormula} we get the following.

\begin{corollary}
\label{Cor:p<q}
Assume that $P_G(H) = \frac{p!}{p^p} \cdot \frac{q!}{q^q}$ with distinct primes $p < q$, where $G$ is a finite group with subgroup $H$. Then the $t$-vector of $H$ in $G$ is given by $(q, p, 1, \cdots, 1);$
in particular, $pq\, \big\vert\, |H|$. 
\end{corollary}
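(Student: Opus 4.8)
The plan is to reduce the statement directly to Theorem~\ref{Thm:prodp_i} by means of Proposition~\ref{Prop:PFormula}. First I would invoke Proposition~\ref{Prop:PFormula} to write
\[
P_G(H) = \prod_{\sigma = 1}^s \frac{t_\sigma!}{t_\sigma^{t_\sigma}},
\]
where $(t_1, \ldots, t_s)$ is the list of sizes of the connected components of $\Gamma^G_H$, each satisfying $t_\sigma \mid |H|$ and $\sum_\sigma t_\sigma = n$. The key observation is that a component with $t_\sigma = 1$ contributes the factor $1!/1^1 = 1$ and may therefore be discarded; restricting $\sigma$ to the indices with $t_\sigma > 1$, we still obtain
\[
\prod_{t_\sigma > 1} \frac{t_\sigma!}{t_\sigma^{t_\sigma}} = P_G(H) = \frac{p!}{p^p}\cdot\frac{q!}{q^q}.
\]

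Next I would apply Theorem~\ref{Thm:prodp_i} to this identity, taking the right-hand side to be the product over the two distinct primes $p$ and $q$. Its hypotheses are met: the surviving $t_\sigma$ are integers strictly greater than $1$, and $p \neq q$ are primes. The conclusion then forces exactly two nontrivial components, whose sizes form the set $\{p, q\}$ after rearrangement. Listing \emph{all} component sizes in decreasing order and using $p < q$, the nontrivial entries appear as $q$ followed by $p$, with every remaining entry equal to $1$; hence the $t$-vector of $H$ in $G$ is $(q, p, 1, \ldots, 1)$, as claimed.

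Finally, for the divisibility assertion I would again use the fact $t_\sigma \mid |H|$ from Proposition~\ref{Prop:PFormula}, now applied to the two nontrivial entries: since $p \mid |H|$ and $q \mid |H|$ with $p, q$ distinct primes, coprimality yields $pq \mid |H|$. Because the whole argument is a short combination of the two quoted results, there is essentially no genuine obstacle; the only point demanding care is the bookkeeping of trivial components, so that Theorem~\ref{Thm:prodp_i}—whose standing assumption is $t_i > 1$—is invoked only after the factors corresponding to $t_\sigma = 1$ have been stripped away.
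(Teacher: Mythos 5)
Your proposal is correct and is essentially the paper's own argument: the paper simply states that the corollary follows by combining Proposition~\ref{Prop:PFormula} with Theorem~\ref{Thm:prodp_i}, and your write-up supplies exactly the intended details (stripping the trivial components so that the theorem's hypothesis $t_i>1$ applies, then reading off the divisibility from $t_\sigma \mid |H|$). No gaps.
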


\noindent With the help of Corollary~\ref{Cor:p<q}, we can now prove a result excluding certain rational numbers from the range of the function $\mathrm{tp}(-)$.

\begin{theorem}\label{Thm:p<q}
There exists no finite group $G$
such that $\mathrm{tp}(G) = \frac{p!}{p^p} \cdot \frac{q!}{q^q}$ for primes $p < q$.
\end{theorem}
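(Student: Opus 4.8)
The plan is to argue by induction on $|G|$, taking $G$ to be a counterexample of least order: a finite group with $\mathrm{tp}(G) = \frac{p!}{p^p}\cdot\frac{q!}{q^q}$ for some primes $p<q$, while no group of smaller order has this property. Choose $H\le G$ with $P_G(H) = \mathrm{tp}(G)$. By Corollary~\ref{Cor:p<q} the $t$-vector of $H$ is $(q,p,1,\ldots,1)$ and $pq\mid |H|$; in particular $q\mid |G|$, so subgroups of order $q$ exist. The key observation I would record is a rigidity statement: if $Q\le G$ has order $q$, then by Corollary~\ref{Cor:Hprime} we have $P_G(Q) = (q!/q^q)^{e}$, where $e = \big((G:Q)-(N_G(Q):Q)\big)/q$ is a non-negative integer, and minimality forces $e\le 1$. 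Indeed, $e\ge 2$ would give $P_G(Q)\le (q!/q^q)^2 < (p!/p^p)(q!/q^q) = \mathrm{tp}(G)$, the strict inequality holding because $p<q$ makes $q!/q^q < p!/p^p$; this contradicts $\mathrm{tp}(G)=\min_K P_G(K)$. Thus every subgroup of order $q$ is either normal in $G$ (the case $e=0$) or satisfies $e=1$.

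Next I would split on whether $G$ possesses a non-normal subgroup of order $q$. Suppose first that every subgroup of order $q$ is normal. Since $q\mid |H|$, Cauchy's theorem provides $Q'\le H$ of order $q$, and by assumption $Q'\unlhd G$. Passing to $\overline{G}=G/Q'$ and $\overline{H}=H/Q'$, Corollary~\ref{cor:normal} gives $P_{\overline G}(\overline H)=P_G(H)=(p!/p^p)(q!/q^q)$, while Proposition~\ref{Prop:structproperties}(ii) gives $\mathrm{tp}(\overline G)\ge \mathrm{tp}(G)$. Hence $\mathrm{tp}(\overline G)=(p!/p^p)(q!/q^q)$, so $\overline G$ is a counterexample with $|\overline G|=|G|/q<|G|$, contradicting the minimality of $G$.

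There remains the case that some $Q\le G$ of order $q$ is non-normal; then $e=1$ by the rigidity statement, i.e.\ $(G:Q)=(N_G(Q):Q)+q$, which rearranges to $|G| = |N_G(Q)| + q^2$. Consequently $|G|-q^2 = |N_G(Q)|$ divides $|G|$, so $|G|-q^2$ divides $q^2$ and hence lies in $\{1,q,q^2\}$, giving $|G|\in\{q^2+1,\,q(q+1),\,2q^2\}$. I would eliminate each possibility using $q\mid |G|$ together with the index bound $(G:H)=\sum_\sigma t_\sigma \ge q+p$ forced by the $t$-vector $(q,p,1,\ldots,1)$ (Proposition~\ref{Prop:PFormula}). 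The value $q^2+1$ is incompatible with $q\mid|G|$. If $|G|=2q^2$, then $p\mid 2q^2$ and $p\ne q$ force $p=2$, and $2q\mid|H|\mid 2q^2$ leaves only $|H|=2q$ (since $|H|=2q^2$ would mean $H=G$ and $P_G(G)=1\ne\mathrm{tp}(G)$), whence $(G:H)=q<q+p$, a contradiction. If $|G|=q(q+1)$, then $p\mid q+1$, the Sylow $q$-subgroup has order $q$, so $|H|=qd$ with $p\mid d\mid q+1$, and $(G:H)=(q+1)/d\le (q+1)/2<q+p$, again a contradiction.

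The main obstacle is the non-normal case: the heart of the argument is upgrading the single inequality $e=1$ to the sharp identity $|G|=|N_G(Q)|+q^2$ and then squeezing $|G|$ into the three candidate orders, after which the index constraint imposed by the prescribed $t$-vector disposes of each. The inductive reduction in the all-normal case is comparatively routine, once one notices that Corollary~\ref{cor:normal} transports the \emph{exact} value of $P_G(H)$ to the quotient, so that a counterexample of minimal order cannot survive.
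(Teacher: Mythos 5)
Your proof is correct, and while its skeleton coincides with the paper's (minimal counterexample; Corollary~\ref{Cor:Hprime} applied to a non-normal subgroup of order $q$ forces the exponent to be $1$ because $\frac{p!}{p^p}\cdot\frac{q!}{q^q} > \left(\frac{q!}{q^q}\right)^2$ when $p<q$; hence $|G|\in\{q(q+1),\,2q^2\}$), you diverge from the paper at both ends in ways worth recording. For the reduction, the paper first shows that the minimising subgroup $H$ is core-free by quotienting by $\mathrm{core}_G(H)$, so that \emph{every} order-$q$ subgroup of $H$ is automatically non-normal; you instead dichotomise on whether all order-$q$ subgroups of $G$ are normal and, in the all-normal case, quotient by one sitting inside $H$. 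Both are valid minimality arguments and cost the same. The more interesting difference is the endgame: the paper disposes of $|G|=q(q+1)$ via the monotonicity $P_G(K)\leq P_H(K)$ of Proposition~\ref{Prop:PGSub} (forcing $(H:K)=q+1$ and hence $H=G$), and of $|G|=2q^2$ via the divisibility $2=(H:K)\mid (N_G(K):K)=q$; you replace both with the single observation that the $t$-vector $(q,p,1,\ldots,1)$ from Corollary~\ref{Cor:p<q} forces $(G:H)=\sum_\sigma t_\sigma\geq q+p$, which is incompatible with the small indices $(G:H)=q$ and $(G:H)\leq (q+1)/2$ available in the two candidate orders. Your version is more uniform and slightly more elementary, needing only $pq\mid |H|$ and arithmetic on $|G|$, at the cost of not exhibiting the finer structural information (e.g.\ the position of $K$ in $H$) that the paper's case analysis reveals.
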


\begin{proofof}
Let $G$ be a counterexample of smallest possible order,
and let $H$ be a proper subgroup of $G$ for which $\mathrm{tp}(G)$ is realised; that is, 
\[
P_G(H) = \mathrm{tp}(G) = \frac{p!}{p^p} \cdot \frac{q!}{q^q}
\]
for distinct primes $p < q$. We argue that $H$ is core-free. Suppose not, and let $N \coloneqq \mathrm{core}_G(H)$. 
Then $N > 1$ and $P_{G/N}(H/N) = P_G(H)$ by Corollary~\ref{cor:normal}.
Moreover, by Part~(ii) of Proposition~\ref{Prop:structproperties}, we have 
\[
P_{G/N}(H/N) \geq \mathrm{tp}(G/N) \geq \mathrm{tp}(G) = P_G(H).
\]
Therefore, $\mathrm{tp}(G/N) = \mathrm{tp}(G)$, contradicting the minimality of $G$. Thus $H$ is core-free, as claimed.

\noindent The prime $q$ divides $|H|$ according to Corollary \ref{Cor:p<q}. Let $K < H$ be of order $q$. Then $K$ is not normal in $G$, as $H$ is core-free. Also 
\[
P_G(K) = \left(\frac{q!}{q^q}\right)^{\frac{n-m}{q}},
\]
in view of Corollary \ref{Cor:Hprime}, where $n = (G:K)$, $m = (N_G(K):K)$, $m\mid n$, and $m \neq n$.
In addition, we have 
\[
\left(\frac{q!}{q^q}\right)^{\frac{n-m}{q}} = P_G(K) \geq \mathrm{tp}(G) = \frac{p!}{p^p} \cdot \frac{q!}{q^q}.
\]
But $p <q$, and thus $ \frac{p!}{p^p} > \frac{q!}{q^q}$. So the above inequality holds 
if, and only if, $\frac{n-m}{q} = 1$, which in turn, as $m \mid n$, implies that $m \mid q$. 
Hence either $m = 1$ and $n = q+1$, or $m = q$ and $n = 2q$. 

\noindent In the first case, we get $K = N_{G}(K)$, and thus $K = N_H(K)$, while $|G| = q(q+1)$. 
Furthermore, by Proposition~\ref{Prop:PGSub} and Corollary~\ref{Cor:Hprime}, we have 
\[
\frac{q!}{q^q} = P_G(K) \leq P_H(K) = \left( \frac{q!}{q^q}\right)^{\frac{(H:K)-1}{q}}.
\]
Hence, $0 \leq \frac{(H:K) - 1}{q} \leq 1$. As $H \neq K$, 
we necessarily have $(H:K) - 1 = q$, 
and thus $(H:K) = q+1 = (G:K)$ or, equivalently, $G = H$, 
contradicting the fact that we had chosen $H$ as a proper subgroup of $G$. 
So the first case, where $m = 1$, does not occur.

\noindent Consequently, we must have $m = q$, $n = 2q$, and $|G| = 2q^2$. 
This forces $p = 2$, the only other prime involved in the order of $G$. 
As $H < G$, while $2$ and $q$ both divide $H$ by Corollary~\ref{Cor:p<q}, 
we get $|H| = 2q$, thus $K$ is a normal subgroup of $H$. 
So $H \leq N_G(K)$, and thus 
\[
2 = (H:K)\, \big\vert\, (N_G(K):K) = q. 
\]
This final contradiction now implies the theorem.
\end{proofof}

\noindent We can now show that if for a specific subgroup $H \leq G$ we know that $P_G(H) = \frac{p!}{p^p}$ for some prime $p,$  then $H$ has a very restricted place inside $G$.

\begin{lemma}\label{Lem:p/p}
Let $G$ be a finite group, and let $H\leq G$ be a subgroup. 
If $P_G(H) = \frac{p!}{p^p}$ for some prime $p,$ 
then $p \,\big\vert\, |H|$ and one of the following occurs:
\vspace{-2mm}
\begin{enumerate}
\item[(i)] $H = N_G(H)$ and $(G : H) = p + 1,$ or 
\vspace{1.5mm}
\item[(ii)] $(N_G(H):H) = p,$ while $(G : H) = 2p$.
\end{enumerate}
In particular, $P_G(H) = 1/2$ implies that $H$ is a subgroup of even order, whose index in $G$ is either $3$ or $4$.
\end{lemma}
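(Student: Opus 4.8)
The plan is to combine the product formula of Proposition~\ref{Prop:PFormula} with the number-theoretic rigidity of Theorem~\ref{Thm:prodp_i}, and then read off the arithmetic of the index from the shape of the $t$-vector.

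First I would invoke Proposition~\ref{Prop:PFormula} to write
\[
P_G(H) = \prod_{\sigma=1}^s \frac{t_\sigma!}{t_\sigma^{t_\sigma}} = \frac{p!}{p^p},
\]
where $s = \vert H\backslash G/H\vert$, each $t_\sigma \,\vert\, \vert H\vert$, and $\sum_{\sigma} t_\sigma = n = (G:H)$. The factors with $t_\sigma = 1$ contribute $1$ to the product and may be discarded; writing $u_1, \ldots, u_r$ for those $t_\sigma$ with $t_\sigma \geq 2$, the equation becomes $\prod_{j=1}^r \frac{u_j!}{u_j^{u_j}} = \frac{p!}{p^p}$. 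Now Theorem~\ref{Thm:prodp_i}, applied with the single prime $p$ on the right-hand side, forces $r = 1$ and $u_1 = p$. Hence the $t$-vector of $H$ in $G$ is $(p, 1, \ldots, 1)$, with exactly one entry equal to $p$ and all others equal to $1$. Since each $t_\sigma$ divides $\vert H\vert$, this gives $p \,\vert\, \vert H\vert$ at once.

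Next I would pin down the index. Let $m$ denote the number of trivial ($K_{1,1}$) components of $\Gamma^G_H$. By \eqref{Eq:mForm}, with $K = H$ (which is trivially conjugate to itself), we have $m = (N_G(H):H)$, and the $t$-vector shows that there are exactly $m$ entries equal to $1$, so
\[
n = \sum_{\sigma=1}^s t_\sigma = p + m.
\]
The crucial arithmetic point is that $m \,\vert\, n$: from the tower $H \leq N_G(H) \leq G$ we get $n = (G:H) = (G:N_G(H))\cdot (N_G(H):H) = (G:N_G(H))\cdot m$. Combining $m \,\vert\, n$ with $n = p + m$ yields $m \,\vert\, p$, whence $m = 1$ or $m = p$ as $p$ is prime. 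The case $m = 1$ gives $N_G(H) = H$ and $n = p+1$, which is conclusion~(i); the case $m = p$ gives $(N_G(H):H) = p$ and $n = 2p$, which is conclusion~(ii).

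Finally, for the special case $P_G(H) = 1/2$, observe that $1/2 = 2!/2^2$, so that $p = 2$; the two cases then read $(G:H) = 3$ and $(G:H) = 4$ respectively, while $p \,\vert\, \vert H\vert$ says that $\vert H\vert$ is even. I expect the only genuinely delicate point to be the bookkeeping needed to strip off the trivial factors so that the hypothesis $t_i > 1$ of Theorem~\ref{Thm:prodp_i} is met; everything after that reduces to the short divisibility argument above.
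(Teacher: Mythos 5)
Your proposal is correct and follows essentially the same route as the paper: apply Proposition~\ref{Prop:PFormula} together with Theorem~\ref{Thm:prodp_i} to force a single non-trivial component of size $p$ (whence $p \mid \lvert H\rvert$), then use $n = m + p$ with $m = (N_G(H):H)$ dividing $n$ to get $m \in \{1, p\}$. The bookkeeping step you flag (discarding the $t_\sigma = 1$ factors before invoking Theorem~\ref{Thm:prodp_i}) is exactly the implicit step in the paper's argument and is handled correctly.
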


\begin{proofof}
Suppose that $P_G(H) = p!/p^p = \prod_\sigma t_\sigma!/t_\sigma^{t_\sigma}$ 
for some prime $p$, and let $m = (N_G(H) : H)$. 
Combining Proposition~\ref{Prop:PFormula} with Theorem~\ref{Thm:prodp_i}, 
we see that $\Gamma^G_H$ has precisely one non-trivial component 
$\Delta_{\sigma_1} \cong K_{t_{\sigma_1},\, t_{\sigma_1}}$, and that $t_{\sigma_1} = p$. 
Hence, $p\, \big\vert\, \vert H\vert$, and we have $n = (G:H) = m+p$. 
Since $m \mid n$, we get $m \mid p$. 
Thus, either $m = 1$ (that is, $N_G(H) = H$) and $n = p+1$, or $m = p$ and $n = 2p$, 
giving $(N_G(H) : H) = p$ in the second case.
\end{proofof}

\noindent Our next result takes a major step towards classifying groups $G$ 
where $\mathrm{tp}(G) = p!/p^p$ for some prime $p$.

\begin{theorem}\label{thm:classify-p}
Let $G$ be a finite group. 
Assume that, for every non-normal subgroup 
$H \leq G,$ we have $P_G(H) = p!/p^p$ 
with some fixed prime $p$. 
Then one of the following holds: 
\vspace{-2mm}
\begin{enumerate}
\item[(i)] $G$ is a Dedekind group. 
In this case, all subgroups are normal and $\mathrm{tp}(G) = 1$. 
\vspace{1.5mm}
\item[(ii)] $G \cong C_3 \rtimes C_{2^n}$, for some integer $n\geq 1$. 
\vspace{1.5mm}
\item[(iii)] $G \cong D_4$.
\vspace{1.5mm}
\item[(iv)] $G \cong Q_{16}$.
\vspace{1.5mm}
\item[(v)] $G \cong C_4 \rtimes C_4$.
\end{enumerate}
In Cases~{\em (ii)--(v),} we have $p = 2$ and $\mathrm{tp}(G) = 1/2$.
\end{theorem}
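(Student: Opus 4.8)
The plan is to extract strong structural constraints from Lemma~\ref{Lem:p/p} and then to split according to the Sylow structure. If $G$ is a Dedekind group we are in case~(i), so assume throughout that $G$ is not Dedekind; then $G$ has a non-normal subgroup and, by hypothesis, every non-normal $H \leq G$ satisfies $P_G(H) = p!/p^p$. By Lemma~\ref{Lem:p/p} each such $H$ has $p \mid |H|$ and index either $p+1$ (with $H$ self-normalising) or $2p$ (with $(N_G(H):H) = p$). The decisive consequence is that \emph{every subgroup of order prime to $p$ is normal}: such a subgroup cannot be one of the non-normal subgroups just described, whose orders are divisible by $p$. In particular every Sylow $q$-subgroup with $q \neq p$ is normal, so the product $N$ of these is a normal Hall $p'$-subgroup, it is a Dedekind group (all of its subgroups being normal in $G$), and $G = N \rtimes P$ for a Sylow $p$-subgroup $P$.

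Next I would show that $p = 2$ by assuming $p$ odd and deriving a contradiction. If $P$ is non-Dedekind, pick $H < P$ non-normal in $P$, hence non-normal in $G$; then $(G:H) = |N|\,(P:H)$ with $(P:H) = p^{j}$, $j \geq 1$, so $p \mid (G:H)$, which rules out $p+1$ and forces $|N|\,p^{j} = 2p$. For odd $p$ this gives $j = 1$ and $|N| = 2$, whence $N = C_2$ is central and $C_2 \times H$ is a non-normal subgroup of index $(P:H) = p \notin \{p+1, 2p\}$ --- a contradiction. If instead $P$ is Dedekind, it must be non-normal (otherwise $G = N \times P$ is Dedekind), so $(G:P) = |N| \in \{p+1, 2p\}$; since $|N|$ is prime to $p$ this forces $|N| = p+1$ with $P$ self-normalising. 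Then $N$ acts regularly on the $p+1$ Sylow $p$-subgroups and, correspondingly (by Sylow counting, $P$ having no further fixed Sylow), $P$ acts transitively by conjugation on the $p$ non-identity elements of $N$; hence these are all $G$-conjugate and the Dedekind group $N$ is elementary abelian of order $p+1 = \ell^{d}$. For odd $p$ this forces $\ell = 2$, and the order-two subgroups of $N$, being $p'$-subgroups permuted transitively by $P$, are non-normal unless $d = 1$ --- but $d = 1$ gives the absurdity $p = 1$. This establishes $p = 2$, so that $p!/p^p = 1/2$.

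With $p = 2$ in force, suppose first that $N \neq 1$. If $P$ were normal then $G = N \times P$ would either be Dedekind (if $P$ is) or contain a non-normal $2$-subgroup $H_P \leq P$ of index $|N|\,(P:H_P) \geq 6 \notin \{3,4\}$; both are impossible, so $P$ is non-normal and $(G:P) = |N| \in \{3,4\}$, forcing $|N| = 3$ (being odd) and $P$ self-normalising. Any non-normal subgroup contained in $P$ would have index $3\cdot 2^{j} \in \{3,4\}$, hence equal $P$; thus every proper subgroup $Q < P$ is normal in $G$. But a proper $Q \unlhd G$ with $Q \leq P$ satisfies $[Q,N] \leq Q \cap N = 1$, so $Q \leq C_P(N) =: P_0$, a subgroup of index $2$. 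Therefore every proper subgroup of $P$ lies in $P_0$, so $P_0$ is the unique maximal subgroup of $P$ and $P$ is cyclic. The action of $P = C_{2^n}$ on $N = C_3$ is non-trivial (else $P \unlhd G$), hence inversion, and we obtain $G \cong C_3 \rtimes C_{2^n}$ as in \eqref{Eq:CpC2^n}, i.e.\ case~(ii); here $\mathrm{tp}(G) = 1/2$ by Lemma~\ref{Lem:A4-A5}(i).

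Finally, if $N = 1$ then $G$ is a non-Dedekind $2$-group, and since index $3$ is impossible in a $2$-group, every non-normal subgroup has index exactly $4$. The plan is to bound $|G|$ and then inspect the finitely many survivors. Once $|G| \geq 16$, every subgroup of order at most $4$ has index at least $8 > 4$ and is therefore normal; in particular every involution is central. If $G$ has a unique involution it is cyclic (excluded) or generalised quaternion, and a direct computation shows that $Q_{2^n}$ has a non-normal subgroup of index $2^{n-2} > 4$ as soon as $n \geq 5$, leaving only $Q_{16}$. If $G$ has several (central) involutions, passing to quotients by a central Klein four-subgroup together with the index-$4$ constraint on all non-normal subgroups bounds the order, and a GAP inspection of the non-Dedekind $2$-groups of orders $8$ and $16$ (as already used in Lemma~\ref{Lem:A4-A5}) leaves precisely $D_4$, $Q_{16}$ and $C_4 \rtimes C_4$, each with $\mathrm{tp} = 1/2$ by Lemma~\ref{Lem:A4-A5}(iii). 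This two-group classification --- and in particular pinning down the order bound tightly enough that the finite check suffices --- is the main obstacle; by contrast the reductions to $p = 2$ and to $G = N \rtimes P$, and the entire case $N \neq 1$, are comparatively direct.
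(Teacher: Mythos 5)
Your reduction is sound and genuinely different from the paper's: you decompose $G = N \rtimes P$ with $N$ a normal (Dedekind) Hall $p'$-subgroup, obtained from the observation that Lemma~\ref{Lem:p/p} forces every non-normal subgroup to have order divisible by $p$; the paper instead fixes one non-normal $H$, shows all its proper subgroups are normal in $G$ so that $H\cong C_{p^k}$, and kills odd $p$ by exhibiting a non-normal subgroup of order $2$. Your treatment of odd $p$ (both the non-Dedekind-$P$ case via the index-$p$ subgroup $N\times H$, and the Dedekind-$P$ case via the transitive action of $P$ on $N\setminus\{1\}$) and of the case $N\neq 1$, $p=2$ (cyclicity of $P$ because $C_P(N)$ is its unique maximal subgroup) are correct modulo small details you leave implicit --- e.g.\ that $N\times H$ is non-normal because otherwise $(N\times H)\cap P = H$ would be normal in $P$.

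The genuine gap is in the last step, the classification of non-Dedekind $2$-groups all of whose non-normal subgroups have index $4$. Two problems. First, the assertion that once $|G|\geq 16$ every subgroup of order at most $4$ has index at least $8>4$ is false when $|G|=16$ (a subgroup of order $4$ then has index exactly $4$); you need $|G|\geq 32$ for subgroups of order $4$, and only the order-$\leq 2$ case works at $|G|=16$. Second, and more seriously, in the several-involutions branch you never actually establish an order bound: \emph{passing to quotients by a central Klein four-subgroup bounds the order} is a hope, not an argument, and your GAP inspection covers only orders $8$ and $16$. The paper closes this by a minimal-counterexample induction: if $|G|\geq 32$, pick a non-normal $H$ of index $4$ and an order-$2$ subgroup $W\leq H$; then $W$ is normal in $G$ (its index is at least $16$, hence not $4$), $G/W$ inherits the property with $H/W$ non-normal of index $4$, so $|G/W|\leq 16$ and $|G|=32$ --- and one must then still verify (the paper does this with GAP) that no group of order $32$ qualifies. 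Your sketch omits both the correct choice of subgroup to quotient by (an involution \emph{inside} $H$, which is automatically central here) and the order-$32$ verification, so that branch is not closed; your unique-involution branch, by contrast, is complete and even avoids GAP.
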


\begin{proofof}
Clearly, $\mathrm{tp}(G) = 1$ 
if, and only if, every subgroup of $G$ is normal 
or, equivalently, if, and only if, $G$ is a Dedekind group.
Thus, discarding Case~(i), we assume from now on that $\mathrm{tp}(G) < 1$. 

\noindent Fix some non-normal subgroup $H$ of $G$. Then, by hypothesis, $P_G(H) = p!/p^p$; thus, by Lemma~\ref{Lem:p/p}, we have $(G:H) = p+1$ or $(G:H) = 2p$, as well as $p\, \big\vert\, \vert H\vert$. 

\noindent \textbf{Claim:}\label{claim}
Every proper subgroup of $H$ is normal in $G$ 
and $H \cong C_{p^k}$, a cyclic group of order $p^k$ for some positive integer $k$.

\begin{proofofclaim} If $(G:H) = p+1$ and $K < H$, 
then clearly $(G:K) \neq p+1$. Also, $(G:K) \neq 2p$, or else $2p = r(p+1)$ for some integer $r\geq 2$. 
This implies that $p \mid r$, while $2 = r+r/p > 2$, a contradiction. If $(G:H) = 2p$ and $K < H$, 
then $(G:K) \geq 4p$; in particular, $(G:K) \neq p+1, 2p$. 
Hence, in both cases, $K$ is normal, 
and the first part of the claim follows.

\noindent If $M_1, M_2$ are distinct maximal subgroups of $H$, 
then they are normal in $G$, and their product is $M_1M_2 = H$. 
This would imply that $H$ is normal in $G$, contrary to our assumption. 
Hence, $H$ has a unique maximal subgroup; 
thus, $H$ is cyclic of prime power order. 
Moreover, as $p \,\big\vert\, |H|$, 
we conclude that $H \cong C_{p^k}$ for some $k\geq 1$, and our claim follows.
\end{proofofclaim}

\noindent For the rest of the proof, we shall need to distinguish two cases.

\noindent {\bf Case 1:} \emph{$p$ is odd.} As we have seen, we either have $(G:H) = p+1$ and $H = N_G(H)$, 
or $(G:H) = 2p$ and $\vert N_G(H)| = p^{k+1}$. 
Hence, either $|G| = p^k(p+1)$, or $|G| = 2p^{k+1}$.
Therefore, in either case, $2 \big\vert |G|$, so that there exists some $x \in G$ of order 2. Let $T = \langle x\rangle$. Clearly, $T$ 
is not a subgroup of $N_G(H)$ (in either case). 
This implies that $T$ cannot be normal in $G$. 
Indeed, if $T \unlhd G$,
then $x \in Z(G)$, and so $T$ would be a subgroup of $N_G(H)$. It follows that $p\, \big\vert\, \vert T\vert = 2$, forcing $p = 2$, which contradicts our case assumption. Consequently, $p$ has to be even.

\noindent {\bf Case 2:} $p = 2$. Here, there are two subcases:\\[1.5mm] 
(a) $N_G(H) = H$ and $(G:H) = 3$, or\, (b) $(N_G(H):H) = 2 = (G:N_G(H))$. 

\noindent \textit{Subcase~ $(a)$.} By our claim proved above, $H \cong C_{2^n}$ for some positive integer $n$. Since $(G:H) = 3$, so that $\vert G\vert = 3\cdot 2^n$, it follows that, if $Q$ is a Sylow $3$-subgroup of $G$, then $Q \cong C_3$. Moreover, $Q$ is normal in $G$, since otherwise $P_G(Q) = 1/2$, so that $Q$ would have to have even order by Lemma~\ref{Lem:p/p}. By a well-known theorem of Zassenhaus (see, for instance, \cite[Hauptsatz~I.18.1]{Huppert1}), we have 
\[
G\,\cong\, \left\langle a, b\,\big\vert\, a^3 = b^{2^n} = 1,\, a^b = a^{-1}\right\rangle \,\cong C_3 \rtimes C_{2^n},\quad n\geq 1,
\] 
since $\mathrm{tp}(G) < 1$, whence Case~(ii) of the theorem. 

\noindent \textit{Subcase~$(b)$.} Now suppose that
\[
(G : N_G(H)) = 2 = (N_G(H) : H),
\]
where $H \cong C_{2^n}$ for some $n\geq 1$, so that $G$ is a $2$-group. At this point, we are tasked with determining the collection 
of all $2$-groups $G$ of order $\vert G\vert \geq 8$, which enjoy the following property:
\[
\mbox{\it All subgroups of $G$ are normal in $G$ except
some subgroups of index $4$.}\tag*{($\dagger$)}
\]

Note that we are not allowing for any ambiguity in ($\dagger$);
there \emph{must} exist non-normal subgroups of index $4$.
The only group of order $8$ with this property 
is clearly $D_4$ and we use GAP to find the groups
of order $16$: they are $Q_{16}$ and \texttt{SmallGroup(16,4)},
which is of type $C_4 \rtimes C_4$.

\noindent We now argue that those are the only $2$-groups with this property.
To prove the assertion, we shall show that, if $G$
satisfies $(\dagger)$, then $|G| \leq 16$. 
Indeed, suppose that $G$ is a minimal counterexample to our last claim, 
and let $H$ be a non-normal subgroup of index $4$ in $G$. 
Let $N$ be a subgroup of order $2$ in $H$, 
and observe that $N$ is normal in $G$, 
since $\vert G\vert \geq 32$, and $G$ satisfies $(\dagger)$. 
By the Correspondence Theorem, $G/N$ satisfies $(\dagger)$ 
and is not a counterexample. 
Thus, $\vert G/N\vert \leq 16$, and so $\vert G\vert \leq 32$ 
which implies that $\vert G\vert = 32$. 
Using GAP again, we check that no group of order $32$ satisfies $(\dagger)$, 
and we have reached the desired contradiction. 
Hence, our claim holds, and the only $2$ groups satisfying $(\dagger)$ are those exhibited above.
\end{proofof}

\noindent As a consequence of Theorem~\ref{thm:classify-p}, we can now determine, up to isomorphism, all groups $G$ with $\mathrm{tp}(G) = 1/2$.

\begin{corollary}\label{Cor:tp = 1/2}
Let $G$ be a finite group. 
We have $\mathrm{tp}(G) = 1/2$ if, and only if, 
one of the following occurs:
\vspace{-2mm}
\begin{enumerate}
\item[(i)] $G = C_3 \rtimes C_{2^n}$, for some integer $n\geq 1,$
\vspace{1.5mm}
\item[(ii)] $G = D_4,$
\vspace{1.5mm}
\item[(iii)] $G = Q_{16},$
\vspace{1.5mm}
\item[(iv)] $G = C_4 \rtimes C_4$.
\end{enumerate}
\end{corollary}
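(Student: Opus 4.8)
The plan is to read off the corollary from Theorem~\ref{thm:classify-p} together with the explicit values recorded in Lemma~\ref{Lem:A4-A5}. The direction asserting that each listed group has common transversal probability $1/2$ is immediate: part~(i) of Lemma~\ref{Lem:A4-A5} with $p = 3$ gives $\mathrm{tp}(C_3 \rtimes C_{2^n}) = (1/2)^{(3-1)/2} = 1/2$; part~(ii) with $n = 4$ gives $\mathrm{tp}(D_4) = (1/2)^{(4-2)/2} = 1/2$; and part~(iii) furnishes $\mathrm{tp}(Q_{16}) = \mathrm{tp}(C_4 \rtimes C_4) = 1/2$ directly. This settles the ``if'' direction with no further work.

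The substance lies in the converse, and the key observation is that the single hypothesis $\mathrm{tp}(G) = 1/2$ already forces $P_G(H) = 1/2$ for \emph{every} non-normal subgroup $H \leq G$; this is precisely the hypothesis of Theorem~\ref{thm:classify-p} with the fixed prime $p = 2$. To establish it, I would first record the uniform upper bound $P_G(H) \leq 1/2$ valid for every non-normal $H$: by Proposition~\ref{Prop:PFormula} we have $P_G(H) = \prod_{\sigma} t_\sigma!/t_\sigma^{t_\sigma}$, and non-normality of $H$ guarantees at least one component with $t_\sigma \geq 2$, contributing a factor $t_\sigma!/t_\sigma^{t_\sigma} \leq 2!/2^2 = 1/2$, while every remaining factor is at most $1$. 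Combining this with the trivial lower bound $P_G(H) \geq \mathrm{tp}(G) = 1/2$ pins down $P_G(H) = 1/2$ for each non-normal $H$ simultaneously, as required.

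With this in hand, Theorem~\ref{thm:classify-p} applied with $p = 2$ leaves only its five listed possibilities for $G$. Since $\mathrm{tp}(G) = 1/2 \neq 1$, the group $G$ is not Dedekind by Corollary~\ref{Cor:P = 1}, so Case~(i) of that theorem is excluded; the surviving Cases~(ii)--(v) are exactly the four groups $C_3 \rtimes C_{2^n}$, $D_4$, $Q_{16}$, and $C_4 \rtimes C_4$ in the statement. The only step requiring genuine care—and the mild obstacle here—is the clean passage from equality of the minimum to equality of $P_G(H)$ across all non-normal $H$ at once; once the uniform bound $P_G(H) \leq 1/2$ is in place, the heavy classification of Theorem~\ref{thm:classify-p} does the remaining work and no separate case analysis is needed.
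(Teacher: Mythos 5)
Your proposal is correct and follows essentially the same route as the paper, whose proof is the one-line observation that the corollary follows from Theorem~\ref{thm:classify-p} together with Corollary~\ref{Cor:Prod}; your uniform bound $P_G(H)\leq 1/2$ for non-normal $H$ is exactly the content of Corollary~\ref{Cor:Prod} (with $H=K$), re-derived from Proposition~\ref{Prop:PFormula}, and the squeeze against $\mathrm{tp}(G)=1/2$ is the intended way to verify the hypothesis of Theorem~\ref{thm:classify-p} with $p=2$. The forward direction via Lemma~\ref{Lem:A4-A5} (or via the final assertion of Theorem~\ref{thm:classify-p} itself) is likewise as in the paper.
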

\begin{proofof}
This follows from Theorem~\ref{thm:classify-p} in conjunction with Corollary~\ref{Cor:Prod}.
\end{proofof}

\noindent As a further application of Theorem~\ref{thm:classify-p}, we have the following.

\begin{corollary}
\label{Cor:NonAbOdd}
If $G$ is a non-abelian group of odd order, then $\mathrm{tp}(G) \leq 4/81$.
\end{corollary}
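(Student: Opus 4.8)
The plan is to argue by contradiction and reduce everything to the classification already obtained in Theorem~\ref{thm:classify-p}. Write $4/81 = (2/9)^2 = \left(\tfrac{3!}{3^3}\right)^2$, and suppose towards a contradiction that $\mathrm{tp}(G) > 4/81$, so that $P_G(H) > 4/81$ for \emph{every} non-normal subgroup $H \leq G$. The key idea is that the oddness of $\vert G\vert$ is so restrictive that the only way to keep each $P_G(H)$ above $4/81$ is to have $P_G(H) = 3!/3^3$ for every non-normal $H$; this is precisely the hypothesis of Theorem~\ref{thm:classify-p} with the prime $p = 3$, which we then contradict.

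First I would fix a non-normal subgroup $H$ and analyse $P_G(H) = \prod_\sigma t_\sigma!/t_\sigma^{t_\sigma}$ via Proposition~\ref{Prop:PFormula}, using that each $t_\sigma$ divides $\vert H\vert$. Since $\vert G\vert$ is odd, so is $\vert H\vert$, whence every $t_\sigma$ is odd; as $H$ is non-normal at least one component is non-trivial, and being odd and $\geq 2$ it satisfies $t_\sigma \geq 3$. Recalling that $n \mapsto n!/n^n$ is strictly decreasing (as noted in the proof of Theorem~\ref{Thm:prodp_i}), two estimates finish this step: if $H$ had two or more non-trivial components then $P_G(H) \leq (3!/3^3)^2 = 4/81$, while if it had a single non-trivial component of size $t_\sigma \geq 5$ then $P_G(H) \leq 5!/5^5 = 24/625 < 4/81$. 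Both possibilities contradict $P_G(H) > 4/81$, so the only surviving configuration is a unique non-trivial component of size $3$, giving $P_G(H) = 3!/3^3 = 2/9$.

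Having shown that every non-normal subgroup $H$ satisfies $P_G(H) = p!/p^p$ with the fixed prime $p = 3$, I would invoke Theorem~\ref{thm:classify-p}: the group $G$ must be one of the listed groups. Cases (ii)--(v) all carry $p = 2 \neq 3$ (and are moreover of even order), so they are excluded; and in Case~(i) the group $G$ is Dedekind, hence abelian, since a non-abelian Dedekind (i.e. Hamiltonian) group contains a copy of $Q_8$ and so has even order. This contradicts $G$ being non-abelian of odd order and completes the argument, yielding $\mathrm{tp}(G) \leq 4/81$.

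I expect the reduction itself to be routine once the parity observation is in place; the only point requiring a little care is the borderline single-component-of-size-$3$ case, which is exactly what channels the situation into Theorem~\ref{thm:classify-p}. The main conceptual obstacle is thus recognising that the corollary is really a thin wrapper around that classification, combined with the elementary fact that Dedekind groups of odd order are abelian.
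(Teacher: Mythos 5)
Your proposal is correct and follows essentially the same route as the paper: observe that odd order forces every non-trivial component size $t_\sigma \geq 3$, deduce the dichotomy $\mathrm{tp}(G) \leq 4/81$ or $P_G(H) = 2/9$ for all non-normal $H$, and then rule out the second alternative via Theorem~\ref{thm:classify-p} together with the classification of Dedekind groups. Your write-up is in fact slightly more explicit than the paper's about why a single non-trivial component of size exactly $3$ is the only surviving configuration, but the argument is the same.
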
 

\begin{proofof}
Let $H \leq G$ be a non-normal subgroup. As 
\[
m = (N_G(H): H) < (G:H) = n, 
\]
the graph $\Gamma^G_H$ has at least one non-trivial component $\Delta_\sigma \cong K_{t_\sigma,\,t_\sigma}$, and since $1 < t_\sigma \big\vert \vert H\vert \big\vert \vert G\vert$ by Proposition~\ref{Prop:PFormula}, and $G$ has odd order, we have $t_\sigma \geq 3$. It follows that, for any such group $G$, either $\mathrm{tp}(G) \leq 4/81$, or $P_G(H) = 2/9$ for all subgroups $H$ of $G$ which are not normal in $G$. In the second case, Theorem~\ref{thm:classify-p} applies, so that $G$ would have to be one of the groups listed in (i)--(v). However, by the well-known classification of Dedekind groups, Case~(i) does not apply, while Cases (ii)--(v) are ruled out as $G$ has odd order by hypothesis. Hence, $\mathrm{tp}(G) \leq 4/81$, as claimed.
\end{proofof}

\noindent Our next result determines all groups $G$ with $\mathrm{tp}(G) = 1/4$.

\begin{theorem}\label{thm:tp = 1/4}
Let $G$ be a finite group. 
We have $\mathrm{tp}(G) = 1/4$ if, and only if, 
one of the following occurs:
\begin{enumerate}
\item[(i)] $G = \left\langle a, b \,\vert\, a^5 = b^{2^k} = 1,\, a^b = a^{-1}\right\rangle$ for some integer $k \geq 1$.
\vspace{1.5mm}
\item[(ii)] $G$ has a normal cyclic $2$-subgroup $M = C_{2^k}$ 
for some integer $k \geq 0$, 
so that $G/M$ is one of the groups 
$D_6$, $M_4(2)$, $C_4 \circ D_4$, $C_2 \times D_4$, 
or $C^2_2 \rtimes C_4$. 
\end{enumerate}
\end{theorem}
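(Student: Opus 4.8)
The plan is to prove both implications, the forward (``only if'') direction being the substantial one; the engine is to convert the single number $\mathrm{tp}(G)=1/4$ into a uniform constraint on \emph{every} subgroup. Since $x\mapsto x!/x^x$ is strictly decreasing with $2!/2^2=1/2$ and $3!/3^3=2/9<1/4$, Proposition~\ref{Prop:PFormula} shows that a product $\prod_\sigma t_\sigma!/t_\sigma^{t_\sigma}$ can be $\geq 1/4$ only when every nontrivial $t_\sigma$ equals $2$ and at most two such factors occur. Hence $\mathrm{tp}(G)=1/4$ forces $P_G(K)\in\{1,\,1/2,\,1/4\}$ for every $K\leq G$, the $t$-vector of a non-normal subgroup being $(2,1,\dots,1)$ or $(2,2,1,\dots,1)$. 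This local dichotomy is what I would exploit throughout.

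Next I would locate $G$ in the soluble hierarchy. As $1/4$ exceeds each of the thresholds $(1/2)^{40}$, $(1/2)^8$ and $4/81$, Proposition~\ref{Prop:solubilitycriterion} together with Theorems~\ref{Thm:supersol} and~\ref{Thm:nilp} apply successively. Every exceptional section they permit has transversal probability strictly below $1/4$ --- namely $\mathrm{tp}(A_5)=(1/2)^{14}$, $\mathrm{tp}(A_4)=2/9$ and $\mathrm{tp}(D_7)=(1/2)^3$ by Lemma~\ref{Lem:A4-A5} --- so section-monotonicity (Proposition~\ref{Prop:structproperties}(iii)) forbids them. What survives is that $G$ is supersoluble and is either nilpotent or carries a section isomorphic to $D_3\cong S_3$ (value $1/2$) or to $D_5$ (value $1/4$), and no other bad section.

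For nilpotent $G$ I would first show that $G$ is a $2$-group. Writing $G=P\times A$ with $A$ of odd order and $P$ a Sylow $2$-subgroup, a product subgroup $K\times 1$ has $t$-vector equal to that of $K$ in $P$ repeated $|A|$ times, so $P_G(K\times 1)=P_P(K)^{|A|}$. If $A\neq 1$ then either $A$ is non-abelian, whence $\mathrm{tp}(G)\leq\mathrm{tp}(A)\leq 2/9<1/4$, or $A$ is abelian and $P$ is non-Dedekind (as $G$ is not a Dedekind group), whence a non-normal $K\leq P$ gives $P_G(K\times 1)\leq(1/2)^{|A|}\leq(1/2)^3<1/4$; both contradict $\mathrm{tp}(G)=1/4$. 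Thus $A=1$ and $G$ is a non-Dedekind $2$-group, which I would classify by passing modulo a suitable normal cyclic $2$-subgroup $M=C_{2^k}$ via Corollary~\ref{cor:normal}, reducing to base groups of order $16$ that a GAP computation identifies as $M_4(2)$, $C_4\circ D_4$, $C_2\times D_4$ and $C_2^2\rtimes C_4$ --- precisely the $2$-group quotients of case~(ii). In the non-nilpotent case the admissible section dictates the structure within the supersoluble group $G$: a $D_5$-section forces a normal $C_5$ inverted by a cyclic $2$-group, giving the Frobenius-type family $C_5\rtimes C_{2^k}$ of case~(i) (of value $1/4$ by Lemma~\ref{Lem:A4-A5}(i) with $p=5$), whereas a $D_3$-section in the absence of a $D_5$-section yields, after factoring out a normal cyclic $2$-subgroup, the quotient $D_6$ of case~(ii).

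The reverse direction is then comparatively short: case~(i) is exactly Lemma~\ref{Lem:A4-A5}(i), each of the five base quotients has $\mathrm{tp}=1/4$ by direct computation (consistent with $\mathrm{tp}(D_6)=1/4$ from Lemma~\ref{Lem:A4-A5}(ii)), and for a nontrivial $M=C_{2^k}$ one has $\mathrm{tp}(G)\leq\mathrm{tp}(G/M)=1/4$ by Proposition~\ref{Prop:structproperties}(ii). I expect the two genuine obstacles to be (a) the $2$-group classification, i.e. proving that every non-Dedekind $2$-group with $\mathrm{tp}=1/4$ collapses modulo a cyclic normal $2$-subgroup onto one of the four groups of order $16$; and (b) the matching lower bound $\mathrm{tp}(G)\geq 1/4$ for the cyclic extensions, where one must check that no subgroup $K$ not containing $M$ produces a component $K_{t,t}$ with $t\geq 3$ or a third nontrivial component, which is what would push $P_G(K)$ below $1/4$.
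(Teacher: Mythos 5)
Your opening reduction is correct and coincides with the paper's: since $t!/t^t\le 2/9<1/4$ for $t\ge 3$, Proposition~\ref{Prop:PFormula} forces $P_G(K)\in\{1,\,1/2,\,1/4\}$ for every $K\le G$, with $t$-vector $(2,1,\dots,1)$ or $(2,2,1,\dots,1)$ in the non-normal case, and hence (using $m\mid n$) every non-normal subgroup has index in $\{3,4,5,6,8\}$. Your reduction of the nilpotent case to a non-Dedekind $2$-group via $P_G(K\times 1)=P_P(K)^{|A|}$ is also sound. But the proposal stops exactly where the real work begins, and you say so yourself: your obstacle (a) is not a technicality but the heart of the argument, and the idea that resolves it is absent. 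The paper's key step is this: fix a non-normal $H$ with $P_G(H)=1/4$, so that $(G:H)\in\{5,6,8\}$ and $2\mid |H|$. Every proper subgroup $M<H$ has $(G:M)\ge 2\,(G:H)\ge 10$... more precisely $(G:M)\notin\{3,4,5,6,8\}$, hence $M\unlhd G$. If $H$ had two distinct maximal subgroups, both would be normal in $G$ and their product would be $H$, making $H$ normal --- a contradiction. So $H$ has a unique maximal subgroup, i.e.\ $H\cong C_{2^k}$, and its maximal subgroup $M$ is the canonical normal cyclic $2$-subgroup of case~(ii), with $P_{G/M}(H/M)=P_G(H)=1/4$ by Corollary~\ref{cor:normal} and $|G/M|\in\{12,16\}$ in the index-$6$ and index-$8$ cases, where GAP finishes. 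Without this, your ``pass modulo a suitable normal cyclic $2$-subgroup'' has no candidate for $M$ and no bound on $|G/M|$. The same mechanism is what pins down your non-nilpotent branches: the index constraint gives $|G|=5\cdot 2^k$ or $3\cdot 2^{k+1}$ outright, the odd Sylow subgroup is normal because $P_G(C)\in\{1/2,1/4\}$ would force $2\mid|C|$, and the normal maximal subgroup $M$ of $H$ centralises it; asserting that ``a $D_5$-section forces'' the family $C_5\rtimes C_{2^k}$ skips precisely the steps that bound $|G|$ and identify the action.

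On organisation: your detour through Proposition~\ref{Prop:solubilitycriterion} and Theorems~\ref{Thm:supersol} and~\ref{Thm:nilp} is a genuinely different route from the paper, which never invokes them and works purely from the index constraints; it is legitimate but buys little here, since the index analysis alone already confines $|G|$ to $5\cdot 2^k$, $3\cdot 2^{k+1}$ or $2^{k+3}$. Your obstacle (b) is also a real gap --- Proposition~\ref{Prop:structproperties}(ii) gives only $\mathrm{tp}(G)\le\mathrm{tp}(G/M)$, not the needed lower bound --- though in fairness the paper's own proof is an explicit sketch of the forward direction and does not address the converse either; indeed, read literally, case~(ii) requires care, as $S_3\times C_4$ has a central $C_2$ with quotient $D_6$ yet contains a subgroup of order $2$ with $P_G(H)=(1/2)^4$.
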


\begin{proofof}
The proof is similar to that of Theorem~\ref{thm:classify-p} so we only sketch some steps.
Assume first that $P_G(H) = 1/4$ for some subgroup $H$ of $G$, where $1/4 = \prod_{\sigma = 1}^s\frac{(t_\sigma)!}{t_\sigma^{t_\sigma}}$. 
Then note that we necessarily have two values $t_{\sigma_1} = t_{\sigma_2} = 2$, 
while the rest are all $1$. 
The reason is that if there exist $t_{\sigma} \geq 3$, 
then $\frac{(t_\sigma)!}{t_\sigma^{t_{\sigma}}} \leq 2/9$ and thus 
$\prod_{\sigma = 1}^s\frac{(t_\sigma)!}{t_\sigma^{t_\sigma}} \leq 2/9 $. 
Hence, $2 \big\vert |H|$ and $4 + m = n$, where $m, n$ are, as usual, 
the index of $H$ in $N_G(H)$ and in $G$, respectively. 
Furthermore, we have $m \mid n$, and thus $m \mid 4$. 
Hence $m = 1, 2, $ or $4$. 
In conclusion, if $P_G(H) = 1/4$, 
then $H$ is group of even order whose indices $m$, $n$ equal one of the following:
\[
m = 1 \text{ and } n = 5, \, \, m = 2 \text{ and } n = 6, \, \text{ or }\, m = 4 \text{ and } n = 8.
\]

\noindent Assume now that $\mathrm{tp}(G) = 1/4$. Then for every non-normal subgroup $T$ of $G$ 
we should have $P_G(T) = 1/4 $ or $P_G(T) = 1/2$. 
Hence, the only possible values for the index $(G : T)$ are $3, 4, 5, 6,$ and $8$. 
(The first two values occur in the case that $P_G(T) = 1/2$, according to Lemma~\ref{Lem:p/p}.)
As $\mathrm{tp}(G) = 1/4$, there exists a non-normal subgroup $H \leq G$ with $P_G(H) = 1/4$, 
and thus $(G : H) \in \{ 5, 6, 8\}$, 
while $H$ has even order. 
Observe that if $M$ is any proper subgroup of $H$, 
then $(G : M) \notin \{3, 4, 5, 6, 8\}$. 
Therefore any proper subgroup of $H$ is normal in $G$. 
We conclude that $H = C_{2^k}$ for some integer $k \geq 1$. 
We now distinguish three cases.

\noindent {\bf Case 1:} \emph{$(G : H) = 5$ and $H = N_G(H)$.}

\noindent If $C$ is a $5$-Sylow subgroup of $G$, 
then $P_G(C) = 1$, as $|C| = 5$ is odd. 
Hence $C$ is a normal subgroup of $G$. 
If $M$ is the maximal subgroup of $H$, 
then $C \cdot M = C \times M$. 
So $M $ is a central subgroup of $G$. 
Let $C = \langle a \,\vert\, a^5 = 1\rangle$ and 
$H = \langle b \,\vert\, b^{2^k} = 1\rangle$. 
Then $b$ acts as an automorphism of order $2$ on $C$. 
We conclude that 
 \[
G = \left\langle a, b\, \big\vert\, a^5 = b^{2^k} = 1,\, a^b = a^{-1}\right\rangle 
\]
According to Lemma \ref{Lem:A4-A5}(i), the above group has transversal probability equal to 1/4 and Case 1 is completed.

\noindent {\bf Case 2:} \emph{$(G : H) = 6$ and $(N_G(H) : H) = 2$.}

\noindent In this case the $3$-Sylow subgroup $C$ of $G$ is a normal subgroup of $G$, or else 
we would have $P_G(C) $ being a power of $2/9$.
If $|H| = 2$ then $G = D_6$. 
So we may assume that $H = C_{2^k}$ with $k > 1$.
Let $M = \langle b^2\rangle$ be the maximal subgroup of $H$. 
Then $1 \neq M \unlhd G$.
In addition $1/4 = P_G(H) = P_{G/M}(H/M)$, 
while $\mathrm{tp}(G) \leq \mathrm{tp}(G/M)$. 
We conclude that $G/M$ is a group of order $12$ 
whose common transversal probability is $1/4$. 
Using GAP, we find that $G/M = D_6$. 

\noindent {\bf Case 3:} \emph{$(G : H) = 8$ and $(N_G(H) : H) = 4$.}

\noindent In this case $G$ is a $2$-group. 
If $|H| = 2$ then $G$ is a group of order $16$. 
Using GAP again, we see that the only groups of order $16$ 
that have transversal probability $1/4$ 
are $M_4(2), \, C_4 \circ D_4, \, C_2 \times D_4$, and $ C^2_2 \rtimes C_4$. 
We may assume $|H| > 2$, and we write $M$ for its unique maximal subgroup. 
Then $M$ is normal in $G$. 
As earlier, we get $\mathrm{tp}(G/M) = 1/4$, while $|G/M| = 16$. 
Hence $G/M$ is one of the previously mentioned groups. 

\noindent This completes the proof in Case 3, and the theorem follows. 
\end{proofof}

\noindent Proposition~\ref{Prop:solubilitycriterion} might have left the reader (as it did us) with the nagging feeling that a refinement might perhaps be possible, which connects $\mathrm{tp}(-)$ with the derived length of a soluble group. The final result of this section is a first indication that such a result might indeed exist, although we have been unable so far to find it. The reader should also note that the bound obtained in the next result is the same one achieved in Theorem~\ref{Thm:nilp}.

\begin{proposition}\label{Prop:derived}
If $G$ is a non-abelian group with $\mathrm{tp}(G) > 4/81,$ 
then $G$ has derived length $2$. Furthermore, the bound is sharp.
\end{proposition}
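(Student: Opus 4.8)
The plan is to argue by induction on $|G|$, after reducing to the soluble case, and to funnel everything into a single extremal configuration that is then excluded by a short analysis of prime-order subgroups. Since $\mathrm{tp}(G) > 4/81 > (1/2)^{40}$, Proposition~\ref{Prop:solubilitycriterion} gives that $G$ is soluble or has a section isomorphic to $A_5$; the latter is impossible, since Proposition~\ref{Prop:structproperties}(iii) together with $\mathrm{tp}(A_5) = (1/2)^{14} < 4/81$ from Lemma~\ref{Lem:A4-A5}(v) would force $\mathrm{tp}(G) \leq (1/2)^{14}$. So $G$ is soluble, and I induct on $|G|$. Let $N$ be a minimal normal subgroup; it is an elementary abelian $p$-group, and $\mathrm{tp}(G/N) \geq \mathrm{tp}(G) > 4/81$ by Proposition~\ref{Prop:structproperties}(ii). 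If $G/N$ is abelian, then $G' \leq N$ is abelian, so $G'' = 1$ and we are done. If $G/N$ is non-abelian, the induction hypothesis gives $(G/N)'' = 1$; since $(G/N)'' = G''N/N$, this means $G'' \leq N$. Running this over all minimal normal subgroups, I either find one with abelian quotient (done), or conclude $G'' \leq N$ for every minimal normal $N$; two distinct such would give $G'' \leq N_1 \cap N_2 = 1$ (done). Hence the only surviving configuration is: $G$ soluble with a \emph{unique} minimal normal subgroup $N = G''$, elementary abelian, and $G$ of derived length exactly $3$.

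To rule this out I would invoke Corollary~\ref{Cor:Hprime}. Let $H \leq G$ be a non-normal subgroup of prime order $q$, with $n = (G:H)$ and $m = (N_G(H):H)$, so that $\mathrm{tp}(G) \leq P_G(H) = (q!/q^q)^{(n-m)/q}$ with $(n-m)/q$ a positive integer. If $q \geq 5$, then $P_G(H) \leq 5!/5^5 = 24/625 < 4/81$, contradicting $\mathrm{tp}(G) > 4/81$; so every prime-order subgroup of order $\geq 5$ is normal. If $q = 3$, then $(2/9)^{(n-m)/3} > (2/9)^2$ forces $(n-m)/3 = 1$, whence $n - m = 3$, and $m \mid n$ gives $m \in \{1,3\}$ and $|G| = 3n \in \{12,18\}$; but all groups of order $12$ or $18$ are metabelian, contradicting derived length $3$. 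If $q = 2$, then $(1/2)^{(n-m)/2} > 4/81$ forces $n - m \leq 8$, while $m \mid n$ and $m < n$ give $n - m \geq n/2$, so $|G| = 2n \leq 32$; the only non-metabelian groups of order at most $32$ are $S_4$ and $\mathrm{SL}_2(3)$, and both satisfy $\mathrm{tp} \leq 4/81$ (for $S_4$ by the $q=3$ computation above, for $\mathrm{SL}_2(3)$ by Lemma~\ref{Lem:A4-A5}(ix)), again a contradiction.

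The remaining case, in which $G$ has \emph{no} non-normal subgroup of prime order, is the step I expect to be the crux. Here every subgroup of prime order $q \mid |G|$ is normal, hence a minimal normal subgroup, hence equal to the unique $N$; as this forces all prime divisors of $|G|$ to coincide, $G$ is a $p$-group and $N = C_p$. In a $p$-group a normal subgroup of order $p$ is central, so every subgroup of order $p$ is central, and uniqueness of $N$ then says $G$ has a unique subgroup of order $p$. Therefore $G$ is cyclic or generalized quaternion, both of which are metabelian, contradicting derived length $3$. This exhausts all cases, so the surviving configuration cannot occur, the induction closes, and $G'' = 1$.

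Finally, sharpness is witnessed by $G = \mathrm{SL}_2(3)$: by Lemma~\ref{Lem:A4-A5}(ix) we have $\mathrm{tp}(\mathrm{SL}_2(3)) = (2/9)^2 = 4/81$, while $G' = Q_8$ and $G'' = Z(Q_8) = C_2 \neq 1$, so $G$ is non-abelian of derived length $3$. Thus the threshold $4/81$ cannot be lowered, and the bound is best possible.
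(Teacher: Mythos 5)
Your proof is correct and follows essentially the same route as the paper's: induction on $|G|$, reduction via solubility to the case of a unique minimal normal subgroup, an application of Corollary~\ref{Cor:Hprime} to non-normal prime-order subgroups to force $|G|$ small, and the unique-subgroup-of-order-$p$ classification (cyclic or generalised quaternion) in the remaining case, with the same sharpness witness $\mathrm{SL}_2(3)$. The one genuine difference is that where the paper falls back on GAP checks (for orders $\leq 32$, and again for orders $12$ and $18$), you close those cases by hand, noting that the only groups of order at most $32$ of derived length $3$ are $S_4$ and $\mathrm{SL}_2(3)$ and that both violate the hypothesis $\mathrm{tp}(G) > 4/81$.
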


\begin{proofof}
We induce on the order of $G$.
Note that $G$ cannot have odd order according to Corollary~\ref{Cor:NonAbOdd}. 
Furthermore, if $H$ is a subgroup of $G$ of odd order, 
then $H$ must be abelian; 
otherwise, combining Corollary~\ref{Cor:NonAbOdd} with Part~(i) of Proposition~\ref{Prop:structproperties}, 
we would get 
\[
4/81< \mathrm{tp}(G) \leq \mathrm{tp}(H) \leq 4/81, 
\]
a contradiction. Also, since 
\[
\mathrm{tp}(G) \,>\, 4/81 \,>\, (1/2)^{14} = \mathrm{tp}(A_5) 
\]
$G$ is necessarily soluble by Proposition~\ref{Prop:solubilitycriterion} and Proposition~\ref{Prop:structproperties}(iii).

\noindent
Next, we argue that $G$ has a unique minimal
normal subgroup. Assume otherwise and let $N_1$, $N_2$
be distinct minimal normal subgroups of $G$. 
Since $G/N_i$ is either abelian, or of derived length $2$ by the induction hypothesis plus Proposition~\ref{Prop:structproperties}(ii), 
we see that $\left(G/N_1\right)'$ is abelian, 
as is $\left(G/N_2\right)'$.
We have 
\[
(G/N_i)' = G' N_i/N_i\, \cong \,G'/ G' \cap N_i,\quad (i = 1, 2)
\]
and, since $G' \cap N_i \unlhd G$, either $G' \cap N_i = 1$, or $N_i \leq G'$, due to minimality of $N_i$. If $G' \cap N_1 = 1$, then $G'$ is abelian, and the induction is complete. Arguing similarly for $N_2$, we deduce that $N_1, N_2 \leq G'$. Therefore both $G'/N_1$ and $G'/N_2$ are abelian groups.
Now notice that $N_1 \cap N_2 = 1$, again due to minimality of the $N_i$. 
Moreover, the group $G'\big/(N_1 \cap N_2)$,
which is isomorphic to a subgroup of $G'/N_1 \times G'/N_2$,
is abelian as well. 
It follows that $G'$ is abelian, thus $G$ has 
derived length $2$, as required.

\noindent We may therefore assume that $G$ has a unique minimal normal subgroup. 
It follows that for every subgroup $H$ of order $2$ in $G$, 
either $H$ is normal and thus central in $G$
(and moreover there can exist only one such subgroup by the previous observation) 
or, by Part~(i) of Proposition~\ref{Prop:structproperties} plus Corollary~\ref{Cor:Hprime}, 
we have 
\[
2^{\frac{n-m}{2}}< 81/4,
\]
where $n = |G|/2$ and $m = |N_G(H)|/2$. 
This implies $n - m \leq 8$ and thus $|G| - |N_G(H)| \leq 16$.
Since $|N_G(H)| \leq |G|/2$, 
the previous inequality implies that $|G| \leq 32$.
A GAP check confirms that every group of order $\leq 32$
with $\mathrm{tp}(G) > 4/81$
has derived length $\leq 2$ 
and thus the proof is complete in that case.

\noindent It therefore suffices to treat the case where
a Sylow $2$-subgroup of $G$ has only one 
minimal subgroup which is moreover the unique minimal
normal subgroup of $G$.
If $G$ is a $2$-group, then it is either cyclic
or a generalised quaternion group (these being the only $2$-groups with a unique minimal subgroup); the first case is ruled out by our hypothesis that $G$ is non-abelian. In the second case, the derived length equals $2$. We may thus assume that $G$ is not a $2$-group. 

\noindent Let $P$ be a non-trivial Sylow $p$-subgroup of $G$ for some odd prime $p$ and let $T \leq P $ be of order $p$. As the only minimal normal 
subgroup of $G$ is a $2$-group, the cyclic group 
$T$ is not normal in $G$, thus $N_G(T) < G$ and, by Proposition~\ref{Prop:structproperties}(i) and Corollary~\ref{Cor:Hprime}, 
\[
4/81 < 
\mathrm{tp}(G) \leq 
\mathrm{tp}(T) \leq 
P_G(T) = 
\left(\frac{p!}{p^p}\right)^{\frac{n-m}{p}},
\]
where $n = |G|/p$ and $m = |N_G(T)|/p < n$. 
Since $\frac{n-m}{p} \geq 1$ and $p!/p^p \leq 24/625 < 4/81$ for $p\geq 5$, 
we must have $p = 3$ and $n-m = 3$. 
Moreover, since $m\mid n$, we conclude that $m\mid 3$. 
Thus, either $m = 1$ and $\vert G\vert = 12$, or $m = 3$ and $\vert G\vert = 18$. 
As we have already checked, no group of order $12$ or $18$ has derived length $> 2$ 
and transversal probability $> 4/81$, and our proof is complete. 

\noindent Finally, we note that the transversal probability 
of $\mathrm{SL}_2(3)$ equals $4/81$ by Part~(ix) of Lemma~\ref{Lem:A4-A5}, 
while its derived length is $3$; 
so that our bound is indeed sharp as claimed.
\end{proofof}

\subsection{Group extensions and the function $\mathrm{tp}(-)$}
\label{Sec:tpExt}

\noindent
The aim in this section is to see how the function $\mathrm{tp}(-)$ behaves with respect to group extensions.

\noindent Let $G$ and $K$ be groups, and let $(V, W)$ be a factor system for $G$ by $K$. In detail, this means that $V: K\rightarrow \mathrm{Aut}(G)$ and $W: K\times K\rightarrow G$ are maps, such that 
\begin{align}
V(k_2) \circ V(k_1) \,& = \, i_{W(k_1, k_2)}\circ V(k_1 k_2),\quad (k_1, k_2 \in K) \label{Eq:FactorProp1}\\[1mm]
W(k_1, k_2k_3) W(k_2, k_3) \,& = \, W(k_1k_2, k_3) V(k_3)(W(k_1, k_2)),\quad (k_1, k_2, k_3 \in K), \label{Eq:FactorProp2}
\end{align}
where $i_g(x) = g^{-1} x g$ for $x, g\in G$, so that $i_g$ is the inner automorphism of $G$ associated with the element $g\in G$. Let $\widehat{G}$ be the extension of $G$ by $K$ associated with $(V, W)$; that is, $\widehat{G} = K\times G$ as a set, with group law given by
\begin{equation}
\label{Eq:GLExt}
(k_1, g_1)\cdot (k_2, g_2) = (k_1 k_2,\,W(k_1, k_2) V(k_2)(g_1)\, g_2),\quad (k_1, k_2\in K;\, g_1, g_2\in G),
\end{equation}
and we have a short exact sequence 
\[
\begin{CD}
1 @>>> G @>{\iota}>> \widehat{G} @>{\pi}>> K @>>> 1,
\end{CD}
\]
where $\iota(g) = (1,\, W(1, 1)^{-1} g)$ and $\pi(k, g) = k)$, and the associated map $\varphi_E: K\rightarrow \widehat{G}$ is given by $\varphi_E(k) = (k, 1)$; cf. \cite[Sec.~9.4]{Scott}, in particular Statement~9.4.5. We shall need the following properties of a factor system.
\begin{lemma}
\label{Lem:FactorSet}
Let $(V, W)$ be a factor system for $G$ by $K$. Then
\vspace{-2mm}
\begin{enumerate}
\item[(i)] $V(1) = i_{W(1, 1)},$
\vspace{1.5mm}
\item[(ii)] $W(k, 1) = W(1, 1),\quad (k\in K),$
\vspace{1.5mm}
\item[(iii)] $W(1, k) = V(k)(W(1, 1)),\quad (k\in K)$.
\end{enumerate}
\end{lemma}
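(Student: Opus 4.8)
The plan is to obtain all three identities by feeding well-chosen special values of the arguments into the two defining relations \eqref{Eq:FactorProp1} and \eqref{Eq:FactorProp2} of a factor system, and then cancelling in the group $G$. Since $V(1)$ is an automorphism and the values of $W$ lie in a group, every step reduces to an elementary left- or right-cancellation, so no genuine difficulty is anticipated; the one thing to keep track of is the convention $i_g(x) = g^{-1} x g$, which fixes the side on which conjugation acts.

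For part (i), I would set $k_1 = k_2 = 1$ in \eqref{Eq:FactorProp1}, obtaining $V(1)\circ V(1) = i_{W(1,1)}\circ V(1)$; post-composing with the inverse automorphism $V(1)^{-1}$ then yields $V(1) = i_{W(1,1)}$ immediately. For part (iii), which is logically independent of the others, I would specialise \eqref{Eq:FactorProp2} at $k_1 = k_2 = 1$ and $k_3 = k$. The left-hand side collapses to $W(1,k)\,W(1,k)$ and the right-hand side to $W(1,k)\,V(k)(W(1,1))$, so cancelling the common left factor $W(1,k)$ gives $W(1,k) = V(k)(W(1,1))$, as desired.

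For part (ii) I would invoke part (i). Putting $k_1 = k$ and $k_2 = k_3 = 1$ in \eqref{Eq:FactorProp2} produces $W(k,1)\,W(1,1) = W(k,1)\,V(1)(W(k,1))$, whence $W(1,1) = V(1)(W(k,1))$ after left-cancellation. Substituting $V(1) = i_{W(1,1)}$ from part~(i) converts this into $W(1,1) = W(1,1)^{-1}\,W(k,1)\,W(1,1)$, and a final left-multiplication by $W(1,1)$ followed by right-cancellation of $W(1,1)$ delivers $W(k,1) = W(1,1)$.

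The main—and really the only—obstacle here is clerical rather than conceptual: because $G$ need not be abelian, one must preserve the order of the group factors at each cancellation and apply $i_{W(1,1)}$ on the correct side. Apart from that bookkeeping, the three assertions follow directly from the two cocycle-type identities by the substitutions above.
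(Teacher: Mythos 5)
Your proposal is correct and follows exactly the same route as the paper: part (i) via $k_1=k_2=1$ in the first cocycle relation and cancellation of $V(1)$, part (ii) via $k_1=k$, $k_2=k_3=1$ together with part (i) and the convention $i_g(x)=g^{-1}xg$, and part (iii) via $k_1=k_2=1$, $k_3=k$ with a left cancellation. No differences worth noting.
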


\begin{proofof}
(i) Setting $k_1 = k_2 = 1$ in \eqref{Eq:FactorProp1} gives 
\[
V(1) \circ V(1) = i_{W(1, 1)} \circ V(1).
\]
Thus, 
\[
V(1) = V(1) \circ V(1) \circ V(1)^{-1} = i_{W(1, 1)} \circ V(1) \circ V(1)^{-1} = i_{W(1, 1)},
\]
as desired.

\noindent (ii) Setting $k_1 = k$ and $k_2 = k_3 = 1$ in \eqref{Eq:FactorProp2}, we get
\[
W(k, 1) W(1, 1) = W(k, 1) V(1)(W(k, 1)).
\]
Applying Part~(i), this equation may be rewritten as 
\[
W(k, 1) W(1, 1) = W(k, 1) W(1, 1)^{-1} W(k, 1) W(1, 1).
\]
Multiplying the last equation from the left by $W(k, 1)^{-1}$ and from the right by $W(1, 1)^{-1}$, the result follows.

\noindent (iii) Setting $k_1 = k_2 = 1$ and $k_3 = k$ in \eqref{Eq:FactorProp2} yields
\[
W(1, k)^2 = W(1, k) V(k)(W(1, 1)),
\]
whence (iii). 
\end{proofof}

\begin{theorem}\label{Thm:tpExtEst}
Let $\widehat{G}$ be an extension of the group $G$ by the group $K$ with associated factor system $(V, W)$. Then
\begin{equation}
\label{Eq:tpExtEst}
\mathrm{tp}(\widehat{G})\, \leq\, \min_{H\leq G} \prod_{k\in K} P_G(H, V(k)(H)).
\end{equation}
\end{theorem}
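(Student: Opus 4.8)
The plan is to produce, for each subgroup $H \leq G$, a single subgroup of $\widehat{G}$ whose common transversal probability is \emph{exactly} $\prod_{k \in K} P_G(H, V(k)(H))$; since $\mathrm{tp}(\widehat{G})$ is the minimum of $P_{\widehat{G}}(-)$ over \emph{all} subgroups of $\widehat{G}$, minimising this equality over $H$ then yields \eqref{Eq:tpExtEst} at once. The natural candidate is $\widehat{H} \coloneqq \iota(H) \leq \widehat{G}$, a subgroup isomorphic to $H$ which lies inside the normal subgroup $\iota(G) = \ker(\pi)$. The inequality (rather than equality) in the statement simply reflects that $\mathrm{tp}(\widehat{G})$ ranges over all subgroups of $\widehat{G}$, not merely those of the form $\iota(H)$.

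The first step is to analyse the double cosets $\widehat{H} \backslash \widehat{G} / \widehat{H}$. Because $\widehat{H} \leq \ker(\pi)$, applying $\pi$ gives $\pi(\widehat{H} x \widehat{H}) = \pi(x)$, so every double coset lies over a single element $k \in K$, and the double cosets are thereby partitioned by $k$; since $\pi$ is surjective, every $k$ occurs. Writing a representative as $x = (k, g)$ and using the group law \eqref{Eq:GLExt} together with all three parts of Lemma~\ref{Lem:FactorSet}, I would compute directly that
\[
(1,\, W(1,1)^{-1}h_1)\,(k,g)\,(1,\, W(1,1)^{-1}h_2) = \bigl(k,\, V(k)(h_1)\,g\,h_2\bigr), \quad (h_1, h_2 \in H),
\]
the point being that Lemma~\ref{Lem:FactorSet}(i)--(iii) are precisely what make the stray $W(1,1)$ factors cancel. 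Consequently $\widehat{H}(k,g)\widehat{H} = \{(k, g') : g' \in V(k)(H)\, g\, H\}$, and the projection $(k, g') \mapsto g'$ is a bijection of this set onto the ordinary double coset $V(k)(H)\, g\, H \subseteq G$. This identity is the heart of the argument, and the factor-system cancellation is where I expect the main (though purely computational) difficulty to lie; the rest is bookkeeping.

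From this bijection I would read off two facts. First, the components of $\Gamma^{\widehat{G}}_{\widehat{H}}$ lying over a fixed $k$ correspond exactly to the $(V(k)(H), H)$-double cosets of $G$, that is, to the components of $\Gamma^G_{H, V(k)(H)}$ (note $|V(k)(H)| = |H|$, so $V(k)(H)$ and $H$ share the index of $H$, and $P_G(H, V(k)(H))$ is defined). Second, comparing sizes via $|\widehat{H}(k,g)\widehat{H}| = |V(k)(H) g H|$ together with the standard identities $|\widehat{H}(k,g)\widehat{H}| = |H|^2 / |\widehat{H} \cap \widehat{H}^{x}|$ and $|V(k)(H) g H| = |H|^2 / |H \cap V(k)(H)^g|$, I would conclude $|\widehat{H} \cap \widehat{H}^x| = |H \cap V(k)(H)^g|$. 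Hence the component parameter $t_\sigma = (\widehat{H} : \widehat{H} \cap \widehat{H}^x)$ in $\widehat{G}$ coincides with the parameter $t_\sigma = (H : H \cap V(k)(H)^g)$ of the matching component in $G$, in the notation of Lemma~\ref{Lem:HKWeight}(b).

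Finally I would assemble the product. Applying Proposition~\ref{Prop:PFormula} to $\widehat{H}$ in $\widehat{G}$, grouping its components according to the value $k \in K$ of the double coset they represent, and applying Proposition~\ref{Prop:PFormula} once more to each $\Gamma^G_{H, V(k)(H)}$, the matching of both the index sets and the parameters $t_\sigma$ established above yields
\[
P_{\widehat{G}}(\widehat{H}) = \prod_{k \in K} P_G\bigl(H,\, V(k)(H)\bigr).
\]
Since $\mathrm{tp}(\widehat{G}) \leq P_{\widehat{G}}(\widehat{H})$ for every $H \leq G$, taking the minimum over all $H \leq G$ gives \eqref{Eq:tpExtEst}, completing the argument.
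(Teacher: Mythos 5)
Your argument is correct, and it reaches the stronger conclusion (an equality $P_{\widehat{G}}(\iota(H)) = \prod_{k\in K}P_G(H, V(k)(H))$ for each $H\leq G$) by a genuinely different mechanism than the paper. Both proofs rest on the same two coset computations — the paper derives $(k,g)\iota(h) = (k, gh)$ and $\iota(h)(k,g) = (k, V(k)(h)g)$ from Lemma~\ref{Lem:FactorSet}, and your displayed identity $\iota(h_1)(k,g)\iota(h_2) = (k, V(k)(h_1)\,g\,h_2)$ is exactly their composition — but from there the routes diverge. The paper counts transversals directly: it builds an explicit bijection between $\mathrm{DT}_{\widehat{G}}(\iota(H))$ and the set of choice functions $k \mapsto T(k) \in \mathrm{DT}_G(H, V(k)(H))$, then divides by $|H|^{(\widehat{G}:\iota(H))}$. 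You instead work structurally through the coset intersection graph: the fibering of the $(\iota(H),\iota(H))$-double cosets of $\widehat{G}$ over $K$, the identification of the double cosets over a fixed $k$ with the $(V(k)(H), H)$-double cosets of $G$, the matching of the parameters $t_\sigma$ via the double-coset size formula, and finally Proposition~\ref{Prop:PFormula} applied on both sides. This is shorter because it reuses the already-proven component formula rather than re-counting transversals, and it makes transparent that $\Gamma^{\widehat{G}}_{\iota(H)}$ is just the disjoint union over $k\in K$ of copies of the graphs $\Gamma^G_{H, V(k)(H)}$; the paper's bijection, on the other hand, is self-contained at the level of transversal sets and does not need Lemma~\ref{Lem:sDet} or Lemma~\ref{Lem:HKWeight}(b). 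Both correctly conclude by taking the minimum over $H\leq G$ and noting that $\mathrm{tp}(\widehat{G})$ is a minimum over all subgroups of $\widehat{G}$, which is where the inequality (rather than equality) in \eqref{Eq:tpExtEst} comes from.
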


\begin{proofof}
Let $H\leq G$ be a subgroup (with $G$ considered as a subgroup of $\widehat{G}$ via the embedding \nolinebreak $\iota$). Then, for $h\in H$ and $(k, g) \in \widehat{G}$, we have 
\begin{align*}
(k, g) \iota(h) & = (k, g) (1, W(1, 1)^{-1} h)\\[1mm]
& = (k,\, W(k, 1) V(1)(g) W(1, 1)^{-1} h)\\[1mm]
& = (k, \,W(k, 1) W(1, 1)^{-1} g W(1, 1) W(1, 1)^{-1} h)\\[1mm]
& = (k,\, W(k, 1) W(1, 1)^{-1} g h) = (k, g h), 
\end{align*}
where we have applied the definition of $\iota$ in the first step, \eqref{Eq:GLExt} in the second step, Lemma~\ref{Lem:FactorSet}(i) in Step~3, and Lemma~\ref{Lem:FactorSet}(ii) in the last step. Hence, 
\begin{equation}\label{Eq:LeftHCosetForm} 
(k, g) \cdot H = (k,\, g H),\quad ((k, g) \in \widehat{G},\, H\leq G).
\end{equation}
Similarly, making use of the definition of $\iota$, the group law \eqref{Eq:GLExt}, and Part~(iii) of Lemma~\ref{Lem:FactorSet}, we find that, for $h\in H$ and $(k, g) \in \widehat{G}$,
\begin{align*}
\iota(h) (k, g) & = (1, W(1, 1)^{-1} h)(k, g)\\[1mm]
& = (k,\, W(1, k) V(k)(W(1, 1)^{-1} h) g)\\[1mm]
& = (k,\, W(1, k) V(k)(W(1, 1))^{-1} V(k)(h) g)\\[1mm]
& = (k,\, V(k)(h) g),
\end{align*}
implying 
\begin{equation}\label{Eq:RightHCosetForm}
H \cdot (k, g) = (k, V(k)(H) g),\quad ((k, g) \in \widehat{G},\, H\leq G).
\end{equation}
Now let
\[
\widetilde{T} = \{(k_i, g_j)\}_{(i, j)}\,\in\, \mathrm{DT}_{\widehat{G}}(H)
\]
be any two-sided transversal for $H$ in $\widehat{G}$. 
From either \eqref{Eq:LeftHCosetForm} or \eqref{Eq:RightHCosetForm}, when applied to $H\leq G$ and $(k_i, g_j)\in \widetilde{T}$, it is clear that \emph{all} elements of $K$ must occur as first component of an element in $\widetilde{T}$, since otherwise
\[
(k, 1) \,\not\in\, \bigcup_{(i, j)} (k_i, g_j) H = \widehat{G}
\]
for some $k\in K$, a contradiction. Given $k\in K$, consider the set 
\[
T(k) \coloneqq \left\{g\in G:\, (k, g) \in \widetilde{T}\right\}\, \subseteq\, G.
\]
We claim that $T(k) \in \mathrm{DT}_G(H, V(k)(H))$. 

\noindent First, let $k\in K$ be given, and let $g\in G$ be arbitrary. Then
\[
(k, g)\,\in\, (k_i, g_j) H = (k_i, g_j H) 
\]
for some $(i, j)$. This implies $k_i = k$ and $g \in g_j H$. Thus, $g_j \in T(k)$ and $g \in \bigcup_{g_\ell \in T(k)} g_\ell H$, so that 
\[
G = \bigcup_{g_\ell \in T(k)} g_\ell H.
\]
Similarly, given $g\in G$, we have 
\[
(k, g) \,\in\, H (k_i, g_j) = (k_i, V(k_i)(H) g_j)
\]
for some $(i, j)$, thus $k_i = k$, $g\in V(k)(H) g_j$, and $g_j \in T(k)$, so that 
\[
G = \bigcup_{g_\ell \in T(k)} V(k)(H) g_\ell.
\]
Next, suppose that $g_j H \cap g_\ell H \neq \emptyset$, where $g_j, g_\ell \in T(k)$. Then 
\begin{equation*}
(k, g_j) H\,\cap\, (k, g_\ell) H = (k, g_j H) \,\cap\, (k, g_\ell H) 
 = (k, g_j H \cap g_\ell H) \,\neq\, \emptyset,
\end{equation*}
implying $(k, g_j) = (k, g_\ell)$ by our hypothesis on $\widetilde{D}$, thus $g_j = g_\ell$. Hence, $T(k)$ is a left transversal for $H$ in $G$. Similarly, suppose that 
\[
V(k)(H) g_j \,\cap\, V(k)(H) g_\ell \,\neq\, \emptyset
\]
 for some $g_j, g_\ell \in T(k)$. Then we have 
\begin{equation*}
H (k, g_j) \,\cap\, H (k, g_\ell) = (k, V(k)(H) g_j)\,\cap\, (k, V(k)(H) g_\ell) \,
 = \, (k,\, V(k)(H) g_j \cap V(k)(H) g_\ell)\,\neq\,\emptyset,
\end{equation*}
so that, again, $g_j = g_\ell$. Consequently, $T(k)$ is also a right transversal for $V(k)(H)$ in $G$, therefore $T(k) \in \mathrm{DT}_G(H, V(k)(H))$, as claimed. 

\noindent Mapping $k\in K$ to $T(k)$ for given $\widetilde{T}$ thus gives a choice function
\[
f(\widetilde{T}): K\,\longrightarrow\,\bigsqcup_{k\in K} \mathrm{DT}_G(H,\, V(k)(H)),
\]
and, subsequently, sending $\widetilde{T} \in \mathrm{DT}_{\widehat{G}}(H)$ to $f(\widetilde{T})$, defines a map
\[
\widetilde{\Phi}: \mathrm{DT}_{\widehat{G}}(H)\,\longrightarrow\, \mathrm{CF}(K, H, G), 
\]
where, unsurprisingly, $\mathrm{CF}(K, H, G)$ denotes the set of all choice functions
\[
f: K\,\longrightarrow\, \bigsqcup_{k\in K} \mathrm{DT}_G(H,\, V(k)(H));
\]
that is, functions $f$ as above, such that $f(k) \in \mathrm{DT}_G(H,\,V(k)(H))$ for each $k\in K$. For later use we observe that, obviously, 
\[
\vert\mathrm{CF}(K, H, G)\vert = \prod_{k\in K} \vert \mathrm{DT}_G(H,\, V(k)(H))\vert.
\]
Next, we note that, if $\widetilde{T}_1, \widetilde{T}_2 \in \mathrm{DT}_{\widehat{G}}(H)$ are such that $\widetilde{\Phi}(\widetilde{T}_1) = f = \widetilde{\Phi}(\widetilde{T}_2)$ then, by definition of $\widetilde{\Phi}$,
\[
\widetilde{T}_1 = \bigcup_{k\in K} (k, f(k)) = \widetilde{T}_2,
\]
so that $\widetilde{\Phi}$ is injective. We want to show that $\widetilde{\Phi}$ is surjective as well. Let $f\in \mathrm{CF}(K, H, G)$ be given, and set 
\[
\widetilde{T}_f\coloneqq\,\bigcup_{k\in K} (k, f(k))\,\subseteq\, \widehat{G}.
\]
We claim that $\widetilde{T}_f \in \mathrm{DT}_{\widehat{G}}(H)$. By \eqref{Eq:LeftHCosetForm}, we have 
\begin{equation*}
\bigcup_{\widetilde{t} \in \widetilde{T}_f} \widetilde{t} \,H 
 = \bigcup_{k\in K}\, \bigcup_{g\in f(k)} (k, g) H 
 = \bigcup_{k\in K}\, \bigcup_{g\in f(k)}\, (k, gH) 
 = \bigcup_{k\in K} \left(k, \,\bigcup_{g\in f(k)} g H\right)
 = \bigcup_{k\in K} (k, G)
 = \widehat{G}. 
\end{equation*}
Similarly, by \eqref{Eq:RightHCosetForm}, we have 
\begin{multline*}
\bigcup_{\widetilde{t} \in \widetilde{T}_f} H \,\widetilde{t} = \bigcup_{k\in K}\, \bigcup_{g\in f(k)} H (k, g) 
 = \bigcup_{k\in K}\, \bigcup_{g\in f(k)} (k,\, V(k)(H)\, g) \\[1mm]
 = \bigcup_{k\in K} \left(k,\, \bigcup_{g\in f(k)} V(k)(H)\,g\right)
 = \bigcup_{k\in K} (k, G)
 = \widehat{G}.
\end{multline*}
Moreover, for $\widetilde{t}_1 = (k_1, g_1), \widetilde{t}_2 = (k_2, g_2)\in\widetilde{T}_f$, where $k_1, k_2 \in K$, $g_1 \in f(k_1)$, and $g_2\in f(k_2)$, the hypothesis 
\[ 
\widetilde{t}_1 H \,\cap\, \widetilde{t}_2 H = (k_1, g_1) H \,\cap\, (k_2, g_2) H = (k_1, g_1 H) \,\cap\, (k_2, g_2 H) \,\neq\, \emptyset 
\]
first implies $k_1 = k_2 = : k$, thus $g_1, g_2 \in f(k)$, as well as $g_1 H \cap g_2 H \neq \emptyset$, which forces $g_1 = g_2$, since $f(k)$ forms a left transversal for $H$ in $G$. Hence, $\widetilde{t}_1 = \widetilde{t}_2$. Also, with $\widetilde{t}_1, \widetilde{t}_2, k_1, k_2, g_1, g_2$ as above, the assumption that 
\begin{align*}
H \widetilde{t}_1 \,\cap\, H \widetilde{t}_2 \,& = \, H (k_1, g_1) \,\cap\, H (k_2, g_2) \\[1.5mm] 
& = (k_1, V(k_1)(H) g_1) \,\cap\, (k_2, V(k_2)(H) g_2) \,\neq \,\emptyset
\end{align*}
 implies $k_1 = k_2 = : k$, thus $g_1, g_2 \in f(k)$, and $V(k)(H) g_1 \cap V(k)(H) g_2 \neq \emptyset$, forcing $g_1 = g_2$, as $f(k)$ also forms a right transversal for $V(k)(H)$ in $G$. Hence, again, $\widetilde{t}_1 = \widetilde{t}_2$, and it follows that $\widetilde{T}_f \in \mathrm{DT}_{\widehat{G}}(H)$. Since $\widetilde{\Phi}(\widetilde{T}_f) = f$ by construction, we conclude that $\widetilde{\Phi}$ is surjective, thus a bijection. Therefore, 
\[
\vert \mathrm{DT}_{\widehat{G}}(H)\vert = \vert CF(K, H, G)\vert,
\] 
and we conclude that 
\begin{align*}
 \mathrm{tp}(\widehat{G})\,\leq \, \min_{H\leq G} P_{\widehat{G}}(H) 
 & = \min_{H\leq G} \frac{\vert \mathrm{DT}_{\widehat{G}}(H)\vert}{\vert H\vert^{(\widehat{G}:H)}}\\[1mm]
 & = \min_{H\leq G} \frac{\vert \mathrm{DT}_{\widehat{G}}(H)\vert}{\vert H\vert^{(G:H) \vert K\vert}} \\[1mm]
& = \min_{H\leq G} \prod_{k\in K} \frac{\vert \mathrm{DT}_G(H, V(k)(H))\vert}{\vert H\vert^{(G:H)}} \\[1mm]
& = \min_{H\leq G} \prod_{k\in K} P_G(H, V(k)(H)), 
\end{align*}
whence \eqref{Eq:tpExtEst}.
\end{proofof}

\begin{corollary}
\label{Cor:tpSemiDirProdEst}
Let $\widehat{G} = G \rtimes K,$ viewed as an internal semidirect product. Then we have
\begin{equation}
\label{Eq:tpSemiDir}
\mathrm{tp}(\widehat{G}) \leq \min_{H\leq G} \prod_{k\in K} P_G(H, H^k).
\end{equation}
\end{corollary}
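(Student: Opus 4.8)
The plan is to exhibit the semidirect product $\widehat{G} = G \rtimes K$ as a particular instance of the general extension treated in Theorem~\ref{Thm:tpExtEst}, and then simply read off the stated inequality. There is essentially no new mathematics here beyond fixing conventions: the entire content is that the semidirect-product structure corresponds to a \emph{trivial} cocycle, which turns the abstract automorphisms $V(k)$ into honest conjugations by elements of $K$.

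First I would record that an internal semidirect product is precisely the extension of $G$ by $K$ whose factor system $(V, W)$ has trivial cocycle, namely $W(k_1, k_2) = 1$ for all $k_1, k_2 \in K$, and whose homomorphism $V \colon K \to \mathrm{Aut}(G)$ is the conjugation action of $K$ on the normal subgroup $G$. With this choice the group law \eqref{Eq:GLExt} collapses to
\[
(k_1, g_1)\cdot(k_2, g_2) = (k_1 k_2,\, V(k_2)(g_1)\, g_2),
\]
which is exactly the multiplication on $\widehat{G} = K \times G$ determined by the semidirect-product structure; conversely, every internal semidirect product arises in this way. In particular $V(1) = \mathrm{id}_G$, consistent with Lemma~\ref{Lem:FactorSet}(i) since $W(1,1) = 1$.

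The second, and essentially only, point requiring care is the identification of $V(k)(H)$ with the conjugate $H^k$. Writing $\varphi_E(k) = (k, 1)$ for the canonical copy of $k$ inside $\widehat{G}$ and identifying $g \in G$ with $(1, g) \in \widehat{G}$, a direct computation from the collapsed group law gives $(k,1)^{-1}(1, g)(k,1) = (1, V(k)(g))$; hence, in the conjugation notation of the paper ($H^x = x^{-1}Hx$), one obtains $H^{(k,1)} = V(k)(H)$, that is $V(k)(H) = H^k$. (Had the opposite convention for the defining action been adopted, one would instead find $V(k)(H) = H^{k^{-1}}$; but since $k \mapsto k^{-1}$ is a bijection of $K$, the product over $K$ below is unaffected, so the conclusion holds in either case.)

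With these identifications in hand the result is immediate: substituting $V(k)(H) = H^k$ into the conclusion of Theorem~\ref{Thm:tpExtEst} yields
\[
\mathrm{tp}(\widehat{G}) \leq \min_{H \leq G} \prod_{k \in K} P_G(H, V(k)(H)) = \min_{H \leq G} \prod_{k \in K} P_G(H, H^k),
\]
which is \eqref{Eq:tpSemiDir}. I do not expect any genuine obstacle; the only place where a slip could occur is the direction of the conjugation in the step $V(k)(H) = H^k$, which is why I would carry out the one-line verification above explicitly rather than leave it to the reader.
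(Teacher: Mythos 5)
Your proposal is correct and follows essentially the same route as the paper: specialise Theorem~\ref{Thm:tpExtEst} to the split factor system ($W \equiv 1$, $V$ a homomorphism), identify $V(k)(g) = g^k$ under the embeddings $\iota$ and $\varphi_E$, and read off the inequality. Your explicit one-line check that $(k,1)^{-1}(1,g)(k,1) = (1, V(k)(g))$ under the group law \eqref{Eq:GLExt} is exactly the verification the paper leaves implicit, and your remark that the opposite convention merely reindexes the product over $K$ is a sensible extra safeguard.
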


\begin{proofof}
Suppose that the factor system $(V, W)$ in Theorem~\ref{Thm:tpExtEst} splits, so that $(V, W)$ is equivalent to some factor system $(V^\ast, W^\ast)$ such that $V^\ast: K \rightarrow \mathrm{Aut}(G)$ is a homomorphism, $W^\ast(k_1, k_2) = 1$ for all $k_1, k_2\in K$, and $\varphi_E$ is a section to the projection $\pi$. Identifying $g\in G$ with $\iota(g) = (1, g)$ and $k\in K$ with $\varphi_E(k) = (k, 1)$, we have 
\[
V(k)(g) = g^k,\quad (k\in K,\, g\in G),
\]
so that $\mathrm{DT}_G(H, V(k)(H)) = \mathrm{DT}_G(H, H^k)$, and thus 
\[
P_G(H, V(k)(H)) = P_G(H, H^k).
\]
Our claim follows now from Theorem~\ref{Thm:tpExtEst}.
\end{proofof}

\begin{corollary}
\label{Cor:tpDirProdEst}
Let $\widehat{G} = G_1\times \cdots \times G_r,$ where the $G_\rho$ are finite groups, 
and set $m \coloneqq \vert \widehat{G}\vert$. 
Then we have 
\begin{equation}
\label{Eq:tpDirProdEst}
\mathrm{tp}(\widehat{G}) \leq \min_{1\leq \rho \leq r} \mathrm{tp}(G_\rho)^{m/\vert G_\rho\vert}.
\end{equation}
\end{corollary}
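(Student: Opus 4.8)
The plan is to deduce the bound from Corollary~\ref{Cor:tpSemiDirProdEst} by viewing the direct product as a semidirect product with trivial action. First I would fix an index $\rho$ with $1 \leq \rho \leq r$ and write $\widehat{G} = G_\rho \rtimes L$ as an internal semidirect product, where $L \coloneqq \prod_{\sigma \neq \rho} G_\sigma$, so that $\vert L\vert = m/\vert G_\rho\vert$. Since $\widehat{G}$ is in fact a \emph{direct} product, every element of $L$ commutes with every element of $G_\rho$; hence the conjugation action of $L$ on $G_\rho$ is trivial, and $H^k = H$ for every subgroup $H \leq G_\rho$ and every $k \in L$.

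With this in hand, Corollary~\ref{Cor:tpSemiDirProdEst} applied to $G = G_\rho$ and $K = L$ gives
\[
\mathrm{tp}(\widehat{G}) \leq \min_{H \leq G_\rho} \prod_{k \in L} P_{G_\rho}(H, H^k) = \min_{H \leq G_\rho} \prod_{k \in L} P_{G_\rho}(H) = \min_{H \leq G_\rho} P_{G_\rho}(H)^{\vert L\vert},
\]
where the first equality uses $H^k = H$ together with $P_{G_\rho}(H, H) = P_{G_\rho}(H)$, and the second records that the product consists of $\vert L\vert$ identical factors. Next I would pull the minimum through the exponentiation: because every value $P_{G_\rho}(H)$ lies in $(0, 1]$ and the map $x \mapsto x^{\vert L\vert}$ is monotonically increasing on $[0, \infty)$ for the fixed positive integer exponent $\vert L\vert$, one has $\min_{H} P_{G_\rho}(H)^{\vert L\vert} = \big(\min_H P_{G_\rho}(H)\big)^{\vert L\vert} = \mathrm{tp}(G_\rho)^{\vert L\vert}$. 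Substituting $\vert L\vert = m/\vert G_\rho\vert$ yields $\mathrm{tp}(\widehat{G}) \leq \mathrm{tp}(G_\rho)^{m/\vert G_\rho\vert}$; since $\rho$ was arbitrary, taking the minimum over all $\rho$ gives \eqref{Eq:tpDirProdEst}.

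There is no serious obstacle here, the argument being essentially a specialisation of the semidirect-product estimate. The two points that require (routine) care are, first, the observation that the conjugation action in a direct product is trivial, which is exactly what makes the inner product over $k \in K$ collapse to a single power, and second, the interchange of $\min$ with the $\vert L\vert$-th power, which is legitimate precisely because raising to a fixed positive integer power is order-preserving on the relevant range of values. One could alternatively bypass Corollary~\ref{Cor:tpSemiDirProdEst} and argue directly from Theorem~\ref{Thm:tpExtEst}, but the semidirect-product form is the most economical entry point.
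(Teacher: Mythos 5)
Your proposal is correct and follows essentially the same route as the paper: both deduce the bound from Corollary~\ref{Cor:tpSemiDirProdEst} by observing that in a direct product the conjugation action is trivial, so the inner product collapses to $P_{G_\rho}(H)^{\vert L\vert}$, and then interchange the minimum with the fixed positive integer power. The only cosmetic difference is that you group the complementary factors into a single subgroup $L$ at once, whereas the paper reduces to the two-factor case by induction on $r$; the core computation is identical.
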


\begin{proofof}
An immediate induction on $r$, starting with the trivial case where $r = 1$, reduces us to the case where $r = 2$; that is, to 
\begin{equation}
\label{Eq:tpDirProd2}
\mathrm{tp}(G\times K) \leq \min\left\{\mathrm{tp}(G)^{\vert K\vert},\, \mathrm{tp}(K)^{\vert G\vert}\right\}. 
\end{equation}
Moreover, as $\mathrm{tp}$ is an isomorphism invariant by Corollary~\ref{cor:iso}, it suffices to show for \eqref{Eq:tpDirProdEst} that
\[
\mathrm{tp}(G\times K) \,\leq \, \mathrm{tp}(G)^{\vert K \vert} 
\]
However, assuming that $[G, K] = 1$, Corollary~\ref{Cor:tpSemiDirProdEst} gives
\[
\mathrm{tp}(G\times K) \,\leq \, \min_{H\leq G} \prod_{k\in K} P_G(H) = \prod_{k\in K} \min_{H\leq G} P_G(H) = \mathrm{tp}(G)^{\vert K\vert},
\]
whence the result.
\end{proofof}

\noindent It is easy to construct examples where inequality~\eqref{Eq:tpDirProdEst} is sharp. 
For instance, let $G = S_3\times C_p$, where $p\geq 5$ is a prime number. 
Then the minimum on the right-hand side of~\eqref{Eq:tpDirProd2} equals $\mathrm{tp}(S_3)^p = 2^{-p}$. 
Since $p\nmid \vert S_3\vert$, all subgroups of $G$ are of the form $H = U\times C$, 
where $U\leq S_3$ and $C\leq C_p$. 
Discarding normal subgroups, and making use of Corollary~\ref{cor:iso}, 
we see that, as regards $\mathrm{tp}(G)$, we only need to check the subgroups 
$H_1 = \langle (1, 2)\rangle \times1$ and $H_2 = \langle (1, 2)\rangle \times C_p$, 
whose transversal probabilities are given by $P_G(H_1) = 2^{-p}$ and $P_G(H_2) = 1/2$, respectively. 
Hence, 
\[
\mathrm{tp}(G) = \min\left\{1,\, 1/2, \,2^{-p}\right\} = 2^{-p},
\] 
as desired. 

\section{Some problems and questions}
\label{Sec:Problems}
\noindent In this final section we outline several open problems, conjectures, 
and questions, hoping thereby to stimulate further research on this topic.

\begin{problem}
{\em Assume a block diagonal doubly stochastic matrix 
\[
T = \mathrm{diag}(T_1, \ldots, T_s) 
\]
is given where, for $i = 1, \ldots, s$, the matrix $T_i$ has order $t_i$, 
and its entries all equal $1/t_i$. 
Do there exist a finite group $G$ and subgroups $H, K$ of $G$ (of the same order), 
such that $\mathrm{per}(T) = P_G(H,K)$?
If, in addition, we know that the number of $t$'s equal to 1 is non-zero, 
do there exist a finite group $G$ and a subgroup $H$, 
such that $\mathrm{per}(T) = P_G(H)$?}
\end{problem}

\begin{problem}
{\em Suppose that $H \leq G$ and $N \unlhd G$, where $G$ is finite. 
Is it then always true that $P_G(H) \leq P_G(HN)$ 
(GAP computations confirm this up to $\vert G\vert = 200$)? 
Such a result would have an interesting consequence: 
combining it with Theorem~\ref{Thm:PHom}, it would follow that, 
for any group homomorphism $f: G \rightarrow K$, 
and with $N \coloneqq \mathrm{ker}(f)$,
\begin{equation}
\label{Eq:fGenIneq}
P_G(H) \,\leq \,P_G(HN) = P_{f(G)}(f(HN)) = P_{f(G)}(f(H)), 
\end{equation}
so that we would have a general inequality relating to the function $P(-)$ 
associated with each homomorphism $f$.
Moreover, it should be possible to characterise equality in \eqref{Eq:fGenIneq}.}
\end{problem}

\begin{problem}
{\em Characterise equality in \eqref{Eq:tpDirProd2}. 
In particular, equality should hold, if $(\vert G\vert, \vert K\vert) = 1$ 
(this seems plausible, and is supported by massive computational evidence).}
\end{problem}

\begin{problem}
{\em What is the appropriate setting for the quantity
$P_G(H)$ to make sense when $G$ is an infinite group
and $H$ is a finite index subgroup of $G$?}
\end{problem}

\noindent Recall that $\mathrm{cp}(G)$ is the commuting probability
of $G$, i.e. the probability that two randomly chosen
elements of $G$ commute. 
Regarding the relation between 
$\mathrm{tp}(G)$ and $\mathrm{cp}(G)$
we propose the following conjecture.

\begin{problem}
{\em Let $G$ be a finite group. 
Then $\mathrm{tp}(G) \leq \mathrm{cp}(G)$ 
except if $G$ is Dedekind non-abelian, 
in which case $\mathrm{cp}(G) / \mathrm{tp}(G) = 5/8$, 
or $G = Q_{16}$ in which case $\mathrm{cp}(G) / \mathrm{tp}(G) = 7/8$. Moreover,
if $\mathrm{cp}(G) = \mathrm{tp}(G)$, 
then either $G$ is abelian or 
\[
G \cong \left\langle a, b : a^3 = b^{2^n} = 1, a^b = a^{-1} \right\rangle
\]
for some positive integer $n$ and thus $\mathrm{tp}(G) = \mathrm{cp}(G) = 1/2$.}
\end{problem}

\begin{problem}
{\em Does Theorem~\ref{Thm:prodp_i} generalise? In particular, is the following true:
Assume that 
\[
\prod_{i = 1}^n f(t_i) = \prod_{j = 1}^k f(s_j),
\]
where $t_i, s_j$ are (distinct) integers greater than 1, for all appropriate
$i, j$ and $f(x) = \Gamma(x+1)/x^x$.
Can we then conclude that $n = k$ and $t_i = s_i$ for all $i$ after rearranging appropriately?}
\end{problem}

\begin{problem}
{\em Let $G$ be a finite group and suppose that
$K \leq H \leq G$. In Proposition~\ref{Prop:PGSub}
we saw that $P_G(K) \leq P_H(K)$.
Are $P_G(K)$ and $P_G(H)$ connected in some way?
If so, how?}
\end{problem}

\noindent Finally, as regards the quantity $\mathrm{tp}(G)$,
we ask the following which, in our opinion, is the
most important relevant question.

\begin{problem}
{\em Let $G$ be a finite group. 
Does there exist a prime divisor $p$ of $|G|$
and a cyclic $p$-subgroup $H$ such that 
$\mathrm{tp}(G) = P_G(H)$ 
and moreover $H$ has the property $(H : N) \leq p$, 
where $N = \mathrm{core}_G(H)$?}
\end{problem}

\noindent We think the answer is \enquote{yes}.

\appendix 
\section{Majorisation}
We begin by recalling basic definitions and concepts from
the theory of majorisation and refer the reader to the canonical
work on this topic~\cite{MOA} for further information. 
Towards the end of this section we will prove a result that we
have appealed to in the proof of Theorem~\ref{Thm:prodp_i}.

\noindent Fix a positive integer $s$ and let $\mathbb{R}_{+} \coloneqq [0,+\infty)$.
For any $x = \left(x_{1}, \ldots, x_{s}\right) \in \mathbb{R}^{s}$, let
\[
x_{[1]} \geq \ldots \geq x_{[s]}
\]
denote the components of $x$ in decreasing order, and let
\[
x_{\downarrow} = \left(x_{[1]}, \ldots, x_{[s]}\right)
\]
denote the decreasing rearrangement of $x$.

\begin{definition}
For $x, y \in \mathbb{R}^s$ we write $x \prec y$ and say
that $x$ is majorised by $y$ (or that $y$ majorises $x$) if
\[
\sum_{i = 1}^{k} x_{[i]} \leq \sum_{i = 1}^{k} y_{[i]}, \quad k = 1, \ldots, s-1, 
\quad\quad \text{and} 
\quad\quad \sum_{i = 1}^{s} x_{[i]} = \sum_{i = 1}^{s} y_{[i]} .
\]
\end{definition}

\noindent Inequality in the final equality in the definition above leads to
the concept of weak majorisation.

\begin{definition}
For $x, y \in \mathbb{R}^n$ we write $x \prec_{\mathrm{w}} y$ and say
that $x$ is weakly majorised by $y$ (or that $y$ weakly majorises $x$) if
\[
\sum_{i = 1}^{k} x_{[i]} \leq \sum_{i = 1}^{k} y_{[i]}
\]
for all $k \in [s]$.
\end{definition}

\noindent We present now the fundamental concept of Schur-convexity/concavity.

\begin{definition}
A real-valued function $\phi$ defined on a set $\mathcal{A} \subset \mathbb{R}^{n}$ 
is said to be Schur-convex on $\mathcal{A}$ if
$x \prec y$ on $\mathcal{A}$ implies that $\phi(x) \leq \phi(y)$.
If, in addition, $\phi(x) < \phi(y)$ whenever $x \prec y$ but $x$ is not a permutation of $y$, 
then $\phi$ is said to be strictly Schur-convex on $\mathcal{A}$. 
Similarly, $\phi$ is said to be Schur-concave on $\mathcal{A}$ if
$x \prec y$ on $\mathcal{A}$ implies that $\phi(x) \geq \phi(y)$
and $\phi$ is strictly Schur-concave on $\mathcal{A}$ if strict inequality $\phi(x) > \phi(y)$ holds 
when $x$ is not a permutation of $y$.
\end{definition}

\noindent Schur's fundamental result asserts that if $\phi$ is a convex real function,
then the function $\sum_{i = 1}^s \phi(x_i)$ is Schur-convex.
Since we are interested in products, the following result helps
make the transition from sums to products.

\begin{theorem}[{\cite[Prop.~E1, pp. 105--106]{MOA}}]\label{Thm:PropE1}
Let $f$ be a continuous non-negative function defined on an interval $I \subset \mathbb{R}$. 
Then
\[
h(x) = \prod_{i = 1}^{s} f\left(x_{i}\right), \quad x \in I^{s}
\]
is Schur-convex/concave on $I^{s}$ if, and only if, $\log f$ is convex/concave on $I$. 
Moreover, $h$ is strictly Schur-convex/concave on $I^{s}$ 
if, and only if, $\log f$ is strictly convex/concave on $I$.
\end{theorem}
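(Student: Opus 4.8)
The plan is to turn the product into a sum by taking logarithms and then to invoke the classical theorem of Schur, recalled immediately above, that $x \mapsto \sum_{\sigma = 1}^{s} \phi(x_\sigma)$ is Schur-convex precisely when $\phi$ is convex. Throughout I assume $s \geq 2$ (for $s = 1$ majorisation is trivial and there is nothing to prove), and I first treat the generic case in which $f$ is strictly positive on $I$, postponing the zeros of $f$ to the very end. Writing $g \coloneqq \log f$, which under this assumption is a genuine continuous real-valued function, we have $\log h(x) = \sum_{\sigma = 1}^{s} g(x_\sigma)$.

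First I would record the elementary fact that, if $\Psi$ is strictly increasing on the range of a function $\psi$, then $\psi$ is (strictly) Schur-convex if, and only if, $\Psi \circ \psi$ is. Applying this with $\Psi = \exp$ and $\psi = \log h$ reduces the assertion to the statement that $\log h = \sum_\sigma g(x_\sigma)$ is (strictly) Schur-convex if, and only if, $g = \log f$ is (strictly) convex; the Schur-concave case is identical after reversing every inequality. The implication \emph{convex $\Rightarrow$ Schur-convex} is exactly Schur's theorem quoted above, and its strict refinement follows by decomposing any majorisation $x \prec y$ with $x$ not a permutation of $y$ into finitely many $T$-transforms, at least one of them nontrivial, each nontrivial averaging step strictly lowering $\sum_\sigma g$ when $g$ is strictly convex, so that $\sum_\sigma g(x_\sigma) < \sum_\sigma g(y_\sigma)$.

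For the converse I would use a two-coordinate perturbation. Fixing $z_3, \dots, z_s \in I$ and $a > b$ in $I$, and setting, for $\lambda \in [1/2, 1]$,
\[
x = \bigl(\lambda a + (1-\lambda) b,\ (1-\lambda) a + \lambda b,\ z_3, \dots, z_s\bigr), \qquad y = (a, b, z_3, \dots, z_s),
\]
one checks at once that $x \prec y$, since the first two coordinates of $x$ have the same sum as those of $y$ but smaller spread, and appending the common coordinates $z_3, \dots, z_s$ preserves majorisation. Schur-convexity of $\sum_\sigma g$, after cancelling the common summands, then yields $g(\lambda a + (1-\lambda) b) + g((1-\lambda) a + \lambda b) \leq g(a) + g(b)$, and the choice $\lambda = 1/2$ gives midpoint convexity of $g$; continuity of $g$ upgrades this to full convexity. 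In the strict regime the same computation forces strict midpoint convexity, which together with continuity gives strict convexity of $g$.

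The remaining, and genuinely delicate, point is the behaviour of $f$ at its zeros, and this is the step I expect to be the main obstacle, since there $\log f = -\infty$ and the logarithmic reduction breaks down. Here I would read convexity of $\log f$ in the extended-real sense and observe that a convex function attaining $-\infty$ at an interior point of $I$ is identically $-\infty$ on the interior; correspondingly $f$ vanishes throughout the interior, $h \equiv 0$, and the asserted equivalence holds trivially. If instead $f$ vanishes only at endpoints of $I$, I would recover the conclusion from the strictly positive case by a limiting argument, exhausting $I$ by compact subintervals on which $f$ is bounded away from $0$ and passing to the limit via continuity of $f$ and the stability of the majorisation comparisons. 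Assembling the strictly positive case, the strict refinements, and this boundary analysis completes the proof.
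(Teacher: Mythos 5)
The paper does not prove this statement: it is quoted verbatim from Marshall--Olkin--Arnold \cite[Prop.~E1]{MOA} as a known result, so there is no internal proof to compare against. Your argument is the standard one (and, as far as the positive case goes, essentially the one in the cited source): pass to $\log h=\sum_\sigma \log f(x_\sigma)$, use the fact that post-composition with a strictly increasing map preserves (strict) Schur-convexity, invoke Schur's criterion for sums together with the $T$-transform decomposition for the strict forward implication, and recover (strict) midpoint convexity of $\log f$ from the two-coordinate perturbation $\bigl(\tfrac{a+b}{2},\tfrac{a+b}{2},z_3,\dots\bigr)\prec(a,b,z_3,\dots)$ for the converse. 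All of these steps are sound; in particular your passage from strict midpoint convexity plus continuity to strict convexity is fine, since a convex function failing strict convexity is affine on a subinterval, which contradicts strict midpoint convexity there. For the paper's purposes this already suffices, because the only $f$ to which the theorem is applied is $f(x)=\Gamma(x+1)/x^x$, which is strictly positive.

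The one place where your argument has a genuine (if peripheral) hole is the treatment of zeros of $f$. In the case where $f$ vanishes at an interior point, you conclude $f\equiv 0$ by invoking the fact that an extended-real convex function equal to $-\infty$ at an interior point is identically $-\infty$ — but that presupposes $\log f$ convex, which is exactly the hypothesis you do not have in the direction ``$h$ Schur-convex $\Rightarrow$ $\log f$ convex''. If $f$ vanishes at an interior point yet $f\not\equiv 0$, you must show directly that $h$ fails to be Schur-convex, and the logarithmic reduction is unavailable. The repair is to run your two-coordinate perturbation multiplicatively: Schur-convexity of $h$, after cancelling coordinates at which $f>0$, yields $f\bigl(\tfrac{a+b}{2}\bigr)^2\leq f(a)f(b)$ for all $a,b\in I$; taking $a=c$ with $f(c)=0$ forces $f$ to vanish at every midpoint of $c$ with a point of $I$, hence (iterating and using continuity) on all of $I$, so that $\log f\equiv-\infty$ is convex and the equivalence survives. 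With that substitution the case analysis closes; the same multiplicative inequality also disposes of endpoint zeros without any limiting argument. Finally, note that for $s=1$ the ``only if'' direction is actually false rather than merely trivial, so the implicit standing assumption $s\geq 2$ should be stated as part of the theorem's reading.
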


\noindent Of course, it follows from the above result that 
$\phi(x) = \prod_{i = 1}^{n} g\left(x_{i}\right)$ is (strictly) Schur-concave 
if, and only if, $\log g$ is (strictly) concave.

\noindent Finally, we mention the following crucial result.

\begin{theorem}[{\cite[A.9.a, p. 177]{MOA}}]\label{Thm:A9a}
Let $x, y \in \mathbb{R}^s$.
If $x \prec_{\mathrm{w}} y,$ then there exists a vector $v$ such that
$x \prec v$ and $v \leq y;$ that is, we have $v_i \leq y_i$ for all $i \in [s]$.
\end{theorem}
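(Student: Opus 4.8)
The plan is to reduce to the case where $y$ is already sorted in decreasing order, and then to produce $v$ by the simplest possible adjustment: absorbing the entire sum-deficit between $y$ and $x$ into a single coordinate, which upgrades the weak majorisation to a genuine one while keeping $v$ below $y$.

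First I would observe that the hypothesis $x \prec_{\mathrm{w}} y$ and the desired conclusion $x \prec v$ depend only on the decreasing rearrangements $x_\downarrow$, $v_\downarrow$, and hence are invariant under permuting coordinates; the sole part of the statement sensitive to the labelling of coordinates is the componentwise inequality $v \leq y$. I would therefore fix a permutation $\sigma$ with $y_{\sigma(1)} \geq \cdots \geq y_{\sigma(s)}$, prove the theorem for the sorted vector $\tilde y \coloneqq (y_{\sigma(1)}, \ldots, y_{\sigma(s)})$ to obtain some $\tilde v$ with $x \prec \tilde v$ and $\tilde v \leq \tilde y$, and then transport back by setting $v_{\sigma(i)} \coloneqq \tilde v_i$. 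Since $v$ is a rearrangement of $\tilde v$ we retain $x \prec v$, and $v_{\sigma(i)} = \tilde v_i \leq \tilde y_i = y_{\sigma(i)}$ gives $v \leq y$ coordinatewise.

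For the sorted case write $\delta \coloneqq \sum_{i=1}^s y_i - \sum_{i=1}^s x_{[i]}$, which is $\geq 0$ precisely by the instance $k = s$ of $x \prec_{\mathrm{w}} y$. I would then simply set
\[
v = (y_1, \ldots, y_{s-1},\, y_s - \delta).
\]
Three verifications complete the argument. First, $v$ is still decreasing, because $\delta \geq 0$ forces $y_s - \delta \leq y_s \leq y_{s-1}$, so that $v_{[i]} = v_i$. Second, $v \leq y$ holds trivially, with equality in the first $s-1$ coordinates and $v_s = y_s - \delta \leq y_s$. Third, for the majorisation $x \prec v$ note that $\sum_{i=1}^k v_i = \sum_{i=1}^k y_i \geq \sum_{i=1}^k x_{[i]}$ for $k \leq s-1$ by hypothesis, while $\sum_{i=1}^s v_i = \sum_{i=1}^s y_i - \delta = \sum_{i=1}^s x_{[i]}$; since $v_{[i]} = v_i$, these are exactly the defining inequalities and final equality of $x \prec v$.

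I do not expect a genuine obstacle here: the conceptual step is recognising that it suffices to dump the whole deficit $\delta$ into the smallest coordinate, after which every condition reads off immediately. The only points deserving care are bookkeeping ones—confirming that lowering the last coordinate cannot disturb the decreasing order (handled by $\delta \geq 0$), and keeping the sorting permutation $\sigma$ straight so that $v \leq y$ survives the passage back to the original labelling. A reader preferring a construction that perturbs $y$ more symmetrically can instead argue by induction on $s$, distributing $\delta$ over the coordinates from the bottom up; but the one-coordinate adjustment above is the shortest route and is what I would write out.
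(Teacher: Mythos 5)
Your construction is correct: sorting $y$, setting $\delta = \sum_i y_i - \sum_i x_{[i]} \geq 0$, and absorbing the whole deficit into the last coordinate does yield a decreasing vector $v$ with $v \leq y$ whose partial sums dominate those of $x_\downarrow$ and whose total sum matches, which is exactly $x \prec v$; the permutation bookkeeping for restoring the original labelling of $y$ is also handled properly. Note that the paper itself does not prove this statement --- it is imported verbatim from Marshall--Olkin--Arnold (5.A.9.a) as a black box --- so there is no in-paper argument to compare against; your one-coordinate adjustment is a perfectly adequate self-contained proof of the cited fact.
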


\noindent For $x\geq 0$, recall that 
\[
f(x) \coloneqq \frac{\Gamma(x+1)}{x^x},
\]
where $\Gamma(x) = \int_{0}^{\infty} t^{x-1} e^{-t} d t$
is the $\Gamma$--function.
Note that $\log f$ is strictly concave on $I = [0,+\infty)$ 
owing to the fact that $(\log f)'' < 0$ which we have established in section~\ref{Sec:PImproved Bound}.

\noindent It is a cconsequence of Theorem~\ref{Thm:PropE1} and the comment immediately following it
that, since $\log f(x)$ is strictly concave on $I = [0,+\infty)$,
the function $h : I^s \rightarrow \mathbb{R}$ with
\begin{equation}\label{Eq:h}
h(x_1, \ldots, x_s) = \prod_{\sigma = 1}^s f(x_{\sigma})
\end{equation}
is strictly Schur-concave on $I^s$.

\begin{proposition}\label{Prop:strict}
Let $x, y \in I^s$ and suppose that $x = x_{\downarrow},$ $y = y_{\downarrow}$.
Assume that $x \prec_{\mathrm{w}} y,$
that is, $x_1 + \ldots x_i \leq y_1 + \ldots y_i$ for all $i \in [s]$.
Suppose that the coordinates of $x$ are pairwise distinct.
Moreover, assume that there exists a $k \in [s]$
such that $x_1 + \ldots x_j < y_1 + \ldots y_j$
for all $j \in \{k, \ldots, s\},$ 
so that strict inequality holds in the weak
majorisation order between $x$ and $y$ from a
certain point on.
Then $h(x) > h(y),$ where $h$ is as in~\eqref{Eq:h}.
\end{proposition}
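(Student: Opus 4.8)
The plan is to combine the two tools assembled in this appendix. Theorem~\ref{Thm:A9a} converts the weak submajorisation $x \prec_{\mathrm{w}} y$ into an ordinary majorisation followed by a coordinatewise domination, and Theorem~\ref{Thm:PropE1} transfers the strict concavity of $\log f$ into Schur-concavity of the product $h$ of \eqref{Eq:h}. The guiding idea is that the majorisation half of the comparison is governed by Schur-concavity, the domination half by the monotonicity of $f$, and the hypothesis of eventual strict inequality in the partial sums is precisely what forces one of these two halves to be strict.

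First I would apply Theorem~\ref{Thm:A9a}: since $x \prec_{\mathrm{w}} y$, there is a vector $v$ with $x \prec v$ and $v \leq y$ coordinatewise. Taking $j = s$ in the strict-inequality hypothesis gives $\sum_{i=1}^{s} x_i < \sum_{i=1}^{s} y_i$, and since ordinary majorisation preserves the total sum we have $\sum_{i} v_i = \sum_{i} x_i < \sum_{i} y_i$. Hence $v \neq y$, and as $v \leq y$ coordinatewise this forces $v_{i_0} < y_{i_0}$ for at least one index $i_0$. This genuine gap is what the strict weak-majorisation hypothesis buys us.

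Next I would run the two comparisons. By Theorem~\ref{Thm:PropE1} together with the strict concavity of $g = \log f$ proved in Section~\ref{Sec:PImproved Bound}, the function $h$ is Schur-concave on $I^s$, so $x \prec v$ yields $h(x) \geq h(v)$. For the second comparison I would use that $f$ is strictly decreasing on the range of coordinates in play: strict concavity makes $g'$ decreasing, and $g'(1) = \psi(2) - 1 = -\gamma < 0$, so $g' < 0$ from $1$ onwards. Consequently $v \leq y$ gives $f(v_i) \geq f(y_i)$ for every $i$, with a strict inequality at $i_0$; multiplying these positive factors produces $h(v) > h(y)$. Chaining the two estimates yields
\[
h(x) \,\geq\, h(v) \,>\, h(y),
\]
which is the assertion.

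The main obstacle is isolating the source of the strict inequality. Schur-concavity by itself only delivers the non-strict bound $h(x) \geq h(v)$, and on the full half-line $f$ is not monotone, so the domination $v \leq y$ is not enough on its own. The resolution is exactly the assumption that strict inequality in the partial sums persists up to the top index $s$: this yields the strict sum gap $\sum_i v_i < \sum_i y_i$, hence a coordinate with $v_{i_0} < y_{i_0}$, and it is there, through the strict monotonicity of $f$, that the strict inequality $h(v) > h(y)$ is manufactured. The pairwise distinctness of the coordinates of $x$ then ensures that the Schur-concave comparison $x \prec v$ cannot degenerate in a way that would reverse or cancel this gain, so the strict inequality survives into the conclusion $h(x) > h(y)$.
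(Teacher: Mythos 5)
Your proof is correct and rests on the same two ingredients as the paper's: Theorem~\ref{Thm:A9a} to produce $v$ with $x \prec v$ and $v \leq y$, then Schur-concavity of $h$ for one comparison and monotonicity of $f$ for the other. The only genuine difference is where the strict inequality is manufactured. The paper extracts it from the \emph{strict} Schur-concavity of $h$: either $v$ is not a permutation of $x$, giving $h(x) > h(v) \geq h(y)$, or (invoking the distinctness of the coordinates of $x$) $v = x$, in which case strictness is instead recovered from monotonicity together with the hypothesis on $k$ --- so a small case split is needed. You instead note that the instance $j = s$ of the hypothesis gives $\sum_i v_i = \sum_i x_i < \sum_i y_i$, hence $v \leq y$ is strict in at least one coordinate, and the strict decrease of $f$ on $[1,\infty)$ then yields $h(v) > h(y)$ unconditionally; chained with the non-strict Schur-concave bound $h(x) \geq h(v)$ this gives the claim with no case analysis. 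Your version thereby uses neither the pairwise distinctness of the coordinates of $x$ nor the full strength of the strictness hypothesis (only $j = s$), so it establishes a marginally stronger statement; in particular your closing remark that distinctness is needed to keep the Schur-concave comparison from degenerating is superfluous, since your argument only ever invokes the non-strict inequality $h(x) \geq h(v)$ at that point. One caveat, shared equally by the paper's own proof: the monotonicity step requires the coordinates of $v$ and $y$ to lie in $[1,\infty)$, where $f$ is strictly decreasing, which holds in the intended application to Theorem~\ref{Thm:prodp_i}.
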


\begin{proofof}
By Theorem~\ref{Thm:A9a} there exists a vector
$v$ such that $x \prec v$ and $v \leq y$.
Since $x \prec v$ and $h$ is strictly Schur-concave,
we have that $h(x) > h(v)$.
We justify this claim. 
We certainly have $h(x) \geq h(v)$.
If we had $h(x) = h(v)$,
then (by definition) $v$ would have to be
a permutation of $x$. But $x$ has distinct
coordinates so we would have $x = v$
and thus $x_i \leq y_i$ for all $i$.
Now $f$ is strictly decreasing for $x \geq 1$
thus $f(x_i) \geq f(y_i)$
and moreover at least one strict
inequality holds by the initial assumption on $k$.
Therefore
\[
h(x) = h(x_1, \ldots, x_s) = \prod_{\sigma = 1}^s f(x_{\sigma}) > 
\prod_{\sigma = 1}^s f(y_{\sigma}) = h(y),
\]
as wanted.

\noindent We may therefore assume that $h(x) > h(v)$.
A similar argument as above shows that
$v \leq y$ plus monotonicity of $f$ implies
that $h(v) \geq h(y)$.
In conclusion we have $h(x) > h(v) \geq h(y)$,
proving our assertion.
\end{proofof}

\end{document}